\newtheorem{theorem}{Theorem}[section]
\newtheorem{lemma}[theorem]{Lemma}
\newtheorem{corollary}[theorem]{Corollary}
\newtheorem{conjecture}[theorem]{Conjecture}
\theoremstyle{definition}
\newtheorem{definition}[theorem]{Definition}
\newtheorem{example}[theorem]{Example}
\newlength{\Oldarrayrulewidth}
\newcommand{\N}{\mathbb{N}}
\newcommand{\Z}{\mathbb{Z}}
\newcommand{\e}{\textup{\textbf{e}}}
\renewcommand{\i}{\textup{\textbf{i}}}
\renewcommand{\j}{\textup{\textbf{j}}}
\newcommand{\n}{\textup{\textbf{n}}}
\renewcommand{\r}{\textup{\textbf{r}}}
\newcommand{\x}{\textup{\textbf{x}}}
\newcommand{\bz}{\textup{\textbf{0}}}
\renewcommand{\mod}[2]{\equiv#1\textup{ (mod }#2\textup{)}}
\def\m@th{\mathsurround=0pt}
\def\sm#1{\null\,\vcenter{\baselineskip9pt\lineskip.23ex\m@th
    \ialign{\hfil$\scriptstyle##$\hfil&&\ \hfil$\scriptstyle##$\hfil\crcr
    \mathstrut\crcr\noalign{\kern-\baselineskip}
    #1\crcr\mathstrut\crcr\noalign{\kern-\baselineskip}}}\,}
\def\smnp#1{\null\,\vcenter{\baselineskip9pt\lineskip.23ex\m@th
    \ialign{\hfil$\scriptstyle##$\hfil&&\ \ \hfil$\scriptstyle##$\hfil\crcr
    \mathstrut\crcr\noalign{\kern-\baselineskip}
    #1\crcr\mathstrut\crcr\noalign{\kern-\baselineskip}}}\,}
\begin{document}

\title{Sum Index and Difference Index of Graphs}

\author[1]{Joshua Harrington\thanks{joshua.harrington@cedarcrest.edu}}
\affil[1]{Department of Mathematics, Cedar Crest College}

\author[2]{Eugene Henninger-Voss\thanks{eugene.henninger\_voss@tufts.edu}}
\affil[2]{Department of Mathematics, Tufts University}

\author[3]{Kedar Karhadkar\thanks{kjk5800@psu.edu}}
\affil[3]{Department of Mathematics, The Pennsylvania State University}

\author[4]{Emily Robinson\thanks{robin28e@mtholyoke.edu}}
\affil[4]{Department of Mathematics and Statistics, Mount Holyoke College}

\author[5]{Tony W.\ H.\ Wong\thanks{wong@kutztown.edu}}
\affil[5]{Department of Mathematics, Kutztown University of Pennsylvania}

\date{\today}

\maketitle

\begin{abstract}
Let $G$ be a nonempty simple graph with a vertex set $V(G)$ and an edge set $E(G)$. For every injective vertex labeling $f:V(G)\to\mathbb{Z}$, there are two induced edge labelings, namely $f^+:E(G)\to\Z$ defined by $f^+(uv)=f(u)+f(v)$, and $f^-:E(G)\to\Z$ defined by $f^-(uv)=|f(u)-f(v)|$. The sum index and the difference index are the minimum cardinalities of the ranges of $f^+$ and $f^-$, respectively. We provide upper and lower bounds on the sum index and difference index, and determine the sum index and difference index of various families of graphs. We also provide an interesting conjecture relating the sum index and the difference index of graphs.\\
\textit{Keywords}: Graph labeling; Sum index; Difference index.\\
\textit{MSC}: 05C78, 05C05.
\end{abstract}

\section{Introduction}\label{sec:intro}

Throughout this paper, let $G$ denote a nonempty simple graph with vertex set $V(G)$ and edge set $E(G)$.  A \emph{vertex labeling} of $G$ is an injective map $f:V(G)\to\Z$. Let $f^+:E(G)\to\Z$ be the induced \emph{edge labeling} defined by $f^+(uv)=f(u)+f(v)$ for each edge $uv\in E(G)$. Note that an edge labeling function is not necessarily injective. For every edge labeling $g:E(G)\to\Z$, define $|g|=|g(E(G))|$ as the cardinality of the range of $g$, which counts the number of distinct edge labels assigned by $g$.

The notion of inducing edge labelings by summing the labels of the incident vertices has been studied in different contexts. For example, this notion was used by Harary \cite{harary} to introduce sum labelings and sum graphs. It was also used by Ponraj and Parthipan \cite{pp} to introduce pair sum labelings and pair sum graphs. More recently, Harrington and Wong \cite{hw} used this notion to introduce the following definition of the sum index of $G$.

\begin{definition}\label{def:sumindex}
The \emph{sum index} of $G$, denoted by $s(G)$, is the minimum positive integer $k$ such that there exists a vertex labeling $f$ of $G$ with $|f^+|=k$. A vertex labeling $f$ such that $|f^+| = s(G)$ is referred to as a \emph{sum index labeling} of $G$.
\end{definition}

Harrington and Wong proved that $s(G)\geq\Delta(G)$, where $\Delta(G)$ denotes the maximum degree of $G$. They also showed that if $n\geq2$, then $s(K_n)=2n-3$, thus $\Delta(G)\leq s(G)\leq 2n-3$ for any graph $G$ with $n$ vertices.  Furthermore, they determined that $s(K_{n,m})=n+m-1$ for complete bipartite graphs $K_{n,m}$ and $s(G)=\Delta(G)$ if $G$ is a caterpillar graph. Lastly, they studied the sum index of trees and showed that if the diameter of a tree $T$ is at most $5$, then $s(T)=\Delta(T)$, but they proved that this equality does not hold for all trees in general.

In this article, we slightly improve the lower bound of the sum index by using the chromatic index $\chi'(G)$ and provide several upper bounds in Subsection~\ref{subsec:sumbounds}. In Subsection~\ref{subsec:sumfamilies}, we determine the sum index of graphs in the following families: cycles, spiders, wheels, and $d$-dimensional rectangular grids.  Further, we show that $s(G)-\chi'(G)$ can be arbitrarily large, as exhibited by several families of graphs, such as trees and triangular grids.  We construct graphs with a prescribed sum index in Subsection~\ref{subsec:construct}, and end our Section~\ref{sec:sumindex} by analyzing some underlying structures of trees with a fixed upper bound on its sum index.

Closely resembling the definition of sum index, we define the difference index as follows.

\begin{definition}\label{def:diffindex}
Let $f:V(G)\to\Z$ be a vertex labeling of $G$, and let $f^-:E(G)\to\Z$ be the induced edge labeling defined by $f^-(uv)=|f(u)-f(v)|$ for each edge $uv\in E(G)$. The \emph{difference index} of $G$, denoted by $d(G)$, is the minimum positive integer $k$ such that there exists a vertex labeling $f$ of $G$ with $|f^-|=k$. A vertex labeling $f$ such that $|f^-|=d(G)$ is referred to as a \emph{difference index labeling} of $G$.
\end{definition}

As an analogue to Section~\ref{sec:sumindex}, we provide several bounds on the difference index, determine the difference index of various families of graphs, and analyze trees with a fixed upper bound on its difference index in Section~\ref{sec:diffindex}. We conclude our paper with two conjectures. In particular, we conjecture that the difference index is half of the sum index for all nonempty simple graphs.

\section{Sum index}\label{sec:sumindex}

We begin our study of sum index by providing upper and lower bounds for $s(G).$

\subsection{Bounds on the sum index}\label{subsec:sumbounds}

As mentioned in the introduction, the maximum degree $\Delta(G)$ is a lower bound of $s(G)$. The following theorem slightly improves the lower bound of $s(G)$ by using $\chi'(G)$, the chromatic index of $G$.

\begin{theorem}\label{thm:chromindex}
The sum index is greater than or equal to the chromatic index, i.e., $s(G)\geq\chi'(G)$.
\end{theorem}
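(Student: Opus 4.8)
The plan is to show that any sum index labeling $f$ gives rise to a proper edge coloring of $G$ using exactly $|f^+| = s(G)$ colors, which by definition of the chromatic index forces $\chi'(G) \leq s(G)$. The natural candidate is to color each edge $uv$ by its induced label $f^+(uv) = f(u)+f(v)$; the number of colors used is then precisely $|f^+|$. So the entire burden is to verify that this coloring is proper, i.e., that no two edges sharing a vertex receive the same label.

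First I would fix a sum index labeling $f$, so that $|f^+| = s(G)$, and consider two distinct edges $uv$ and $uw$ that share the common vertex $u$. To check that the coloring by $f^+$ is proper, I would suppose for contradiction that $f^+(uv) = f^+(uw)$, which means $f(u)+f(v) = f(u)+f(w)$. Cancelling $f(u)$ from both sides yields $f(v) = f(w)$, and since $v$ and $w$ are distinct vertices (as $uv$ and $uw$ are distinct edges sharing $u$), this contradicts the injectivity of the vertex labeling $f$. Hence any two adjacent edges receive distinct labels, so $f^+$ is a proper edge coloring.

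Having established that $f^+$ is a proper edge coloring using $|f^+| = s(G)$ colors, I would conclude that $G$ admits a proper edge coloring with $s(G)$ colors, and therefore $\chi'(G) \leq s(G)$ by definition of the chromatic index as the minimum number of colors in any proper edge coloring. This is exactly the claimed inequality.

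I do not anticipate a genuine obstacle here: the argument is short and rests entirely on the cancellation step that the \emph{sum} of labels, restricted to a common vertex, determines the other endpoint's label. The only point worth flagging is that this works because $f$ is injective, which is built into the definition of a vertex labeling. It is worth noting that the analogous statement for the difference index would run identically, since $|f(u)-f(v)| = |f(u)-f(w)|$ together with injectivity also forces $f(v) = f(w)$ when both endpoints lie on the same side of $f(u)$ — though one must be slightly more careful there, which perhaps foreshadows why the difference index deserves separate treatment.
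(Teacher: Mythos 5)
Your proposal is correct and follows exactly the paper's argument: color each edge $uv$ by $f^+(uv)$ for a sum index labeling $f$, use the cancellation $f(u)+f(v)=f(u)+f(w)\Rightarrow f(v)=f(w)$ together with injectivity of $f$ to show the coloring is proper, and conclude $\chi'(G)\leq|f^+|=s(G)$. No differences worth noting.
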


\begin{proof}
Let $f$ be a sum index labeling of $G$. Since $f$ is injective, we may view $f^+$ as a proper edge coloring of $G$. Indeed, if two incident edges $uv$ and $uw$ share the same edge label, i.e., $f^+(uv) = f^+(uw)$, then $f(u)+f(v)=f(u)+f(w)$, so $f(v)=f(w)$. This contradicts the injectivity of $f$. Thus, $f$ induces a proper edge coloring on $G$ with $|f^+|$ colors, so $s(G)=|f^+|\geq\chi'(G)$.
\end{proof}

Before we provide upper bounds for $s(G)$, we first introduce the definitions of sum labeling and exclusive sum labeling. Here, $\overline{K_k}$ denotes the complement of the complete graph $K_k$, which is the graph of $k$ isolated vertices.

\begin{definition}
A \textit{sum labeling} of a graph $G$ is an injective map $f: V(G) \rightarrow \mathbb{N}$ such that two vertices $v, w \in V(G)$ are adjacent if and only if $f(v) + f(w) = f(u)$ for some vertex $u \in V(G)$. If $G$ admits a sum labeling, then $G$ is a \textit{sum graph}. The \textit{sum number} $\sigma(G)$ is the minimum nonnegative integer $k$ such that $G\cup\overline{K_k}$ is a sum graph.
\end{definition}

\begin{definition}
Let $k$ be a positive integer. A \textit{$k$-exclusive sum labeling} (abbreviated $k$-ESL) of a graph $G$ is an injective map $f: V(G \cup \overline{K_k}) \to \mathbb{N}$ such that two vertices $v,w\in V(G)$ are adjacent if and only if $f(v)+f(w)=f(u)$ for some vertex $u\in V(\overline{K_k})$. The \textit{exclusive sum number} $\epsilon(G)$ is the minimum $k$ such that $G$ admits a $k$-ESL.
\end{definition}

The sum number of $G$ was introduced by Harary \cite{harary} and the exclusive sum number of $G$ was introduced by Miller et.\ al.\ \cite{mprsst}. With these two definitions in mind, the following theorems give two upper bounds of the sum index $s(G)$.

\begin{theorem}
Let $u$ be a vertex of $G$, and let $G_u$ be the induced subgraph $G \setminus\{u\}$. If $G$ has $n$ vertices, then $s(G)\leq\min_{u\in V(G)}\{n-1+\sigma(G_u)\}$.
\end{theorem}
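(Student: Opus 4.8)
The plan is to manufacture an explicit vertex labeling of $G$ out of a sum labeling of $G_u\cup\overline{K_k}$, where $k=\sigma(G_u)$, and then to show that \emph{every} edge label induced on $G$ already lies inside the (small) label set of that sum labeling. Concretely, I would fix a vertex $u$, set $k=\sigma(G_u)$, and take a sum labeling $f\colon V(G_u\cup\overline{K_k})\to\N$. Writing $S=f\bigl(V(G_u\cup\overline{K_k})\bigr)$ for its range, I note that since $f$ is injective on the $n-1+k$ vertices of $G_u\cup\overline{K_k}$, we have $|S|=n-1+\sigma(G_u)$. The goal is then to exhibit a labeling $g$ of $G$ whose induced edge labels all land in this single pool $S$.

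The construction I would use is to define $g\colon V(G)\to\Z$ by $g(v)=f(v)$ for every $v\in V(G_u)$ and $g(u)=0$. Because sum labelings take only positive values, $0\notin S$, so $g$ is injective. The reason for the choice $g(u)=0$ is exactly that the edges incident to $u$ then receive labels $g^+(uv)=0+f(v)=f(v)\in S$ for each neighbor $v$ of $u$; in other words, every edge at $u$ is labeled by one of the vertex labels of the sum labeling, so these contribute nothing outside $S$.

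For the remaining edges $vw\in E(G_u)$, I would invoke the defining property of a sum labeling: since $v$ and $w$ are adjacent in $G_u$, there is a vertex $x$ of $G_u\cup\overline{K_k}$ with $f(v)+f(w)=f(x)$, and hence $g^+(vw)=f(v)+f(w)\in S$ as well. Combining the two cases shows that the range of $g^+$ is contained in $S$, so $|g^+|\le|S|=n-1+\sigma(G_u)$ and therefore $s(G)\le n-1+\sigma(G_u)$. Minimizing over all choices of $u$ yields the stated bound.

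All of the individual steps are short, so the ``obstacle'' here is not computational but conceptual: the one nonobvious move is the choice $g(u)=0$, which collapses the $\deg(u)$ edges at $u$ onto labels already present in $S$ rather than letting them introduce new labels, so that the edges at $u$ and the edges of $G_u$ draw from a common pool of only $n-1+\sigma(G_u)$ labels. The small point worth checking carefully is the positivity convention $\N=\{1,2,\dots\}$ for sum labelings, since it is what guarantees simultaneously that $0\notin S$ (so $g$ is injective) and that $|S|$ is exactly $n-1+\sigma(G_u)$.
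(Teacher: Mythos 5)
Your proof is correct and follows essentially the same route as the paper: label $u$ with $0$, reuse a sum labeling of $G_u\cup\overline{K_{\sigma(G_u)}}$ on the remaining vertices, and observe that every induced edge label lies in the range of that sum labeling, which has size $n-1+\sigma(G_u)$. Your explicit remark that $0$ is not in the range of the sum labeling (so the combined labeling is injective) is a small point the paper leaves implicit, but the argument is the same.
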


\begin{proof}
For every vertex $u\in V(G)$, let $H_u=G_u\cup\overline{K_{\sigma(G_u)}}$, which is a sum graph by definition. Let $\widetilde{f}$ be a sum labeling of $H_u$. Define an injective vertex labeling $f:V(G)\to\Z$ such that $f(u)=0$ and $f(v)=\widetilde{f}(v)$ for all $v\in V(G_u)$. For all $vw\in E(G_u)$, $f^+(vw) =\widetilde{f}(x)$ for some $x\in V(H_u)$, and for all $uv\in E(G)$, $f^+(uv)=f(u)+f(v)=\widetilde{f}(v)$. This implies that $f^+(E(G))\subseteq\widetilde{f}(V(H_u))$. Thus,
$$s(G)\leq|f^+|\leq|V(H_u)|=|V(G_u)|+\sigma(G_u)=n-1+\sigma(G_u).$$
\end{proof}

\begin{theorem}\label{thm:esnbound}
The sum index is less than or equal to the exclusive sum number, i.e., $s(G)\leq\epsilon(G)$. Moreover, there exists a graph $G$ such that $s(G)<\epsilon(G)$.
\end{theorem}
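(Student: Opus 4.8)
The plan is to prove this in two parts, corresponding to the inequality and the strictness claim.

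For the inequality $s(G) \le \epsilon(G)$, I would argue by directly converting an exclusive sum labeling into a vertex labeling of $G$ with a controlled number of edge sums. Let $k = \epsilon(G)$ and let $f : V(G \cup \overline{K_k}) \to \N$ be a $k$-ESL of $G$. The key observation is that for every edge $vw \in E(G)$, the defining property of a $k$-ESL guarantees that $f(v) + f(w) = f(u)$ for some $u \in V(\overline{K_k})$. Therefore, if we restrict $f$ to $V(G)$ to obtain a vertex labeling (which is injective since $f$ is), every induced edge label $f^+(vw) = f(v) + f(w)$ lands in the set $f(V(\overline{K_k}))$, a set of cardinality $k$. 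Hence $\lvert f^+ \rvert \le k = \epsilon(G)$, which gives $s(G) \le \epsilon(G)$. This is the easy half and essentially mirrors the structure of the preceding $\sigma$-bound proof, but without needing the vertex $u$ to be relabeled to $0$.

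For the strictness claim, I need to exhibit a concrete graph $G$ with $s(G) < \epsilon(G)$. The natural strategy is to pick a graph whose exclusive sum number is known and large relative to its sum index. A complete graph $K_n$ is the obvious candidate: the paper has already told us that $s(K_n) = 2n - 3$, so if I can cite or compute $\epsilon(K_n) > 2n - 3$ for some $n$, I am done. I would look up the known value of $\epsilon(K_n)$ from the exclusive-sum-labeling literature (this quantity has been determined, and it grows faster than linearly, on the order of $2n - 3$ being beaten for small $n$ already), and verify the strict inequality for an explicit small case. Alternatively, a bipartite example using $K_{n,m}$ with $s(K_{n,m}) = n + m - 1$ compared against the known $\epsilon(K_{n,m})$ would work equally well and might give cleaner numbers.

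The main obstacle is the strictness part, not the inequality. The difficulty is entirely in finding a graph where the exclusive sum number has been computed in the literature and happens to strictly exceed the sum index, since $\epsilon$ is a delicate invariant that is only known for limited families. I would therefore focus my effort on selecting the cleanest family (most likely complete graphs or complete bipartite graphs, whose sum indices are already recorded in the excerpt) and pinning down the corresponding known value of $\epsilon$, then presenting a single explicit witness graph rather than a general construction. The inequality itself should be written out in full as above, while the strictness can be dispatched by citing the relevant computed value of $\epsilon$ and checking the arithmetic for the chosen example.
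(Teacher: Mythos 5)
Your argument for the inequality $s(G)\leq\epsilon(G)$ is correct and is essentially the paper's: restrict the $k$-ESL to $V(G)$ and observe that every induced edge sum lies in the image of the $k$ isolated vertices, so $|f^+|\leq k$.

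The strictness claim is where the proposal has a genuine gap. You propose to take $K_n$ (or $K_{n,m}$) and cite the known value of $\epsilon$; but the known value is $\epsilon(K_n)=2n-3$ (Miller et al.), which equals $s(K_n)=2n-3$, so complete graphs give equality, not strict inequality. The paper explicitly flags this (``$\epsilon(G)-s(G)=0$ for many families of graphs''), and the same equality holds for the other standard families whose exclusive sum numbers appear in the literature (e.g., wheels $W_\Delta$ with $\Delta\geq5$ have $\epsilon(W_\Delta)=s(W_\Delta)=\Delta$, which is how the paper computes $s(W_\Delta)$ in the first place). So the candidates you name are precisely the ones that fail, and the part of the theorem you correctly identify as the hard part is left unresolved. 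The paper instead constructs a specific tree $G$ with maximum degree $3$ and diameter $5$, so that $s(G)=\Delta(G)=3$ by the earlier Harrington--Wong result, and then rules out a $3$-ESL by a bespoke argument: assuming the three edge labels are $\alpha,\beta,\gamma$, the fact that $g^+$ is a proper edge coloring forces the labels around the degree-$3$ vertices, and adding and subtracting the resulting linear relations among vertex labels yields two \emph{nonadjacent} vertices $v_3,v_6$ with $g(v_3)+g(v_6)=\alpha$, contradicting the exclusivity condition. Some obstruction argument of this kind---exhibiting a graph where $s(G)=\Delta(G)$ but no $\Delta(G)$-ESL can exist---is required, and your proposal does not supply one.
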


\begin{proof}
Let $k=\epsilon(G)$, and let $g$ be a $k$-ESL of $G$. Then $g$ restricts to a vertex labeling $f$ of $G$ such that $|f^+|=|V(\overline{K_k})|=k$. As a result,
$$s(G)\leq|f^+|=k=\epsilon(G).$$

\begin{figure}[H]
    \centering
    \begin{tikzpicture}[scale=1]
    \coordinate(v0)at(0,0);\coordinate(v1)at(1,0); \coordinate(v2)at({1+cos(60)},{sin(60)});\coordinate(v3)at({2+cos(60)},{sin(60)});\coordinate(v4)at(1,{2*sin(60)});\coordinate(v5)at({cos(120)},{sin(120)});\coordinate(v6)at({-1+cos(120)},{sin(120)});\coordinate(v7)at(0,{2*sin(120)});\coordinate(v8)at({cos(-120)},{sin(-120)});\coordinate(v9)at({-1+cos(-120)},{sin(-120)});\coordinate(v10)at(0,{2*sin(-120)});\foreach\i in{v0,v1,v2,v3,v4,v5,v6,v7,v8,v9,v10}{\filldraw(\i)circle(0.1);};\draw(v6)--(v5)--(v0)--(v1)--(v2)--(v3);\draw(v7)--(v5);\draw(v9)--(v8)--(v0);\draw(v8)--(v10);\draw(v4)--(v2);\node[below right]at(v0){$v_0$};\node[below right]at(v1){$v_1$};\node[left]at(v2){$v_2$};\node[above right]at(v3){$v_3$};\node[above right]at(v4){$v_4$};\node[above left]at(v5){$v_5$};\node[above left]at(v6){$v_6$};\node[above right]at(v7){$v_7$};\node[right]at(v8){$v_8$};\node[above left]at(v9){$v_9$};\node[below right]at(v10){$v_{10}$};
    \end{tikzpicture}
    \caption{A graph $G$ with $s(G)=3$ but $\epsilon(G)>3$.}
    \label{fig:exclsumnum}
\end{figure}
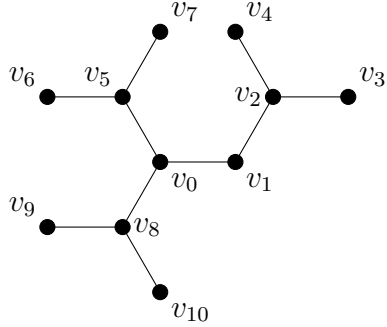

Now, we prove that the graph $G$ in Figure~$\ref{fig:exclsumnum}$ satisfies $s(G)<\epsilon(G)$. By Theorem $4.9$ in the paper by Harrington and Wong, since $G$ is a tree with maximum degree $3$ and diameter $5$, the sum index $s(G)=\Delta(G)=3$.  

For the sake of contradiction, assume that there is a $3$-ESL $g$ of $G$. Let the image of $g^+$ be the set $\{\alpha,\beta,\gamma\}$. As proved in Theorem \ref{thm:chromindex}, since $g$ is injective, $g^+$ is a proper edge coloring on $G$. Hence, every degree $3$ vertex in $G$ must be incident to edges with all three labels $\alpha$, $\beta$, and $\gamma$. 
Without loss of generality, assume that $g^+(v_0v_1) = \alpha$ and $g^+(v_1v_2)=\beta$. Since $v_2$ is incident to edges with all three labels, let $g^+(v_2v_3)=\gamma$. Since the subgraph induced by $\{v_0,v_5,v_6,v_7\}$ is isomorphic to the subgraph induced by $\{v_0,v_8,v_9,v_{10}\}$, we may assume that $g^+(v_0v_5)=\gamma$. Since $v_5$ is incident to edges with all three labels, let $g^+(v_5v_6)=\beta$. All assumptions are made without loss of generality.

When we consider the labels of the vertices, note that $g(v_0)+g(v_1)=\alpha$, $g(v_2)+g(v_3)=\gamma$, and $g(v_5)+g(v_6)=\beta$, so
$$g(v_0)+g(v_1)+g(v_2)+g(v_3)+g(v_5)+g(v_6)=\alpha+\beta+\gamma.$$
Also, since $g(v_0)+g(v_5)=\gamma$ and $g(v_1)+g(v_2)=\beta$, we have
$$g(v_0)+g(v_1)+g(v_2)+g(v_5)=\beta+\gamma.$$
As a result, $g(v_3)+g(v_6)=\alpha$ by subtraction. However, the edge $v_3v_6$ is not in $G$, contradicting the definition of a $3$-ESL. Therefore, there is no $3$-ESL of $G$, i.e., $\epsilon(G)>3$.
\end{proof}

After establishing both lower and upper bounds for the sum index, it is natural to ask how good these bounds are. In other words, we would like to study the magnitudes of $s(G)-\chi'(G)$ and $\epsilon(G)-s(G)$. One curious observation is that $\epsilon(G)-s(G)=0$ for many families of graphs. For example, $\epsilon(K_n)=2n-3$ \cite{mprsst} and $s(K_n)=2n-3$ \cite{hw}. We showed in Theorem~\ref{thm:esnbound} that there exists a graph $G$ such that $\epsilon(G)-s(G)\geq 1$; however, we do not know if $\epsilon(G)-s(G)$ could be arbitrarily large. Nonetheless, we show in the following theorem that $s(G)-\chi'(G)$ can get arbitrarily large.

\begin{theorem}\label{thm:disjointtriangles}
Let $G$ be a disjoint union of $n$ triangles, with vertex set and edge set $V(G)=\{u_i,v_i,w_i:1\leq i\leq n\}$ and $E(G)=\{u_iv_i,v_iw_i,w_iu_i:1\leq i\leq n\}$, respectively. Let $k$ be the minimum positive integer such that $\binom{k}{3}\geq n$. Then $s(G)=k$, and $s(G)-\chi'(G)=\Theta(n^{1/3})$.
\end{theorem}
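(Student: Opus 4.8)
The plan is to establish the two claims in sequence: first that $s(G)=k$, by matching lower and upper bounds, and then that $s(G)-\chi'(G)=\Theta(n^{1/3})$, which follows quickly once $s(G)=k$ is known. Throughout, the central observation is that in any vertex labeling $f$, the three edges of each triangle are mutually incident, so by the proper-edge-coloring argument of Theorem~\ref{thm:chromindex} they receive three \emph{distinct} labels under $f^{+}$. Thus each triangle uses a $3$-element subset of the global label set $S=f^{+}(E(G))$.

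For the lower bound $s(G)\ge k$, the key step is to show that distinct triangles must use distinct $3$-subsets of $S$. If a triangle has vertex labels $x,y,z$, then its edge labels are $x+y,\,y+z,\,z+x$, whose sum is $2(x+y+z)$; hence from the edge-label set $\{e_1,e_2,e_3\}$ one recovers $x+y+z=\tfrac12(e_1+e_2+e_3)$ and then the vertex-label set $\{\,\tfrac12(e_1+e_2+e_3)-e_i : i=1,2,3\,\}$. In other words, a triangle's edge-label set determines its vertex-label set. Since the triangles are vertex-disjoint and $f$ is injective, two triangles cannot share a vertex-label set, hence cannot share an edge-label set. Therefore the $n$ triangles use $n$ distinct $3$-subsets of $S$, forcing $\binom{|S|}{3}\ge n$ and so $|S|\ge k$; as this holds for every labeling, $s(G)\ge k$.

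For the upper bound $s(G)\le k$, I would construct an explicit labeling using at most $k$ edge labels. Since $\binom{k}{3}\ge n$, I can assign to the $n$ triangles $n$ distinct $3$-subsets of a fixed $k$-element set $S$, and use the formula above in reverse to read off the vertex labels of each triangle: given edge-label target $\{2s_a,2s_b,2s_c\}$, set the vertices to $s_b+s_c-s_a$, $s_a+s_c-s_b$, and $s_a+s_b-s_c$. Taking $S=2S_0$ with $S_0=\{N^{0},N^{1},\dots,N^{k-1}\}$ guarantees all edge labels are even (so the vertex labels are integers) and confines the edge labels to $S$, giving $|f^{+}|\le k$. The main obstacle is verifying that the resulting $3n$ vertex labels are \emph{globally distinct}, since the edge labels are shared across triangles and the vertex labels of different triangles are therefore coupled. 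This is where the choice of $S_0$ pays off: for $N\ge4$, every sum of three elements of $S_0$ (with repetition) is distinct by uniqueness of base-$N$ digits, so any coincidence $s_j+s_k-s_i=s_{j'}+s_{k'}-s_{i'}$ rearranges to an equality of two $3$-element sums, forcing the same multiset of indices and, after a short case analysis, the same triangle and the same vertex. Hence $f$ is injective and $s(G)\le k$, so $s(G)=k$.

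Finally, the asymptotic statement follows: a disjoint union of triangles has $\Delta(G)=2$ and each odd cycle is class two, so $\chi'(G)=3$ for all $n\ge1$. Since $\binom{k}{3}=\tfrac16 k(k-1)(k-2)\ge n>\binom{k-1}{3}$, we get $k=(6n)^{1/3}(1+o(1))$, whence $s(G)-\chi'(G)=k-3=\Theta(n^{1/3})$.
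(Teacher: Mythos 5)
Your proposal is correct and follows essentially the same route as the paper's proof: the lower bound via the observation that a triangle's edge-label $3$-set determines its vertex-label set (the paper phrases this as invertibility of a $3\times3$ matrix, you via the half-sum trick), and the upper bound via assigning distinct $3$-subsets of a geometric progression with ratio at least $4$ and checking injectivity through uniqueness of base-$N$ digit expansions. The only cosmetic difference is that you double the labels to guarantee integrality where the paper uses powers of $4$ (which are already even) and halves.
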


\begin{proof}
Let $f$ be a sum index labeling of $G$. If there exists $1\leq i,j\leq n$ such that
$$\{f^+(u_iv_i),f^+(v_iw_i),f^+(w_iu_i)\}=\{f^+(u_jv_j),f^+(v_jw_j),f^+(w_ju_j)\}=\{\alpha,\beta,\gamma\},$$
then $\{f(u_i),f(v_i),f(w_i)\}=\{f(u_j),f(v_j),f(w_j)\}$ by observing that the matrix equation
$$\begin{pmatrix}
1&1&0\\
0&1&1\\
1&0&1\end{pmatrix}\begin{pmatrix}
a\\
b\\
c\end{pmatrix}=\begin{pmatrix}
\alpha\\
\beta\\
\gamma\end{pmatrix}$$
has a unique solution to unknowns $a$, $b$, and $c$. By the injectivity of $f$, we have $i=j$. Hence, each triangle in $G$ must use a distinct $3$-subset of $f^+(E(G))$ for its edge labels, implying that $\binom{s(G)}{3}=\binom{|f^+|}{3}\geq n$. Therefore, $s(G)\geq k$.

To show that $s(G)\leq k$, let $\{\{\alpha_i,\beta_i,\gamma_i\}:1\leq i\leq n\}$
be a set of distinct $3$-subsets of $\{4^1,4^2,\dotsc,4^k\}$. Let $g:V(G)\to\Z$ such that $g(u_i)=(\alpha_i-\beta_i+\gamma_i)/2$, $g(v_i)=(\alpha_i+\beta_i-\gamma_i)/2$, and $g(w_i)=(-\alpha_i+\beta_i+\gamma_i)/2$. We shall verify that $g$ is injective. Note that $g(u_i)=g(v_i)$ implies $\beta_i=\gamma_i$, contradicting that $\{\alpha_i,\beta_i,\gamma_i\}$ forms a $3$-subset. Thus, $g(u_i)$ is distinct from $g(v_i)$. Similar argument shows that $g(u_i)$, $g(v_i)$, and $g(w_i)$ are mutually distinct. If $g(u_i)=g(u_j)$ for some $i\neq j$, then
$$\alpha_i+\beta_j+\gamma_i=\alpha_j+\beta_i+\gamma_j.$$
Viewing each side of the equality as the quaternary expansion of a positive integer, we observe that the equality holds if and only if $\{\alpha_i,\beta_j,\gamma_i\}=\{\alpha_j,\beta_i,\gamma_j\}$. Since $\beta_j$ is distinct from $\alpha_j$ and $\gamma_j$, we have $\beta_j=\beta_i$, which in turn implies $\{\alpha_i,\gamma_i\}=\{\alpha_j,\gamma_j\}$. As a result, we have $\{\alpha_i,\beta_i,\gamma_i\}=\{\alpha_j,\beta_j,\gamma_j\}$, contradicting that $\{\alpha_i,\beta_i,\gamma_i\}$ are distinct $3$-subsets. Hence, $g(u_i)\neq g(u_j)$. Similar argument establishes that $g(u_i)\neq g(v_j)$, $g(u_i)\neq g(w_j)$, and $g(v_i)\neq g(w_j)$. Therefore, $g$ is injective, thus $g$ is a vertex labeling. Consequently, $s(G)\leq|g^+|=|\{4^1,4^2,\dotsc,4^k\}|=k$.

It is trivial to see that $\chi'(G)=3$. Hence, $s(G)-\chi'(G)=k-3=\Theta(n^{1/3})$, since $k$ is the minimum positive integer such that $\frac{k(k-1)(k-2)}{6}\geq n$.
\end{proof}

\subsection{Sum index of cycles, spiders, wheels, and rectangular grids}\label{subsec:sumfamilies}

In this subsection, we extend the investigation of Harrington and Wong by determining the sum index for several families of graphs. First, we develop a lemma that allows us to shift vertex labels.

\begin{lemma}\label{lem:shifting}
Let $f$ be a vertex labeling of $G$. Let $g$ be a vertex labeling of $G$ such that for all vertices $v\in V(G)$, $g(v)=f(v)+c$ for some integer $c$. Then $|f^+|=|g^+|$.
\end{lemma}
\begin{proof}
For each edge $uv$ of $G$,
$$g^+(uv)=g(u)+g(v)=f(u)+f(v)+2c=f^+(uv)+2c.$$
This induces a well-defined bijection $h:f^+(E(G))\to g^+(E(G))$ such that $h(x)=x+2c$, and hence, $|f^+| = |g^+|$.
\end{proof}

The following corollary is an immediate consequence of Lemma~\ref{lem:shifting}.
\begin{corollary}\label{cor:0vertex}
Let $v$ be a vertex of $G$. There exists a sum index labeling $f$ such that $f(v) = 0$.
\end{corollary}

In the rest of this subsection, we determine the sum index of cycles, spiders, wheels, and $d$-dimensional rectangular grids. We also look into the bounds of the sum index of prisms and triangular grids. We begin with cycles.

\begin{theorem}\label{thm:sumcycles}
Let $n\geq3$ be an integer. Then $s(C_n)=3$.
\end{theorem}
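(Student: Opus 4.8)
The plan is to prove the two inequalities $s(C_n) \geq 3$ and $s(C_n) \leq 3$ separately. For the lower bound, I would first rule out the possibility that $s(C_n) \leq 2$. Since $C_n$ contains a vertex of degree $2$, Harrington and Wong's bound $s(G) \geq \Delta(G)$ already gives $s(C_n) \geq 2$, so the only thing to exclude is $s(C_n) = 2$. Suppose for contradiction that a vertex labeling $f$ achieves $|f^+| = 2$, say with edge-label set $\{\alpha, \beta\}$. By Theorem~\ref{thm:chromindex}, $f^+$ is a proper edge coloring, so consecutive edges around the cycle must alternate between $\alpha$ and $\beta$. Writing the vertices cyclically as $x_0, x_1, \dots, x_{n-1}$, the alternating pattern forces $f(x_0) + f(x_1) = \alpha$, $f(x_1) + f(x_2) = \beta$, $f(x_2) + f(x_3) = \alpha$, and so on, which yields $f(x_0) = f(x_2) = f(x_4) = \cdots$ and $f(x_1) = f(x_3) = \cdots$. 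Injectivity of $f$ is then violated as soon as $n \geq 4$ (two vertices of the same parity class collide), and for odd $n$ the alternation cannot even be completed consistently around the cycle. Either way $|f^+| = 2$ is impossible, so $s(C_n) \geq 3$.

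For the upper bound, I would construct an explicit vertex labeling $f$ with $|f^+| = 3$ for every $n \geq 3$. A natural approach is to produce a labeling whose induced edge sums take only three values while keeping the vertex labels distinct. One clean way is to assign labels so that the edge sums cycle through three values $\{\alpha, \beta, \gamma\}$; concretely, I would split into the cases $n \equiv 0, 1, 2 \pmod{3}$ (or handle parity as needed) and give an arithmetic-type labeling. For instance, using a pattern where most vertices receive labels from a slowly increasing sequence and a few are adjusted to close up the cycle, one can arrange the consecutive sums to repeat among three fixed values. By Corollary~\ref{cor:0vertex} I am free to normalize one vertex to $0$, which simplifies the bookkeeping. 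After writing down the labeling, the verification reduces to checking injectivity of $f$ and confirming that $f^+$ hits exactly three distinct values.

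The main obstacle will be constructing a single labeling (or a small number of congruence-class cases) that simultaneously keeps $f$ injective and forces the edge sums into only three values as the cycle wraps around. The tension is that a cycle is a closed walk, so whatever labels accumulate around the loop must consistently return to the start; a two-value alternation fails precisely because of this closure constraint, and the three-value construction must respect the same constraint without sacrificing injectivity. I expect the cleanest route is to fix three target sums and solve for vertex labels along the cycle, then verify that the resulting values are pairwise distinct—choosing the free parameters (e.g.\ using a large base or well-separated values) to guarantee no accidental coincidences. Handling the closure condition modulo the length of the cycle, and thus splitting into residue classes of $n$, is where the only real care is needed.
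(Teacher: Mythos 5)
Your overall plan (lower bound by ruling out two edge labels, upper bound by explicit construction) matches the paper's, but there is a genuine error in the lower-bound argument and the upper bound is left as a sketch. In the even case, you claim that the alternating pattern $f(x_0)+f(x_1)=\alpha$, $f(x_1)+f(x_2)=\beta$, $f(x_2)+f(x_3)=\alpha,\dots$ yields $f(x_0)=f(x_2)=f(x_4)=\cdots$, so that injectivity fails. This is false: subtracting consecutive equations gives $f(x_2)-f(x_0)=\beta-\alpha\neq 0$, so the labels in each parity class form a nonconstant arithmetic progression with common difference $\beta-\alpha$ and are in fact pairwise \emph{distinct}. (The collision you describe is what happens when two \emph{incident} edges share a label, which is exactly what properness already rules out.) No injectivity violation occurs along the path; the contradiction comes only from closing the cycle: wrapping around forces $f(x_0)=f(x_0)+\frac{n}{2}(\beta-\alpha)$, hence $\alpha=\beta$. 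You allude to this closure constraint in your closing discussion, but the concrete derivation you wrote is not the correct one. The paper gets the same contradiction by double counting: summing $g^+$ over the $\alpha$-edges and over the $\beta$-edges each gives $\sum_i g(v_i)$, forcing $\frac{n}{2}\alpha=\frac{n}{2}\beta$.

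For the upper bound you never actually produce a labeling; you outline a case split on $n\bmod 3$ and defer both the construction and the injectivity check, which is the entire content of that half of the proof. No case split is needed: the paper takes $f(v_i)=(-1)^i i$ for $0\leq i\leq n-1$, so that every consecutive edge sum is $(-1)^{i+1}=\pm1$ while the closing edge gets $(-1)^{n-1}(n-1)\neq\pm1$ for $n\geq3$, and injectivity is immediate since $|f(v_i)|=i$. You should supply such an explicit labeling (this one works uniformly for all $n\geq 3$) and repair the even-cycle contradiction along the lines above.
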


\begin{proof}
Let $C_n$ be the cycle $v_0v_1v_2\dotsb v_{n-1}v_0$. Define $f$ to be a vertex labeling of $C_n$ such that $f(v_i)=(-1)^ii$ for all $0\leq i\leq n-1$. Then $f^+(v_iv_{i+1})=f(v_i)+f(v_{i+1})=\pm1$ for all $0\leq i\leq n-2$, and $f^+(v_{n-1}v_0)=f(v_{n-1})+f(v_0)=(-1)^{n-1}(n-1)\neq\pm1$. As a result, $|f^+|=3$, so $s(C_n)\leq3$. It remains to prove that $s(C_n)\geq3$.

By Proposition $\ref{thm:chromindex}$, $s(C_n)\geq\chi'(C_n)=3$ if $n$ is odd, and $s(C_n)\geq\chi'(C_n)=2$ if $n$ is even. If $n$ is even, for the sake of contradiction, assume that $g$ is a sum index labeling of $C_n$ such that the image of $g^+$ is the set $\{\alpha,\beta\}$, where $\alpha\neq\beta$. Since $g^+$ is a proper edge coloring of $C_n$, without loss of generality, assume that $g^+(v_iv_{i+1})=\alpha$ if $i$ is even and $g^+(v_iv_{i+1})=\beta$ if $i$ is odd, where addition in the indices is performed modulo $n$. As a result,
$$\sum_{i=0}^{n-1}g(v_i)=\sum_{\substack{0\leq i\leq n-1\\i\text{ is even}}}g^+(v_iv_{i+1})=\frac{n}{2}\cdot\alpha$$
and $$\sum_{i=0}^{n-1}g(v_i)=\sum_{\substack{0\leq i\leq n-1\\i\text{ is odd}}}g^+(v_iv_{i+1})=\frac{n}{2}\cdot\beta,$$
contradicting that $\alpha\neq\beta$. Therefore, $s(C_n)\geq3$ for all integers $n\geq3$.
\end{proof}

Let $\Delta\geq3$ be an integer, and let $\ell_1,\ell_2,\dotsc,\ell_\Delta$ be positive integers. A \emph{spider} $S_{\ell_1,\ell_2,\dotsc,\ell_\Delta}$ is a graph that consists of a center vertex $v_0$, which serves as a common endpoint of $\Delta$ paths $v_0v_{i,1}v_{i,2}\dotsb v_{i,\ell_i}$, where $1\leq i\leq\Delta$.

\begin{theorem}\label{thm:spider}
For every spider $S_{\ell_1,\ell_2,\dotsc,\ell_\Delta}$, $s(S_{\ell_1,\ell_2,\dotsc,\ell_\Delta})=\Delta$.
\end{theorem}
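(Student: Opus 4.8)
The plan is to prove the two inequalities $s(S_{\ell_1,\dotsc,\ell_\Delta})\geq\Delta$ and $s(S_{\ell_1,\dotsc,\ell_\Delta})\leq\Delta$ separately. The lower bound is immediate: the center vertex $v_0$ has degree $\Delta$, so $\Delta(S_{\ell_1,\dotsc,\ell_\Delta})=\Delta$, and by Theorem~\ref{thm:chromindex} together with the trivial bound $\chi'(G)\geq\Delta(G)$ we get $s\geq\Delta$. (Concretely, in any sum index labeling the $\Delta$ edges incident to $v_0$ receive $\Delta$ distinct values under $f^+$, since $f^+$ is a proper edge coloring.) The substance of the theorem is therefore the construction realizing $|f^+|=\Delta$.

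For the upper bound I would exhibit an explicit vertex labeling. Using Corollary~\ref{cor:0vertex}, normalize $f(v_0)=0$. Fix a large base $B$ and designate the $\Delta$ values $B^1,B^2,\dotsc,B^\Delta$ as the only allowed edge sums. Assign $f(v_{i,1})=B^i$, so that the center edge $v_0v_{i,1}$ receives sum $B^i$ and the $\Delta$ center edges already exhaust all $\Delta$ targets. I would then propagate down each arm by the recursion $f(v_{i,j+1})=c-f(v_{i,j})$, where the target $c$ alternates between two of the designated values, say $B^i$ and $B^{i+1}$ (reading $B^{\Delta+1}$ cyclically as $B^{1}$, or simply using $B^{\Delta-1},B^{\Delta}$ on the last arm), starting with the choice that avoids $f(v_{i,2})=0$. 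By design every edge sum lies in $\{B^1,\dotsc,B^\Delta\}$, so $f^+(E)\subseteq\{B^1,\dotsc,B^\Delta\}$ and hence $|f^+|\leq\Delta$. Combined with the lower bound this forces $s=\Delta$, provided $f$ is injective.

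The one genuine obstacle is establishing injectivity of $f$, since the recursion generates many labels that could a priori collide between different arms; indeed a naive symmetric choice (using the same pair of internal sums on every arm) produces ``translate'' collisions, so the asymmetry built into the choice of targets $B^i,B^{i+1}$ per arm is essential. I would resolve this with a leading-term (base-$B$) argument in the same spirit as the powers-of-$4$ computation in Theorem~\ref{thm:disjointtriangles}: unwinding the recursion shows that, apart from $f(v_{i,1})=B^i$, every label on arm $i$ has the form (integer coefficient of absolute value at most $\ell_i$) times $B^{i+1}$ plus strictly lower-order terms, with a sign determined by the parity of the position. Choosing $B>2\max_i\ell_i+2$ makes the exponent $i$ separate the arms and makes the coefficient together with the sign separate the positions within an arm, while $f(v_0)=0$ is clearly avoided. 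I expect this magnitude bookkeeping, rather than any conceptual difficulty, to be the main work; the remaining verifications (that the alternation never forces an immediate repeat and that all edge sums indeed land in the designated set) are routine.
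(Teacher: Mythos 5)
Your proposal is correct, and while its overall architecture matches the paper's (lower bound from $s(G)\geq\Delta(G)$ via the degree of the center; upper bound from an explicit labeling with $f(v_0)=0$, the $\Delta$ spokes realizing the $\Delta$ distinct edge sums, and every arm-internal edge reusing a sum already present on a spoke), the concrete construction and the injectivity argument are genuinely different. The paper designates target sums $\{\pm1,\pm2,\dotsc,\pm\alpha\}$ with $\alpha=\lceil\Delta/2\rceil$ and forces \emph{every} internal edge on every arm to have sum $\pm\alpha$, so labels grow linearly like $(j-1)\alpha+\lceil(i+\xi)/2\rceil$ up to sign; injectivity then falls out of an interval argument ($|f(v_{i,j})|\in((j-1)\alpha,j\alpha]$ separates positions $j$, and the distinct signed first-step values separate arms). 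Your version uses geometric targets $B^1,\dotsc,B^\Delta$ and two alternating internal sums $B^i,B^{i+1}$ per arm, with injectivity by a base-$B$/leading-coefficient argument; unwinding your recursion gives $f(v_{i,2m})=m(B^{i+1}-B^i)$ and $f(v_{i,2m+1})=(m+1)B^i-mB^{i+1}$, whose $B^{i+1}$-coefficients $0,1,-1,2,-2,\dotsc$ separate positions within an arm, and the supports $\{i,i+1\}$ (plus the sign pattern for the adjacent and cyclically wrapped arms) separate arms once $B$ exceeds roughly twice the maximum arm length. Both routes work; the paper's buys a shorter injectivity check and labels of polynomial size, while yours trades larger labels for a mechanical base-$B$ verification in the spirit of the disjoint-triangles argument. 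The one detail worth making explicit in your write-up is the cross-arm check for consecutive arms $i$ and $i+1$, which share the power $B^{i+1}$: there the only arm-$(i+1)$ label without a $B^{i+2}$ term is $B^{i+1}$ itself, and one must verify (as the coefficient equations above do) that no arm-$i$ label equals it.
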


\begin{proof}
Since a lower bound of the sum index is the maximum degree, we have $s(S_{\ell_1,\ell_2,\dotsc,\ell_\Delta})\geq\Delta$. Hence, it suffices to find a vertex labeling $f$ such that $|f^+| = \Delta$.

Let $\xi$ be the smallest nonnegative integer such that $\Delta\mod{\xi}{2}$, and let $\alpha=\frac{\Delta+\xi}{2}$. Define $f(v_0)=0$. For all $1\leq i\leq\Delta$ and $1\leq j\leq\ell_i$, define
$$f(v_{i,j})=(-1)^{\Delta-i+j-1}\left((j-1)\alpha+\left\lceil\frac{i+\xi}{2}\right\rceil\right).$$
Figure~\ref{fig:spider} shows the vertex labeling of $f$ on the spider $S_{3,1,2,4,2,3,4}$.
\begin{figure}[H]
\centering
\begin{tikzpicture}[scale=1.5]
\foreach\r in{0.1/1.5}{\filldraw(0,0)circle(\r);\foreach\i in{1,2,3}{\filldraw({\i*cos(0*360/7)},{\i*sin(0*360/7)})circle(\r);}\foreach\i in{1}{\filldraw({\i*cos(1*360/7)},{\i*sin(1*360/7)})circle(\r);}\foreach\i in{1,2}{\filldraw({\i*cos(2*360/7)},{\i*sin(2*360/7)})circle(\r);}\foreach\i in{1,2,3,4}{\filldraw({\i*cos(3*360/7)},{\i*sin(3*360/7)})circle(\r);}\foreach\i in{1,2}{\filldraw({\i*cos(4*360/7)},{\i*sin(4*360/7)})circle(\r);}\foreach\i in{1,2,3}{\filldraw({\i*cos(5*360/7)},{\i*sin(5*360/7)})circle(\r);}\foreach\i in{1,2,3,4}{\filldraw({\i*cos(6*360/7)},{\i*sin(6*360/7)})circle(\r);}}\foreach[count=\i]\j in{3,1,2,4,2,3,4}{\draw(0,0)--({\j*cos((\i-1)*360/7)},{\j*sin((\i-1)*360/7)});}
\node[left]at(-0.1,0){$0$};\node[above right]at({1*cos(0*360/7)},{1*sin(0*360/7)}){$1$};\node[above right]at({2*cos(0*360/7)},{2*sin(0*360/7)}){$-5$};\node[above right]at({3*cos(0*360/7)},{3*sin(0*360/7)}){$9$};\node[above right]at({1*cos(1*360/7)},{1*sin(1*360/7)}){$-2$};\node[above right]at({1*cos(2*360/7)},{1*sin(2*360/7)}){$2$};\node[above right]at({2*cos(2*360/7)},{2*sin(2*360/7)}){$-6$};\node[above right]at({1*cos(3*360/7)},{1*sin(3*360/7)}){$-3$};\node[above right]at({2*cos(3*360/7)},{2*sin(3*360/7)}){$7$};\node[above right]at({3*cos(3*360/7)},{3*sin(3*360/7)}){$-11$};\node[above right]at({4*cos(3*360/7)},{4*sin(3*360/7)}){$15$};\node[above left]at({1*cos(4*360/7)},{1*sin(4*360/7)}){$3$};\node[above left]at({2*cos(4*360/7)},{2*sin(4*360/7)}){$-7$};\node[above left]at({1*cos(5*360/7)},{1*sin(5*360/7)}){$-4$};\node[above left]at({2*cos(5*360/7)},{2*sin(5*360/7)}){$8$};\node[above left]at({3*cos(5*360/7)},{3*sin(5*360/7)}){$-12$};\node[above right]at({1*cos(6*360/7)},{1*sin(6*360/7)}){$4$};\node[above right]at({2*cos(6*360/7)},{2*sin(6*360/7)}){$-8$};\node[above right]at({3*cos(6*360/7)},{3*sin(6*360/7)}){$12$};\node[above right]at({4*cos(6*360/7)},{4*sin(6*360/7)}){$-16$};
\end{tikzpicture}
\caption{The vertex labeling $f$ on the spider $S_{3,1,2,4,2,3,4}$}
\label{fig:spider}
\end{figure}

To verify that $f$ is a vertex labeling, we need to prove that $f$ is injective. Note that
$$(j-1)\alpha<|f(v_{i,j})|=(j-1)\alpha+\left\lceil\frac{i+\xi}{2}\right\rceil\leq j\alpha,$$
so $f(v_{i,j})\neq f(v_{i',j'})$ if $j\neq j'$. Moreover, if $j=j'$, then $f(v_{i,j})\neq f(v_{i',j})$ since $(-1)^{-i}\left\lceil\frac{i+\xi}{2}\right\rceil$ are all distinct. To verify that $|f^+|=\Delta$, we show that
$$
f^+(E(S_{\ell_1,\ell_2,\dotsc,\ell_\Delta}))=\{f^+(v_0v_{i,1}):1\leq i\leq\Delta\},$$
which has cardinality $\Delta$ since $f^+$ is a proper edge coloring. Note that $\{f^+(v_0v_{i,1}):1\leq i\leq\Delta\}$ is $\{-1,1,-2,2,\dotsc,-\alpha,\alpha\}$ and $\{1,-2,2,\dotsc,-\alpha,\alpha\}$ when $\Delta$ is even and odd, respectively. For each $1\leq i\leq\Delta$ and $1\leq j\leq\ell_i-1$,
\begin{align*}
    f^+(v_{i,j}v_{i,j+1})&=(-1)^{\Delta-i+j-1}\left((j-1)\alpha+\left\lceil\frac{i+\xi}{2}\right\rceil\right)+(-1)^{\Delta-i+j}\left(j\alpha+\left\lceil\frac{i+\xi}{2}\right\rceil\right)\\
    &=(-1)^{\Delta-i+j}\alpha,
\end{align*}
which is an element of $\{f^+(v_0v_{i,1}):1\leq i\leq\Delta\}$.
\end{proof}

Let $\Delta\geq3$ be an integer. A \emph{wheel} $W_\Delta$ is a graph that consists of a center vertex $v_0$, a cycle $v_1v_2\dotsb v_nv_1$, and edges $v_0v_i$ for all $1\leq i\leq n$.

\begin{theorem}\label{thm:wheel}
Let $\Delta\geq3$ be an integer, and let $W_{\Delta}$ be the wheel graph with maximum degree $\Delta$. Then $s(W_\Delta)=\max\{5,\Delta\}$.
\end{theorem}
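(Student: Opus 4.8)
The plan is to normalize the labeling and reduce the statement to a purely combinatorial question about cyclic sequences. Write the rim as $v_1v_2\cdots v_\Delta v_1$ and the hub as $v_0$. By Corollary~\ref{cor:0vertex} I may assume $f(v_0)=0$, so each spoke label is $f^+(v_0v_i)=f(v_i)$. Since $f$ is injective, these $\Delta$ spoke labels are pairwise distinct, which immediately gives $|f^+|\ge\Delta$ and hence $s(W_\Delta)\ge\Delta$ (recovering the maximum-degree bound). The key observation is that $|f^+|=\Delta$ holds if and only if every rim label $f(v_i)+f(v_{i+1})$ already lies in the set $A=\{f(v_1),\dots,f(v_\Delta)\}$ of spoke labels. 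Thus achieving exactly $\Delta$ labels is equivalent to arranging $\Delta$ distinct nonzero integers on a cycle so that every sum of two consecutive entries again belongs to the set.

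For the lower bound I must additionally show $s(W_\Delta)\ge 5$ when $\Delta\in\{3,4\}$. For $\Delta=3$ the wheel is $K_4$, so $s(W_3)=2\cdot4-3=5$ by the result of Harrington and Wong. For $\Delta=4$ I argue by contradiction: if $|f^+|=4$, then by the reduction the rim labels $a_1,a_2,a_3,a_4$ (distinct and nonzero) satisfy $a_i+a_{i+1}\in\{a_1,a_2,a_3,a_4\}$ cyclically; since $a_i+a_{i+1}$ can equal neither $a_i$ nor $a_{i+1}$, each sum has only two possible values, and a short finite case check propagates these choices into an impossible relation (forcing some $a_i=0$). This yields $s(W_4)\ge5$, and together with the bound above we obtain $s(W_\Delta)\ge\max\{5,\Delta\}$.

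For the matching upper bounds I exhibit explicit labelings. For $\Delta=3$ the value $5$ is already attained by $K_4$; for $\Delta=4$ the labeling $f(v_0)=0$ and $(f(v_1),\dots,f(v_4))=(1,2,-1,3)$ uses exactly the five labels $\{-1,1,2,3,4\}$. For $\Delta\ge5$ I build, via the reduction above, a cyclic arrangement of $\Delta$ distinct nonzero integers whose consecutive sums stay in the set, handled by parity. When $\Delta=2m+1$ is odd I take the sequence $(-m,\,m+1,\,-(m-1),\,m,\dots,-1,\,2,\,1)$, whose consecutive sums are all among $\{1,2,3,-(m-1)\}\subseteq A$; when $\Delta=2m$ is even I use an analogous zig-zag over $\{\pm1,\dots,\pm m\}$ whose consecutive sums have magnitude at most $m$. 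In each case I verify that the entries are distinct and nonzero and that every rim sum lies in $A$, so that $|f^+|=\Delta$, giving $s(W_\Delta)\le\Delta=\max\{5,\Delta\}$.

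I expect the main obstacle to be this explicit construction for $\Delta\ge5$: producing, uniformly in $\Delta$ and across both parities, a cyclic ordering of the chosen integers for which all $\Delta$ consecutive sums — including the wrap-around sum $f(v_\Delta)+f(v_1)$ — provably land back in the label set, while keeping every label distinct and nonzero. Checking injectivity and sum-containment in the even case (and confirming that the small boundary values $\Delta=5,6$ fit the general pattern) is the most delicate bookkeeping, whereas the lower-bound case analysis for $\Delta=4$ becomes routine once the reduction is in place.
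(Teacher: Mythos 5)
Your proposal is essentially correct but reaches the result by a genuinely different route for the main case $\Delta\geq5$. The paper gets the upper bound there non-constructively, by quoting Tuga and Miller's result $\epsilon(W_\Delta)=\Delta$ for $\Delta\geq5$ and then applying $s(G)\leq\epsilon(G)$ (Theorem~\ref{thm:esnbound}); you instead reduce the problem, after normalizing $f(v_0)=0$, to arranging $\Delta$ distinct nonzero integers in a cycle so that every consecutive sum lies back in the set, and then you exhibit such arrangements. This is more self-contained and elementary, and it also supplies the explicit labeling witnessing $s(W_4)\leq5$, which the paper's proof leaves implicit. Your odd-case sequence checks out: for $\Delta=2m+1$ the consecutive sums are $1$, $2$, $3$, and $-(m-1)$, all of which lie in $\{\pm1,\dotsc,\pm m\}\cup\{m+1\}$ for $m\geq2$. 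For the lower bound at $\Delta=4$, your argument is in spirit the same as the paper's: both normalize the hub to $0$ and run a finite case analysis; the paper organizes the sixteen-odd cases into two candidate proper edge colorings and kills each with an invertible-matrix computation, while you would enumerate the choices $a_i+a_{i+1}\in\{a_1,\dotsc,a_4\}\setminus\{a_i,a_{i+1}\}$ directly. Either works, but you should actually write out the case check rather than assert it.

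The one genuine hole is the even case $\Delta=2m\geq6$, where you only gesture at ``an analogous zig-zag over $\{\pm1,\dotsc,\pm m\}$ whose consecutive sums have magnitude at most $m$.'' Note that bounded magnitude alone is not enough: the sums must also be nonzero (since $0\notin A$), i.e.\ you must never place $i$ next to $-i$. The construction does exist --- alternating positive and negative entries while avoiding adjacent $i,-i$ is exactly a Hamiltonian cycle in $K_{m,m}$ minus a perfect matching, which is Hamiltonian for $m\geq3$, and then every consecutive sum is a nonzero integer of magnitude less than $m$, hence in $A$. For instance $(-3,2,-1,3,-2,1)$ works for $\Delta=6$ and $(1,-2,4,-3,2,-1,3,-4)$ for $\Delta=8$. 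But as written your proposal asserts rather than produces the general even-case sequence, and that is the step you must make explicit before the proof is complete.
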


\begin{proof}
Since the maximum degree serves as a lower bound for $s(W_\Delta)$, we have $s(W_\Delta)\geq\Delta$. Tuga and Miller showed that if $\Delta\geq5$, then $\epsilon(W_\Delta)=\Delta$ \cite{ESLradius1}. By Theorem~\ref{thm:esnbound}, we have $s(W_\Delta)\leq\Delta$. Therefore, when $\Delta\geq5$, we have $s(W_\Delta)=\Delta$.

Note that $W_3$ is isomorphic to the complete graph $K_4$. By Harrington and Wong, we have $s(W_3)=s(K_4)=2\cdot4-3=5$. It remains to show that $s(W_4)=5$. For the sake of contradiction, assume that $f$ is a sum index labeling of $W_4$, where the image of $f^+$ is the set $\{\alpha,\beta,\gamma,\delta\}$ and $\alpha,\beta,\gamma,\delta$ are distinct. By Corollary~\ref{cor:0vertex}, we may further assume that $f(v_0)=0$. Since $f^+$ forms a proper edge labeling on $W_4$, it is not difficult to see that the only two candidates for $f^+$ are given by Figures~\ref{fig:wheel1} and \ref{fig:wheel2}.
\begin{figure}[H]
\centering
\begin{minipage}{0.45\linewidth}
\centering
\begin{tikzpicture}
\coordinate(v0)at(0,0);\coordinate(v1)at(2,0);\coordinate(v2)at(0,2);\coordinate(v3)at(-2,0);\coordinate(v4)at(0,-2);\foreach\i in{v0,v1,v2,v3,v4}{\filldraw(\i)circle(0.1);}\draw(v3)--(v1)--(v2)--(v3)--(v4)--(v1);\draw(v2)--(v4);\node[below right]at(v0){$v_0$};\node[above right]at(v1){$v_1$};\node[above right]at(v2){$v_2$};\node[above left]at(v3){$v_3$};\node[below right]at(v4){$v_4$};\node[above]at(1,0){$\alpha$};\node[left]at(0,1){$\beta$};\node[above]at(-1,0){$\gamma$};\node[left]at(0,-1){$\delta$};\node[above right]at(1,1){$\gamma$};\node[above left]at(-1,1){$\delta$};\node[below left]at(-1,-1){$\alpha$};\node[below right]at(1,-1){$\beta$};
\end{tikzpicture}
\caption{Candidate $1$ for $f^+$ on $W_4$}
\label{fig:wheel1}
\end{minipage}\qquad
\begin{minipage}{0.45\linewidth}
\centering
\begin{tikzpicture}
\coordinate(v0)at(0,0);\coordinate(v1)at(2,0);\coordinate(v2)at(0,2);\coordinate(v3)at(-2,0);\coordinate(v4)at(0,-2);\foreach\i in{v0,v1,v2,v3,v4}{\filldraw(\i)circle(0.1);}\draw(v3)--(v1)--(v2)--(v3)--(v4)--(v1);\draw(v2)--(v4);\node[below right]at(v0){$v_0$};\node[above right]at(v1){$v_1$};\node[above right]at(v2){$v_2$};\node[above left]at(v3){$v_3$};\node[below right]at(v4){$v_4$};\node[above]at(1,0){$\alpha$};\node[left]at(0,1){$\beta$};\node[above]at(-1,0){$\gamma$};\node[left]at(0,-1){$\delta$};\node[above right]at(1,1){$\delta$};\node[above left]at(-1,1){$\alpha$};\node[below left]at(-1,-1){$\beta$};\node[below right]at(1,-1){$\gamma$};
\end{tikzpicture}
\caption{Candidate $2$ for $f^+$ on $W_4$}
\label{fig:wheel2}
\end{minipage}
\end{figure}

In both Figures~\ref{fig:wheel1} and \ref{fig:wheel2}, since $f(v_0)=0$, we have $(f(v_1),f(v_2),f(v_3),f(v_4))=(\alpha,\beta,\gamma,\delta)$ from the spokes of $W_4$. In Figure~\ref{fig:wheel1}, from the cycle of $W_4$, we obtain the matrix equation
$$\begin{pmatrix}
1&1&0&0\\
0&1&1&0\\
0&0&1&1\\
1&0&0&1
\end{pmatrix}\begin{pmatrix}\alpha\\\beta\\\gamma\\\delta\end{pmatrix}=\begin{pmatrix}\gamma\\\delta\\\alpha\\\beta\end{pmatrix}.$$
It follows that
$$\begin{pmatrix}
1&1&0&0\\
0&1&1&0\\
0&0&1&1\\
1&0&0&1
\end{pmatrix}\begin{pmatrix}\alpha\\\beta\\\gamma\\\delta\end{pmatrix}=\begin{pmatrix}
0&0&1&0\\
0&0&0&1\\
1&0&0&0\\
0&1&0&0
\end{pmatrix}\begin{pmatrix}\alpha\\\beta\\\gamma\\\delta\end{pmatrix},$$
and by subtraction and factorization, we have
$$\begin{pmatrix}
1&1&-1&0\\
0&1&1&-1\\
-1&0&1&1\\
1&-1&0&1
\end{pmatrix}\begin{pmatrix}\alpha\\\beta\\\gamma\\\delta\end{pmatrix}=\begin{pmatrix}0\\0\\0\\0\end{pmatrix}.$$
Since the $4\times4$ matrix in the last equation is invertible, we arrive at the contradiction that $\alpha=\beta=\gamma=\delta=0$. A similar argument shows that the edge labeling in Figure~\ref{fig:wheel2} is also impossible.
\end{proof}

Let $2\leq m\leq n$ be integers. An \emph{$n\times m$ rectangular grid} $L_{n\times m}$ is a graph with the vertex set $\{v_{i,j}:0\leq i\leq n-1,0\leq j\leq m-1\}$ and the edge set $\{v_{i,j}v_{i',j'}:|i-i'|+|j-j'|=1\}$. A special type of rectangular grid is $L_{n\times2}$, which is a \emph{ladder graph} with $n$ ``rungs."

\begin{theorem}\label{thm:grids}
Let $G$ be a rectangular grid.  If $G$ is a ladder graph, then $s(G)=3$; otherwise, $s(G)=4$.
\end{theorem}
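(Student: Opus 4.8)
The plan is to prove matching lower and upper bounds in each case, with all the real work in the upper-bound constructions. For the lower bounds I would first record the degree structure: every interior vertex $v_{i,j}$ (with $1\le i\le n-2$ and $1\le j\le m-2$) has degree $4$, while in a ladder the ``rung'' vertices $v_{i,0},v_{i,1}$ with $0<i<n-1$ have degree $3$. When $m\ge 3$ (so $n\ge m\ge 3$) an interior vertex exists, giving $\Delta(G)=4$, hence $s(G)\ge\Delta(G)=4$ by the bound noted in the introduction. For a ladder with $n\ge 3$ we have $\Delta(G)=3$, so $s(G)\ge 3$; the only remaining ladder $L_{2\times 2}$ is just $C_4$, whose sum index is $3$ by Theorem~\ref{thm:sumcycles}. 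Thus in every case the lower bound already equals the claimed value, and it remains only to exhibit vertex labelings attaining it.

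For the ladder upper bound I would give an explicit labeling with only three edge sums. The key observation is that, since each rung vertex meets at most one rung, all rungs can be forced to carry a single common sum. Concretely, set $f(v_{i,0})=(-1)^i(i+1)$ and $f(v_{i,1})=(-1)^{i+1}(i+1)$. Then every rung sum $f(v_{i,0})+f(v_{i,1})$ telescopes to $0$, while consecutive labels along either rail sum to $(-1)^{i+1}$ or $(-1)^i$, always in $\{1,-1\}$. Hence $f^+$ takes only the values in $\{-1,0,1\}$, so $|f^+|=3$. Injectivity is immediate once one checks that the two rails occupy disjoint sign/parity classes: rail $0$ consists of the positive odd and negative even integers of magnitude at most $n$, and rail $1$ of their negatives.

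For a grid with $m\ge 3$ I would exploit the bipartite $(i+j)$-parity structure and aim for the four sums $\{1,-1,n,-n\}$. Writing $f(v_{i,j})=(-1)^{i+j}(i+nj+1)$, a horizontal edge contributes $(-1)^{i+j}\bigl((i+nj+1)-(i+1+nj+1)\bigr)=(-1)^{i+j+1}\in\{1,-1\}$, and a vertical edge contributes $(-1)^{i+j}\bigl((i+nj+1)-(i+n(j+1)+1)\bigr)=(-1)^{i+j+1}n\in\{n,-n\}$. Since $n\ge 3$ these four values are distinct and all occur, so $|f^+|=4$. Injectivity follows because the magnitude $i+nj+1$ is a strictly monotone base-$n$ expression: $i+nj=i'+nj'$ with $0\le i,i'\le n-1$ forces $(i,j)=(i',j')$, and equal values of $f$ force equal magnitudes.

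I expect the main obstacle to be the design phase rather than the verification: one must recognize that for $m\ge 3$ the two vertical edges at an interior vertex force at least two distinct ``vertical'' sums, so three sums is impossible and four is the right target, and then find a single weighting — here $(-1)^{i+j}(i+nj+1)$ — that simultaneously pins the horizontal differences to magnitude $1$ and the vertical differences to magnitude $n$ while staying injective. For the ladder the analogous insight is that all rung sums can be collapsed to one value, which is exactly what breaks once a third column is present; conceptually this is why the two regimes separate at $m=2$ versus $m\ge 3$.
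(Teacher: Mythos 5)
Your proposal is correct and follows essentially the same strategy as the paper: the lower bounds come from $s(G)\geq\Delta(G)$ together with the $C_4$ case for $L_{2\times 2}$, and the upper bounds come from explicit alternating-sign labelings whose edge sums collapse to three values on a ladder and to $\{\pm 1,\pm n\}$ on a wider grid. Your specific labelings differ only cosmetically from the paper's (which uses $(-1)^{i+j}(mi+j)$ and a piecewise rule for ladders), so no further comparison is needed.
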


\begin{proof}
First, consider $m=2$. If $n=2$, then $G$ is isomorphic to $C_4$, and $s(G)=3$ by Theorem~\ref{thm:sumcycles}. If $n>2$, then $s(G)\geq\Delta(G)=3$. To show that $s(G)\leq3$, we define
$$f(v_{i,j})=\begin{cases}
-i&\text{if }i\text{ is even and }j=0;\\
i+1&\text{if }i\text{ is odd and }j=0;\\
i+1&\text{if }i\text{ is even and }j=1;\\
-i&\text{if }i\text{ is odd and }j=1.
\end{cases}$$
It is easy to verify that $f$ forms a vertex labeling on $G$ and $f^+(E(G))=\{0,1,2\}$. Hence, $s(G)\leq|f^+|=3$.

Next, consider $m>2$. Note that $s(G)\geq\Delta(G)=4$, so it remains to show that $s(G)\leq4$. Define $f(v_{i,j})=(-1)^{i+j}(mi+j)$. Viewing $mi+j$ as the base $m$ expansion of a positive integer, we see that $f$ is injective. Furthermore, $|f^+|=4$ since
$$f(v_{i,j})+f(v_{i+1,j})=(-1)^{i+j}(mi+j)+(-1)^{i+1+j}(m(i+1)+j)=\pm m$$
and
$$f(v_{i,j})+f(v_{i,j+1})=(-1)^{i+j}(mi+j)+(-1)^{i+j+1}(mi+j+1)=\pm1.$$
Hence, $s(G)\leq|f^+|=4$. Figures~\ref{fig:ladder} and \ref{fig:grid} illustrate the vertex labelings $f$ on $L_{6\times2}$ and $L_{6\times3}$, respectively.
\begin{figure}[H]
\centering
\begin{minipage}{0.45\linewidth}
\centering
\begin{tikzpicture}[scale=1.3]
\foreach\i in{0,1,2,3,4,5}{\foreach\j in{0,1}{\filldraw(\i,\j)circle(0.1/1.3);}}\draw(0,0)grid(5,1);\node[below]at(0,0){$0$};\node[below]at(1,0){$2$};\node[below]at(2,0){$-2$};\node[below]at(3,0){$4$};\node[below]at(4,0){$-4$};\node[below]at(5,0){$6$};\node[above]at(0,1){$1$};\node[above]at(1,1){$-1$};\node[above]at(2,1){$3$};\node[above]at(3,1){$-3$};\node[above]at(4,1){$5$};\node[above]at(5,1){$-5$};
\end{tikzpicture}
\caption{Vertex labeling $f$ on $L_{6\times2}$}
\label{fig:ladder}
\end{minipage}\qquad
\begin{minipage}{0.45\linewidth}
\centering
\begin{tikzpicture}[scale=1.3]
\foreach\i in{0,1,2,3,4,5}{\foreach\j in{0,1,2}{\filldraw(\i,\j)circle(0.1/1.3);}}\draw(0,0)grid(5,2);\node[below]at(0,0){$0$};\node[below]at(1,0){$-3$};\node[below]at(2,0){$6$};\node[below]at(3,0){$-9$};\node[below]at(4,0){$12$};\node[below]at(5,0){$-15$};\node[above left]at(0,1){$-1$};\node[above left]at(1,1){$4$};\node[above left]at(2,1){$-7$};\node[above left]at(3,1){$10$};\node[above left]at(4,1){$-13$};\node[above left]at(5,1){$16$};\node[above]at(0,2){$2$};\node[above]at(1,2){$-5$};\node[above]at(2,2){$8$};\node[above]at(3,2){$-11$};\node[above]at(4,2){$14$};\node[above]at(5,2){$-17$};
\end{tikzpicture}
\caption{Vertex labeling $f$ on $L_{6\times3}$}
\label{fig:grid}
\end{minipage}
\end{figure}
\end{proof}

Since a \emph{prism graph} $\Pi_n$ is a ladder graph $L_{n\times 2}$ with the two additional edges $v_{0,0}v_{n-1,0}$ and $v_{0,1}v_{n-1,1}$, the following corollary follows immediately from Theorem~\ref{thm:grids}.

\begin{corollary}\label{cor:prism}
For every prism graph $\Pi_n$, $s(\Pi_n)\leq5$.
\end{corollary}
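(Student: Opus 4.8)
The plan is to reuse, essentially verbatim, the vertex labeling constructed for the ladder graph in the proof of Theorem~\ref{thm:grids}, and to observe that passing from $L_{n\times 2}$ to $\Pi_n$ only adds two edges, hence at most two new sum labels. Since $\Pi_n$ has the same vertex set as $L_{n\times 2}$ together with the identical edge set plus the two ``wrap-around'' edges $v_{0,0}v_{n-1,0}$ and $v_{0,1}v_{n-1,1}$, I would take $f$ to be exactly the labeling
$$f(v_{i,j})=\begin{cases}
-i&\text{if }i\text{ is even and }j=0;\\
i+1&\text{if }i\text{ is odd and }j=0;\\
i+1&\text{if }i\text{ is even and }j=1;\\
-i&\text{if }i\text{ is odd and }j=1,
\end{cases}$$
which is injective and, by Theorem~\ref{thm:grids}, satisfies $f^+(E(L_{n\times 2}))=\{0,1,2\}$.

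The key step is then a one-line counting observation. Viewing $f$ as a vertex labeling of $\Pi_n$, every edge of $\Pi_n$ that already belongs to $L_{n\times 2}$ still receives a label in $\{0,1,2\}$, because $f^+$ on a given edge depends only on the labels of its two endpoints. The only edges of $\Pi_n$ not present in $L_{n\times 2}$ are $v_{0,0}v_{n-1,0}$ and $v_{0,1}v_{n-1,1}$, which contribute the two additional values $f(v_{0,0})+f(v_{n-1,0})$ and $f(v_{0,1})+f(v_{n-1,1})$. Therefore
$$f^+(E(\Pi_n))\subseteq\{0,1,2\}\cup\bigl\{f(v_{0,0})+f(v_{n-1,0}),\,f(v_{0,1})+f(v_{n-1,1})\bigr\},$$
so $|f^+|\leq 3+2=5$, whence $s(\Pi_n)\leq|f^+|\leq 5$.

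There is essentially no serious obstacle here, which is precisely why the statement is phrased as a corollary; the only thing to be careful about is confirming that the two wrap-around edges are genuinely new edges (for $n\geq 3$ they are, since they are not incident ladder rungs or consecutive ladder rails), so that the edge set of $\Pi_n$ really is the ladder's edge set together with exactly two extra edges. One could, if desired, compute the two boundary sums explicitly by splitting on the parity of $n-1$ to exhibit that they fall outside $\{0,1,2\}$, showing $|f^+|=5$ in general; but this is unnecessary for the inequality $s(\Pi_n)\leq 5$, since any two extra edges can enlarge the range by at most two elements regardless of their actual values.
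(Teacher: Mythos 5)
Your proposal is correct and is exactly the argument the paper intends: reuse the ladder labeling from Theorem~\ref{thm:grids}, whose edge sums lie in $\{0,1,2\}$, and note that the two wrap-around edges can contribute at most two new values, giving $|f^+|\leq 5$. The paper leaves this implicit by stating the corollary ``follows immediately,'' so your write-up simply makes the same one-line counting step explicit.
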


We extend the vertex labeling of rectangular grids given in the proof of Theorem~\ref{thm:grids} to obtain the next corollary.

\begin{corollary}\label{cor:highergrids}
Let $G$ be a $d$-dimensional rectangular grid with maximum degree $2d$. Then $s(G)=2d$.
\end{corollary}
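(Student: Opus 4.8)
The plan is to mirror the two-dimensional construction from the proof of Theorem~\ref{thm:grids}, upgrading the base-$m$ positional encoding to a $d$-digit one. First I would fix notation: regard the $d$-dimensional grid $G$ as the Cartesian product of paths on $n_1,n_2,\dots,n_d$ vertices, so that $V(G)=\{v_{i_1,\dots,i_d}:0\le i_k\le n_k-1\}$ and two vertices are adjacent exactly when their index tuples differ by $1$ in a single coordinate. The hypothesis $\Delta(G)=2d$ is equivalent to $n_k\ge3$ for every $k$, since a vertex attains degree $2d$ only when it is interior in all $d$ directions. Granting this, the lower bound is immediate from the maximum-degree bound on the sum index: $s(G)\ge\Delta(G)=2d$.

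For the upper bound I would exhibit a labeling using at most $2d$ edge labels. Set $B=\max\{n_1,\dots,n_d\}$, put place values $c_k=B^{k-1}$, and define the positional value $\phi(v_{i_1,\dots,i_d})=\sum_{k=1}^{d}c_k i_k$ together with the parity $\sigma(v_{i_1,\dots,i_d})=\sum_{k=1}^{d}i_k$. The candidate labeling is
$$f(v_{i_1,\dots,i_d})=(-1)^{\sigma}\,\phi.$$
Because every coordinate satisfies $0\le i_k\le n_k-1\le B-1$, the quantity $\phi$ is exactly the base-$B$ numeral with digits $i_1,\dots,i_d$; hence $\phi$ is injective on $V(G)$, is nonnegative, and vanishes only at the origin. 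I would then verify injectivity of $f$ itself: if $f(u)=f(v)$ then $|f(u)|=|f(v)|$ forces $\phi(u)=\phi(v)$, so $u=v$ by injectivity of $\phi$. For an edge that changes coordinate $k$ by one unit, the parity flips and $\phi$ changes by exactly $c_k$, so a short sign computation (telescoping $(-1)^{\sigma}\phi+(-1)^{\sigma+1}(\phi\pm c_k)$) gives $f^{+}=\pm c_k=\pm B^{k-1}$, independent of the particular edge. Consequently $f^{+}(E(G))\subseteq\{\pm B^{k-1}:1\le k\le d\}$, a set of $2d$ distinct integers, whence $|f^{+}|\le 2d$ and $s(G)\le 2d$. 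Combined with the lower bound this yields $s(G)=2d$.

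The calculations here are routine; the two places that genuinely require care, and where a careless generalization could silently fail, are the following. First, one must confirm that the degree hypothesis really forces each side length to be at least $3$, so that the lower bound is $2d$ rather than something smaller. Second, the alternating sign could a priori merge two vertices of opposite parity, and this is excluded only because $\phi$ is nonnegative with a unique zero; this is exactly why I would use the bipartition parity $\sigma$ together with a strictly positive, injective, nonnegative encoding rather than an arbitrary linear form. I expect these two verifications to be the main obstacle, with everything else following the template of Theorem~\ref{thm:grids}.
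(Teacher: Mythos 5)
Your proposal is correct and follows essentially the same route as the paper: an alternating-sign, base-$B$ positional labeling (the paper uses base $n_1=\max_k n_k$, you use $B=\max_k n_k$, which is the same thing), injectivity via the positional encoding, and the telescoping sign computation giving edge sums $\pm B^{k-1}$, combined with the lower bound $s(G)\ge\Delta(G)=2d$. No substantive differences to report.
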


\begin{proof}
Define the vertex set of $G$ as
$$\{v_\i:\i=(i_1,i_2,\dotsc,i_d),0\leq i_1\leq n_1-1,0\leq i_2\leq n_2-2,\dotsc,0\leq i_d\leq n_d-1\}$$
for some integers $n_1\geq n_2\geq\dotsb\geq n_d\geq3$. To extend the vertex labeling in the proof of Theorem \ref{thm:grids} to higher dimensions, let
$$f(v_\i) = (-1)^{i_1+i_2+\dotsb+i_d}(n_1^{d-1}i_1+n_1^{d-2}i_2+\dotsb+n_1i_{d-1}+i_d).$$
Again, it is easy to see that $f$ is injective by viewing the image of $f$ as the base $n_1$ expansion of an integer.  To see that $|f^+| = 2d$, note that for any two adjacent vertices $v_\i$ and $v_{\i'}$, where $\i=(i_1,i_2,\dotsc,i_d)$ and  $\i'=(i_1,i_2,\dotsc,i_j\pm1,\dotsc,i_d)$ for some $1\leq j\leq d$, 
\begin{align*}
|f(v_\i)+ f(v_{\i'})|&=|(n_1^{d-1}i_1+n_1^{d-2}i_2+\dotsb+n_1i_{d-1}+i_d)\\
&\hspace{20pt}-(n_1^{d-1}i_1+n_1^{d-2}i_2+\dotsb+n_1^{d-j}(i_j\pm1)+\dotsb+n_1i_{d-1}+i_d)|\\
&=n_1^{d-j}.
\end{align*}
This proves that $s(G)\leq2d$, and our proof is complete by noting that $s(G)\geq\Delta(G)=2d$.
\end{proof}

After studying the sum index of rectangular grids, the last result of this subsection is on triangular grids. A \emph{triangular grid} $T_n$ is a graph with the vertex set $\{v_{i,j}:0\leq i\leq n-1,0\leq j\leq i\}$ and the edge set $\{v_{i,j}v_{i',j'}:|i-i'|+|j-j'|=1\text{ or }(i-i')(j-j')=1\}$. For example, Figure~\ref{fig:triangular} shows the triangular grid $T_3$.
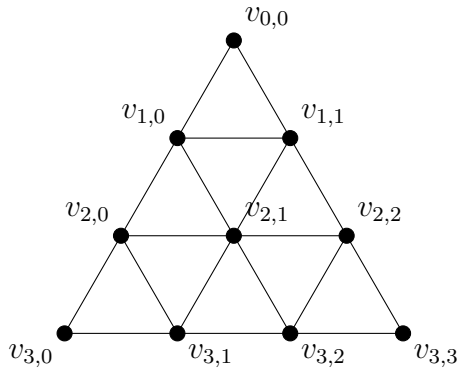
\begin{figure}[H]
\centering
\begin{tikzpicture}[scale=1.5]
\foreach\r in{0.1/1.5}{\foreach\i in{(0,0),(1,0),(2,0),(3,0),(0.5,{sin(60)}),(1.5,{sin(60)}),(2.5,{sin(60)}),(1,{2*sin(60)}),(2,{2*sin(60)}),(1.5,{3*sin(60)})}{\filldraw\i circle(\r);}}\draw(0,0)--(3,0);\draw(0.5,{sin(60)})--(2.5,{sin(60)});\draw(1,{2*sin(60)})--(2,{2*sin(60)});\draw(0,0)--(1.5,{3*sin(60)});\draw(1,0)--(2,{2*sin(60)});\draw(2,0)--(2.5,{sin(60)});\draw(3,0)--(1.5,{3*sin(60)});\draw(2,0)--(1,{2*sin(60)});\draw(1,0)--(0.5,{sin(60)});\node[above right]at(1.5,{3*sin(60)}){$v_{0,0}$};\node[above left]at(1,{2*sin(60)}){$v_{1,0}$};\node[above right]at(2,{2*sin(60)}){$v_{1,1}$};\node[above left]at(0.5,{sin(60)}){$v_{2,0}$};\node[above right]at(1.5,{sin(60)}){$v_{2,1}$};\node[above right]at(2.5,{sin(60)}){$v_{2,2}$};
\node[below left]at(0,0){$v_{3,0}$};\node[below right]at(1,0){$v_{3,1}$};\node[below right]at(2,0){$v_{3,2}$};\node[below right]at(3,0){$v_{3,3}$};
\end{tikzpicture}
\caption{Triangular grid $T_3$}
\label{fig:triangular}
\end{figure}

As we can see, there are $n$ rows of triangles in $T_n$, where the $i$-th row contains $i$ upward-facing triangles and $i-1$ downward-facing triangles. Note that in the $i$-th row, there are at least $\frac{i}{2}$ vertex-disjoint upward-facing triangles. Hence, by considering every other row of $T_n$ starting from the last row, there are at least
$$\sum_{\substack{1\leq i\leq n\\n-i\text{ is even}}}\left\lceil\frac{i}{2}\right\rceil=1+2+\dotsb+\left\lceil\frac{n}{2}\right\rceil=\frac{\left\lceil\frac{n}{2}\right\rceil\left(\left\lceil\frac{n}{2}\right\rceil+1\right)}{2}\geq\frac{n^2}{8}$$
vertex-disjoint triangles in $T_n$. By Theorem~\ref{thm:disjointtriangles}, we have the following corollary.

\begin{corollary}
The sum index of triangular grids grows with $n$. To be more precise, $s(T_n)=\Omega(n^{2/3})$.
\end{corollary}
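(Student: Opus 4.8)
The plan is to reduce the statement to Theorem~\ref{thm:disjointtriangles} via a general monotonicity property of the sum index under passing to subgraphs. The discussion preceding the corollary has already done the combinatorial heavy lifting by exhibiting a collection of $m\geq n^2/8$ pairwise vertex-disjoint triangles inside $T_n$. I would let $H$ denote the subgraph of $T_n$ consisting of the vertices of these triangles together with their three edges each, so that $H$ is a disjoint union of exactly $m$ triangles, to which Theorem~\ref{thm:disjointtriangles} applies directly.

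The first key step is to establish that $s(H)\leq s(T_n)$. This follows from a restriction argument: if $f$ is a sum index labeling of $T_n$, then its restriction to $V(H)$ is still injective, hence a valid vertex labeling of $H$, and the induced edge labels satisfy $f^+(E(H))\subseteq f^+(E(T_n))$. Consequently $s(H)\leq|f^+(E(H))|\leq|f^+(E(T_n))|=s(T_n)$. (More generally, this shows the sum index is monotone under taking subgraphs.) The second step is to invoke Theorem~\ref{thm:disjointtriangles}: since $H$ is a disjoint union of $m$ triangles, $s(H)=k$, where $k$ is the least positive integer with $\binom{k}{3}\geq m$; combining with $m\geq n^2/8$ gives $\binom{k}{3}\geq n^2/8$.

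The final step is a routine asymptotic estimate. Using $\binom{k}{3}\leq k^3/6$, the inequality $\binom{k}{3}\geq n^2/8$ forces $k^3\geq 3n^2/4$, so $k\geq(3/4)^{1/3}n^{2/3}$, and therefore $s(T_n)\geq s(H)=k=\Omega(n^{2/3})$. I do not expect a serious obstacle; the only point requiring care is the monotonicity step, where one must confirm both that restricting an injective labeling preserves injectivity and that the range of induced edge labels can only shrink under restriction to a subgraph. Both are immediate, and everything else is elementary once the vertex-disjoint triangle count is in hand.
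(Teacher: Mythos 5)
Your proposal is correct and is exactly the argument the paper intends: the paper derives the corollary directly from the vertex-disjoint triangle count and Theorem~\ref{thm:disjointtriangles}, leaving the subgraph-monotonicity of the sum index implicit. Your explicit restriction argument for $s(H)\leq s(T_n)$ and the asymptotic estimate via $\binom{k}{3}\leq k^3/6$ are both sound.
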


\subsection{Constructing graphs with a prescribed sum index}\label{subsec:construct}

Now that we have found the sum index of various families of graphs, we ask the converse question: if we know the sum index of a graph, what can we say about the graph? For certain values $k$, we can characterize all graphs of sum index $k$.  It should be noted that isolated vertices do not affect the sum index, so we will ignore them for the purpose of this subsection.

\begin{theorem}\label{thm:s=1,2}
\begin{enumerate}[$(a)$]
\item\label{item:s=1} If $s(G)=1$, then $G$ is a disjoint union of copies of $K_2$.
\item\label{item:s=2} If $s(G)=2$, then $G$ is a disjoint union of paths.
\end{enumerate}
\end{theorem}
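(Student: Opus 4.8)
The plan is to use the chromatic-index lower bound from Theorem~\ref{thm:chromindex} to pin down the maximum degree in each case, and then a short structural argument to identify the graph.

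For part~(a), I would first observe that $1 = s(G) \geq \chi'(G)$ by Theorem~\ref{thm:chromindex}, and since $G$ is nonempty we also have $\chi'(G) \geq 1$, so $\chi'(G) = 1$. A graph has chromatic index $1$ exactly when no two edges are incident, i.e., when it is a matching; hence, after discarding isolated vertices, $G$ is a disjoint union of copies of $K_2$. Alternatively, one can argue directly: if $f$ is a sum index labeling with $f^+ \equiv c$ and some vertex $u$ had two neighbors $v,w$, then $f(u)+f(v) = c = f(u)+f(w)$ would force $f(v)=f(w)$, contradicting injectivity; thus $\Delta(G) \leq 1$ and the same conclusion follows.

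For part~(b), the same bound gives $2 = s(G) \geq \chi'(G) \geq \Delta(G)$, so $\Delta(G) \leq 2$. Every connected graph with maximum degree at most $2$ is a path or a cycle, so each connected component of $G$ (ignoring isolated vertices) is a path or a cycle, and it remains only to rule out cycle components. Here I would use the key monotonicity observation that $s(H) \leq s(G)$ whenever $H$ is a subgraph of $G$ with at least one edge: restricting a sum index labeling $f$ of $G$ to $V(H)$ keeps it injective, and for each $uv \in E(H) \subseteq E(G)$ the induced label is still $f(u)+f(v)$, so $f^+(E(H))$ is a subset of $f^+(E(G))$ and therefore $s(H) \leq |f^+(E(H))| \leq |f^+| = s(G)$.

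Applying this with $H$ a cycle component $C_n$ gives $s(C_n) \leq s(G) = 2$, which contradicts $s(C_n) = 3$ from Theorem~\ref{thm:sumcycles}. Hence no component is a cycle, so every component is a path and $G$ is a disjoint union of paths. The only real subtlety, and the step I would state carefully, is the monotonicity of the sum index under passing to subgraphs; everything else is either an immediate consequence of Theorem~\ref{thm:chromindex} or the standard classification of connected graphs with $\Delta \leq 2$.
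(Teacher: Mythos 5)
Your proof is correct and follows essentially the same route as the paper's: bound $\Delta(G)$ using the lower bound on $s(G)$, classify the components as paths or cycles, and rule out cycle components via $s(C_n)=3$ from Theorem~\ref{thm:sumcycles}. The only difference is that you state and verify the subgraph-monotonicity of the sum index explicitly, a step the paper's proof uses implicitly when it asserts that any graph containing a cycle has sum index at least $3$.
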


\begin{proof}\begin{itemize}
\item[$(\ref{item:s=1})$] This statement follows from the fact that $s(G)\geq\Delta(G)$.
\item[$(\ref{item:s=2})$] Again, from the fact that $s(G)\geq\Delta(G)$, if $s(G)=2$, then $G$ is a disjoint union of cycles and paths. However, Theorem~\ref{thm:sumcycles} implies that any graph that contains a cycle has sum index at least $3$. Therefore, $G$ is a disjoint union of paths.
\end{itemize}
\end{proof}

\begin{theorem}\label{thm:s=2n-3,2n-4}
\label{sum index of almost complete}
Let $G$ be a graph with $n$ vertices.
\begin{enumerate}[$(a)$]
\item\label{item:s=2n-3} The sum index $s(G)=2n-3$ if and only if $G=K_n$.
\item\label{item:s=2n-4} The sum index $s(G)=2n-4$ if and only if $G=K_n\setminus\{e\}$ for some edge $e$.
\end{enumerate}
\end{theorem}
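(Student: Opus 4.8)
The plan is to reduce the entire theorem to a single exact computation, $s(K_n\setminus\{e\})=2n-4$, together with one monotonicity observation. First I would record that deleting edges cannot increase the sum index: if $H$ is a spanning subgraph of $G'$ and $f$ is a sum index labeling of $G'$, then $f$ is still an injective labeling of $H$ and $f^+(E(H))\subseteq f^+(E(G'))$, so $s(H)\leq|f^+(E(H))|\leq s(G')$. (This is exactly what underlies the bound $s(G)\leq 2n-3$ quoted in the introduction.) Granting the computation $s(K_n\setminus\{e\})=2n-4$, both directions of part $(a)$ and the reverse direction of part $(b)$ follow at once: the reverse direction of $(a)$ is the known value $s(K_n)=2n-3$; for the forward direction, any $G\neq K_n$ on $n$ vertices omits some edge $uv$, hence is a spanning subgraph of $K_n\setminus\{uv\}$, so $s(G)\leq 2n-4<2n-3$, and the contrapositive gives $G=K_n$; and the reverse direction of $(b)$ is the equality itself.

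To establish $s(K_n\setminus\{e\})=2n-4$ I would prove the two bounds separately. For the upper bound, label the two endpoints of the missing edge $e$ by $1$ and $2$ and the remaining $n-2$ vertices by $3,4,\dots,n$; since in $\{1,\dots,n\}$ the value $3=1+2$ is attained only by the deleted pair, the surviving edge sums are exactly $\{4,5,\dots,2n-1\}$, a set of size $2n-4$. For the lower bound I would invoke the standard ``staircase'': for any injective labeling with sorted values $a_1<a_2<\cdots<a_n$, the sums $a_1+a_2<a_1+a_3<\cdots<a_1+a_n<a_2+a_n<\cdots<a_{n-1}+a_n$ are strictly increasing, hence pairwise distinct, and arise from $2n-3$ distinct pairs of vertices. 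Deleting the single edge $e$ removes at most one of these pairs, so at least $2n-4$ of these distinct sums survive among the edges of $K_n\setminus\{e\}$; thus $|f^+|\geq 2n-4$ for every labeling, giving $s(K_n\setminus\{e\})\geq 2n-4$.

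Only the forward direction of part $(b)$ remains. Suppose $s(G)=2n-4$; by part $(a)$ we have $G\neq K_n$, so $G$ omits at least one edge. If $G$ omitted two distinct edges $e_1,e_2$, then $G$ would be a spanning subgraph of $K_n\setminus\{e_1,e_2\}$, and I would exhibit a labeling of the latter using only $2n-5$ edge sums, contradicting $s(G)=2n-4$ via monotonicity. Two cases arise. If $e_1,e_2$ share a vertex $b$ with other endpoints $a,c$, assign $b\mapsto 1$, $a\mapsto 2$, $c\mapsto 3$ and $4,\dots,n$ to the rest: the deleted sums $3=1+2$ and $4=1+3$ are each uniquely attained, leaving $\{5,\dots,2n-1\}$. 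If $e_1=ab$ and $e_2=cd$ are disjoint (which forces $n\geq 4$), assign $a,b\mapsto 1,2$ and $c,d\mapsto n-1,n$: the deleted sums $3$ and $2n-1$ are each uniquely attained, leaving $\{4,\dots,2n-2\}$. Each case leaves exactly $2n-5$ distinct sums, so $G$ must omit exactly one edge, i.e.\ $G=K_n\setminus\{e\}$. The hard part will be the lower bound: everything hinges on the staircase supplying $2n-3$ distinct, individually identified sums, so that deleting $t$ edges can cost at most $t$ of them; the remaining case analysis is bookkeeping, modulo verifying the small case $n=3$ (where $K_3\setminus\{e\}$ is the path $P_3$ with $s=2$) and noting that $n=2$ is degenerate since $K_2\setminus\{e\}$ is edgeless.
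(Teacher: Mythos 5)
Your proposal is correct and follows essentially the same route as the paper: the same labelings (endpoints of missing edges sent to $1,2$ or $1,2,3$ or $1,2,n-1,n$ with the rest filling $\{1,\dots,n\}$) for the upper bounds, and the same two-case analysis for the forward direction of part $(b)$. The only cosmetic difference is that you prove the lower bound $s(K_n\setminus\{e\})\geq 2n-4$ self-containedly via the staircase of $2n-3$ increasing sums, whereas the paper gets the same conclusion by citing $s(K_n)=2n-3$ and observing that deleting one edge removes at most one distinct sum.
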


\begin{proof}
\begin{itemize}
\item[$(\ref{item:s=2n-3})$] As mentioned in Section~\ref{sec:intro}, $s(K_n)=2n-3$. If $G\neq K_n$, then let $v_1$ and $v_2$ be two nonadjacent vertices in $G$. Define a vertex labeling $f:V(G)\to\{1,2,\dotsc,n\}$ such that $f(v_1)=1$ and $f(v_2)=2$. Then $f^+(E(G))\subseteq\{4,5,\dotsc,2n-1\}$, and $|f^+|\leq2n-4$.  Therefore, $s(G)\leq 2n-4$, contradicting that $s(G)=2n-3$.

\item[$(\ref{item:s=2n-4})$] If $G=K_n\setminus\{e\}$, then $s(G)\leq2n-4$ from the proof of part $(\ref{item:s=2n-3})$. Since $s(K_n)=2n-3$, every vertex labeling $f$ on $K_n$ satisfies $|f^+(E(K_n))|\geq2n-3$, thus $|f^+(E(K_n\setminus\{e\}))|\geq2n-4$. Therefore, $s(G)=2n-4$.

If $s(G)=2n-4$, then part $(\ref{item:s=2n-3})$ implies that $G$ is a proper subgraph of $K_n$. For the sake of contradiction, assume that $G$ is a subgraph of $K_n\setminus\{e_1,e_2\}$ for two distinct edges $e_1$ and $e_2$.

If $e_1$ and $e_2$ share a common vertex, then let $e_1=v_1v_2$ and $e_2=v_1v_3$. Define a vertex labeling $f:V(G)\to\{1,2,\dotsc,n\}$ such that $f(v_1)=1$, $f(v_2)=2$, and $f(v_3)=3$. As a result, $f^+(E(G))\subseteq\{5,6,\dotsc,2n-1\}$, and $|f^+|\leq2n-5$. Similarly, if $e_1$ and $e_2$ do not share a common vertex, then let $e_1=v_1v_2$ and $e_2=v_3v_4$. Define a vertex labeling $f:V(G)\to\{1,2,\dotsc,n\}$ such that $f(v_1)=1$, $f(v_2)=2$, $f(v_3)=n-1$, and $f(v_4)=n$. As a result, $f^+(E(G))\subseteq\{4,5,\dotsc,2n-2\}$, and $|f^+|\leq2n-5$. In both cases, $s(G)\leq2n-5$, contradicting that $s(G)=2n-4$. 
\end{itemize}
\end{proof}

Using Theorem~\ref{thm:s=2n-3,2n-4}, we prove that any sum index between $2$ and $2n-3$ is attainable by a connected graph $G$ with $n$ vertices.

\begin{theorem}
Let $n$ and $k$ be positive integers such that $2\leq k\leq 2n-3$. Then there exists a connected graph $G$ with $n$ vertices such that $s(G)=k$.
\end{theorem}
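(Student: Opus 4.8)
The plan is to split into two families according to the parity of $k$. In each case I take a complete or almost-complete graph as a ``core'' whose sum index is already known, and then append a pendant path to pad the vertex count up to $n$ without changing the number of distinct edge sums. Concretely, for odd $k$ with $3\le k\le 2n-3$ I set $m=(k+3)/2$ (so $3\le m\le n$) and let $G$ be $K_m$ with a pendant path on the remaining $n-m$ vertices attached at one clique vertex; for even $k$ with $2\le k\le 2n-4$ I set $m=(k+4)/2$ (again $3\le m\le n$) and let $G$ be $K_m\setminus\{e\}$ with a pendant path attached at one vertex. The degenerate cases $m=n$ recover $K_n$ and $K_n\setminus\{e\}$ exactly, and since $2n-3$ is odd and $2n-4$ is even, these two families together cover every integer $k$ in $[2,2n-3]$.

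The lower bounds come from an easy monotonicity observation that I would record first: if $H$ is a subgraph of $G$, then $s(H)\le s(G)$, because restricting a sum index labeling of $G$ to $V(H)$ stays injective and yields $f^+(E(H))\subseteq f^+(E(G))$, whence $|f^+_H|\le s(G)$. Applying this with $H=K_m$ (resp.\ $H=K_m\setminus\{e\}$) together with $s(K_m)=2n-3|_{n=m}=2m-3$ and $s(K_m\setminus\{e\})=2m-4$ from Theorem~\ref{thm:s=2n-3,2n-4} gives $s(G)\ge k$ immediately. Note that the maximum-degree bound alone is \emph{not} enough here, since the attachment vertex has degree only about $m$, far below $2m-3$; the clique subgraph is what forces the lower bound.

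For the matching upper bound I label the core by $0,1,\dots,m-1$, so that $f^+$ on the core is exactly $\{1,2,\dots,2m-3\}$ in the odd case and $\{2,\dots,2m-3\}$ in the even case (removing the edge between the vertices labeled $0$ and $1$ deletes precisely the sum $1$), each of size $k$. I then attach the path at the vertex labeled $0$ and label its vertices $w_1,\dots,w_{n-m}$ by a zigzag using only the two core sums $2m-3$ and $2m-4$: set $f(w_i)=2m-3+\tfrac{i-1}{2}$ for odd $i$ and $f(w_i)=-\tfrac{i}{2}$ for even $i$. A direct check shows each path edge sums to $2m-4$ or $2m-3$ and the first edge from the vertex labeled $0$ sums to $2m-3$, so no new edge label is ever created; meanwhile the odd-indexed path labels increase from $2m-3\ge m$ to $+\infty$ and the even-indexed ones decrease through the negative integers, so all labels are distinct from one another and from the core labels $\{0,\dots,m-1\}$. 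Hence $|f^+|=k$, giving $s(G)\le k$ and therefore $s(G)=k$.

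The step I expect to be the main obstacle is exactly this last verification: arranging a single explicit path labeling that is simultaneously injective, disjoint from the core labels, and confined to sums already realized by the core, and doing so uniformly for paths of arbitrary length. The two-sum zigzag above is engineered so that one arithmetic progression of labels marches to $+\infty$ and the other to $-\infty$ — this is what makes disjointness and injectivity transparent while pinning all newly created edge sums to a fixed two-element subset of the core's sum set.
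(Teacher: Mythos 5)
Your proposal is correct and follows essentially the same route as the paper: a clique $K_m$ (odd $k$) or $K_m\setminus\{e\}$ (even $k$) as the core, whose sum index forces the lower bound via Theorem~\ref{thm:s=2n-3,2n-4}, padded by a pendant path labeled by a two-way zigzag whose edge sums stay inside the core's sum set. The only differences are cosmetic (choice of core labels $0,\dots,m-1$ versus $1,\dots,\ell$, and which clique edge is deleted), plus your explicit statement of the subgraph monotonicity $s(H)\le s(G)$, which the paper leaves implicit.
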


\begin{proof}
If $k=2$, then we are done by considering $G$ to be a path on $n$ vertices. For the rest of this proof, we assume that $k\geq3$.

Let the vertex set of $G$ be $V(G)=\{v_1,v_2,\dotsc,v_n\}$. If $k$ is odd, then let $\ell$ be an integer such that $2\ell-3=k$, and let the edge set of $G$ be
$$E(G)=\{v_iv_j:1\leq i<j\leq\ell\}\cup\{v_1v_{\ell+1},v_iv_{i+1}:\ell+1\leq i\leq n-1\};$$
if $k$ is even, then let $\ell$ be an integer such that $2\ell-4=k$, and let the edge set of $G$ be
$$E(G)=\{v_iv_j:1\leq i<j\leq\ell\text{ and }i<\ell-1\}\cup\{v_1v_{\ell+1},v_iv_{i+1}:\ell+1\leq i\leq n-1\}.$$

By Theorem~\ref{thm:s=2n-3,2n-4}, if $k$ is odd, then $s(G)\geq2\ell-3=k$; if $k$ is even, then $s(G)\geq2\ell-4=k$. It remains to show that $s(G)\leq k$ by defining a vertex labeling $f:V(G)\to\Z$ such that $|f^+|=k$.

Let $f:V(G)\to\Z$ such that $f(v_i)=i$ for $1\leq i\leq\ell$ and
$$f(v_i)=\begin{cases}
\ell+\left\lceil\frac{i-\ell}{2}\right\rceil&\text{if $i-\ell$ is odd};\\
-\frac{i-\ell}{2}+1&\text{if $i-\ell$ is even}
\end{cases}$$
for $\ell+1\leq i\leq n$. 
Figures~\ref{fig:n=12s=7} and \ref{fig:n=12s=8} illustrate the vertex labelings $f$ on $G$ with $12$ vertices for $k=7$ and $k=8$, respectively. 
\begin{figure}[H]
\centering
\begin{minipage}{0.45\linewidth}
\centering
\begin{tikzpicture}[scale=1.3]
\foreach\i in{0,1,...,4}{\filldraw({cos(\i*72)},{sin(\i*72)})circle(0.1/1.3);}\foreach\i in{1,3,...,7}{\filldraw(1+0.5*\i,0.8)circle(0.1/1.3);}\foreach\i in{1,3,5}{\filldraw(1.5+0.5*\i,-0.8)circle(0.1/1.3);}\draw({cos(0*72)},{sin(0*72)})--({cos(1*72)},{sin(1*72)})--({cos(2*72)},{sin(2*72)})--({cos(3*72)},{sin(3*72)})--({cos(4*72)},{sin(4*72)})--({cos(0*72)},{sin(0*72)})--({cos(2*72)},{sin(2*72)})--({cos(4*72)},{sin(4*72)})--({cos(1*72)},{sin(1*72)})--({cos(3*72)},{sin(3*72)})--({cos(0*72)},{sin(0*72)})--(1.5,0.8)--(2,-0.8)--(2.5,0.8)--(3,-0.8)--(3.5,0.8)--(4,-0.8)--(4.5,0.8);\node[below right]at({cos(0*72)},{sin(0*72)}){$1$};\node[above left]at({cos(1*72)},{sin(1*72)}){$2$};\node[above left]at({cos(2*72)},{sin(2*72)}){$3$};\node[below left]at({cos(3*72)},{sin(3*72)}){$4$};\node[below left]at({cos(4*72)},{sin(4*72)}){$5$};\node[above right]at(1.5,0.8){$6$};\node[below right]at(2,-0.8){$0$};\node[above right]at(2.5,0.8){$7$};\node[below right]at(3,-0.8){$-1$};\node[above right]at(3.5,0.8){$8$};\node[below right]at(4,-0.8){$-2$};\node[above right]at(4.5,0.8){$9$};
\end{tikzpicture}
\caption{Vertex labeling $f$ on $G$ with $12$ vertices for $k=7$}
\label{fig:n=12s=7}
\end{minipage}\qquad
\begin{minipage}{0.45\linewidth}
\centering
\begin{tikzpicture}[scale=1.3]
\foreach\i in{0,1,...,4,5}{\filldraw({cos(\i*60)},{sin(\i*60)})circle(0.1/1.3);}\foreach\i in{1,3,5}{\filldraw(1+0.5*\i,0.8)circle(0.1/1.3);}\foreach\i in{1,3,5}{\filldraw(1.5+0.5*\i,-0.8)circle(0.1/1.3);}\draw({cos(0*60)},{sin(0*60)})--({cos(1*60)},{sin(1*60)})--({cos(2*60)},{sin(2*60)})--({cos(3*60)},{sin(3*60)})--({cos(4*60)},{sin(4*60)});\draw({cos(5*60)},{sin(5*60)})--({cos(1*60)},{sin(1*60)})--({cos(3*60)},{sin(3*60)})--({cos(5*60)},{sin(5*60)})--({cos(2*60)},{sin(2*60)})--({cos(4*60)},{sin(4*60)})--({cos(0*60)},{sin(0*60)})--({cos(2*60)},{sin(2*60)});\draw({cos(5*60)},{sin(5*60)})--({cos(0*60)},{sin(0*60)})--({cos(3*60)},{sin(3*60)});\draw({cos(1*60)},{sin(1*60)})--({cos(4*60)},{sin(4*60)});\draw({cos(0*60)},{sin(0*60)})--(1.5,0.8)--(2,-0.8)--(2.5,0.8)--(3,-0.8)--(3.5,0.8)--(4,-0.8);\node[below right]at({cos(0*60)},{sin(0*60)}){$1$};\node[above right]at({cos(1*60)},{sin(1*60)}){$2$};\node[above left]at({cos(2*60)},{sin(2*60)}){$3$};\node[above left]at({cos(3*60)},{sin(3*60)}){$4$};\node[below left]at({cos(4*60)},{sin(4*60)}){$5$};\node[below right]at({cos(5*60)},{sin(5*60)}){$6$};\node[above right]at(1.5,0.8){$7$};\node[below right]at(2,-0.8){$0$};\node[above right]at(2.5,0.8){$8$};\node[below right]at(3,-0.8){$-1$};\node[above right]at(3.5,0.8){$9$};\node[below right]at(4,-0.8){$-2$};
\end{tikzpicture}
\caption{Vertex labeling $f$ on $G$ with $12$ vertices for $k=8$}
\label{fig:n=12s=8}
\end{minipage}
\end{figure}
By construction, if $k$ is odd, then $f^+(\{v_iv_j:1\leq i<j\leq\ell\})=\{3,4,\dotsc,2\ell-1\}$; if $k$ is even, then $f^+(\{v_iv_j:1\leq i<j\leq\ell\text{ and }i<\ell-1\})=\{3,4,\dotsc,2\ell-2\}$. Furthermore, regardless whether $k$ is odd or even, $f^+(\{v_1v_{\ell+1},v_iv_{i+1}:\ell+1\leq i\leq n-1\})=\{\ell+1,\ell+2\}$. Note that $\ell+2\leq2\ell-1$ if $k\geq3$ is odd, and $\ell+2\leq2\ell-2$ if $k\geq3$ is even. Therefore, if $k$ is odd, $|f^+|=2\ell-3=k$, and if $k$ is even, $|f^+|=2\ell-4=k$.
\end{proof}

\subsection{Hyperdiamonds and the sum index of trees}\label{subsec:sumtrees}

In this section, we aim to study the sum index of trees.  In particular, we will show that there exists a graph $H_k$ so that every tree with sum index less than or equal to $k$ is a subgraph of $H_k$.  The graph $H_k$, referred to as a hyperdiamond, was introduced by Miller, Ryan, and Ryj\'a\v{c}ek \cite{mrr}.

\begin{definition}
Let $k$ be a positive integer. Let $\e_1,\e_2,\dotsc,\e_k$ be the standard basis vectors of $\Z^k$. For $i=1,2,\dotsc,k$, we define the map $\psi_i:\Z^k\to\Z^k$ by $\psi_i(\x)=\e_i-\x$. The \emph{hyperdiamond group} $\Gamma_k$ is the group generated by the maps $\psi_1,\psi_2,\dotsc,\psi_k$ under composition. The \emph{hyperdiamond graph} $H_k$ is the Cayley graph of $\Gamma_k$ with generating set $S=\{\psi_1,\psi_2,\dotsc,\psi_k\}$. In other words, the vertex set of $H_k$ is $\Gamma_k$, and for any $\phi,\phi'\in \Gamma_k$, there is a directed edge from $\phi$ to $\phi'$ if and only if $\phi\circ(\phi')^{-1}\in S$. Note that $S$ is closed under inverses since $\psi_i^{-1}=\psi_i$ for all $i=1,2,\dotsc,k$, so the Cayley graph $H_k$ is an undirected graph.
\end{definition}

The following lemma provides an easy way to compute the distance between two vertices $\phi$ and $\phi'$ in $H_k$.

\begin{lemma}\label{lem:phiphi'}
Let $\phi,\phi'\in \Gamma_k$, and let $\bz=(0,0,\dotsc,0)\in\Z^k$.
\begin{enumerate}[$(a)$]
\item\label{item:phi'=0} If $\phi'(\bz)=\bz$, then the distance between $\phi$ and $\phi'$ in $H_k$ is $\lVert\phi(\bz)\rVert$. Here, and throughout this paper, $\lVert\x\rVert=|x_1|+|x_2|+\dotsb+|x_k|$ for all $\x=(x_1,x_2,\dotsc,x_k)\in\Z^k$, which is the $1$-norm on $\Z^k$.
\item\label{item:phi=phi'} If $\phi(\bz)=\phi'(\bz)$, then $\phi=\phi'$.
\end{enumerate}
\end{lemma}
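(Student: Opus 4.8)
The plan is to exploit the fact that every element of $\Gamma_k$ is an affine map of a very rigid shape. First I would record that each generator $\psi_i(\x)=\e_i-\x$ is an affine involution, and that the class of maps $\x\mapsto\w+\epsilon\x$ with $\w\in\Z^k$ and $\epsilon\in\{1,-1\}$ is closed under composition; since it contains every $\psi_i$ (taking $\w=\e_i$, $\epsilon=-1$), every $\phi\in\Gamma_k$ has the form $\phi(\x)=\phi(\bz)+\epsilon_\phi\,\x$. The linear part $\epsilon_\phi$ defines a homomorphism $\Gamma_k\to\{1,-1\}$, and if $\phi$ is a word of length $n$ in the generators then $\epsilon_\phi=(-1)^n$ and $\phi(\bz)=\sum_{t=1}^{n}(-1)^{t-1}\e_{i_t}$. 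The crucial bookkeeping device is the coordinate sum $x_1+x_2+\dotsb+x_k$: applied to $\phi(\bz)$ it telescopes to $\sum_{t=1}^{n}(-1)^{t-1}$, which is $0$ when $n$ is even and $1$ when $n$ is odd. Thus the coordinate sum of $\phi(\bz)$ always lies in $\{0,1\}$, equalling $0$ exactly when $\epsilon_\phi=1$ and $1$ exactly when $\epsilon_\phi=-1$.

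For part $(\ref{item:phi=phi'})$ this does all the work: an affine map is determined by its linear part together with its value at $\bz$. If $\phi(\bz)=\phi'(\bz)$, then the two maps agree at $\bz$ and, having the same coordinate sum there, share the same $\epsilon$, hence the same linear part; therefore $\phi=\phi'$.

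For part $(\ref{item:phi'=0})$ I would first reduce to a statement about word length. Since $H_k$ is the Cayley graph of $\Gamma_k$ on $S$, the distance between $\phi$ and $\phi'$ equals the word length $\ell(g)$ of $g:=\phi\circ(\phi')^{-1}$, the least number of generators needed to express $g$ (word length is invariant under inversion since $S=S^{-1}$, so the precise left/right convention is irrelevant). Because $\phi'(\bz)=\bz$ forces $(\phi')^{-1}(\bz)=\bz$, we get $g(\bz)=\phi\bigl((\phi')^{-1}(\bz)\bigr)=\phi(\bz)$, so it suffices to prove the general identity $\ell(g)=\lVert g(\bz)\rVert$ for every $g\in\Gamma_k$. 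The lower bound $\ell(g)\geq\lVert g(\bz)\rVert$ is immediate: one generator sends a point $\y$ to $\e_i-\y$, changing the $1$-norm by at most $1$, so a word of length $n$ moves $\bz$ to a point of norm at most $n$.

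The main work, and the step I expect to be the real obstacle, is the reverse inequality $\ell(g)\leq\lVert g(\bz)\rVert$, which I would prove by induction on $\lVert g(\bz)\rVert$ by peeling off a single generator so that the norm strictly drops. Write $\w=g(\bz)$ and suppose $\w\neq\bz$. Its coordinate sum is $0$ or $1$ by the computation above, and in either case $\w$ has a strictly positive coordinate: if the sum is $1$ some coordinate is positive, and if the sum is $0$ but $\w\neq\bz$ then positive and negative coordinates coexist. Choosing $i$ with $w_i\geq1$ and setting $h=\psi_i\circ g$, one computes $h(\bz)=\e_i-\w$ and $\lVert\e_i-\w\rVert=|1-w_i|+(\lVert\w\rVert-|w_i|)=\lVert\w\rVert-1$, using $w_i\geq1$. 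Since $g=\psi_i\circ h$, the inductive hypothesis gives $\ell(g)\leq1+\ell(h)=\lVert\w\rVert$; the base case $\w=\bz$ forces $g=\mathrm{id}$ by part $(\ref{item:phi=phi'})$. Combining the two inequalities yields $\ell(g)=\lVert g(\bz)\rVert$, and specializing to $g=\phi\circ(\phi')^{-1}$ completes part $(\ref{item:phi'=0})$.
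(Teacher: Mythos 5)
Your proof is correct, but the decisive step is handled by a genuinely different mechanism than the paper's. For part $(\ref{item:phi'=0})$, both arguments obtain the lower bound $\lVert\phi(\bz)\rVert\leq r$ from the triangle inequality, but for the matching bound the paper argues by contradiction: if $\lVert\phi(\bz)\rVert<r$, two signed copies of the same basis vector must cancel, and a rearrangement identity (cancel $\e_{i_\ell},\e_{i_m}$ and reverse the intervening subword) shortens a supposedly geodesic word by two. You instead prove the upper bound directly and constructively, by induction on $\lVert g(\bz)\rVert$ for $g=\phi\circ(\phi')^{-1}$: the observation that the coordinate sum of $g(\bz)$ lies in $\{0,1\}$ guarantees a coordinate $w_i\geq1$ whenever $g(\bz)\neq\bz$, and applying $\psi_i$ then strictly decreases the $1$-norm. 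This greedy descent exhibits an explicit geodesic and avoids the paper's somewhat delicate cancellation-and-reversal computation (which the paper in any case only verifies on the orbit of $\bz$). Your part $(\ref{item:phi=phi'})$ is also argued differently: rather than splitting on the parity of the distance $r$ and comparing $1$-norm parities as the paper does, you note that every element of $\Gamma_k$ is an affine map $\x\mapsto\w+\epsilon\x$ with $\epsilon\in\{1,-1\}$ read off from the coordinate sum of $\w$, so the value at $\bz$ determines the map; this cleanly decouples part $(\ref{item:phi=phi'})$ from part $(\ref{item:phi'=0})$ and legitimately lets you invoke it in the base case of your induction with no circularity. The only cosmetic blemish is the line $\ell(g)\leq1+\ell(h)=\lVert\w\rVert$, where the final ``$=$'' should be ``$\leq$'' (equality only follows after combining with the lower bound); this does not affect the argument.
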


\begin{proof}
Let $r$ be the distance between $\phi$ and $\phi'$ in $H_k$. In other words,
\begin{equation}\label{eqn:phi}
\phi=\psi_{i_r}\circ\psi_{i_{r-1}}\circ\dotsb\circ\psi_{i_1}\circ\phi'
\end{equation}
for some $\psi_{i_1},\psi_{i_2},\dotsc,\psi_{i_r}\in S$. Hence, 
\begin{equation}\label{eqn:phi0}
\phi(\bz)=\e_{i_r}-\e_{i_{r-1}}+\e_{i_{r-2}}-\dotsb+(-1)^{r-1}\e_{i_1}+(-1)^r\phi'(\bz).
\end{equation}

\begin{itemize}
\item[$(\ref{item:phi'=0})$] If $\phi'(\bz)=\bz$, then by the triangle inequality of the $1$-norm on $\Z^k$, 
$$\lVert\phi(\bz)\rVert\leq\lVert\e_{i_r}\rVert+\lVert\e_{i_{r-1}}\rVert+\lVert\e_{i_{r-2}}\rVert+\dotsb+\lVert\e_{i_1}\rVert=r.$$
It is easy to see that $\lVert\phi(\bz)\rVert<r$ only if there exist integers $\ell<m$ of opposite parity such that $\e_{i_\ell}=\e_{i_m}$. In that case, by cancelling $\e_{i_\ell}$ and $\e_{i_m}$ and reversing the order of the terms between $\e_{i_\ell}$ and $\e_{i_m}$, we have
\begin{align*}
\phi(\bz)&=\e_{i_r}-\e_{i_{r-1}}+\e_{i_{r-2}}-\dotsb+(-1)^{r-m-1}\e_{i_{m+1}}\\
&\hspace{20pt}+(-1)^{r-m+1}\e_{i_{m-1}}+(-1)^{r-m+2}\e_{i_{m-2}}+\dotsb+(-1)^{r-\ell-1}\e_{i_{\ell+1}}\\
&\hspace{20pt}+(-1)^{r-\ell+1}\e_{i_{\ell-1}}+(-1)^{r-\ell+2}\e_{i_{\ell-2}}+\dotsb+(-1)^{r-1}\e_{i_1}+(-1)^r\phi'(\bz)\\
&=\e_{i_r}-\e_{i_{r-1}}+\e_{i_{r-2}}-\dotsb+(-1)^{r-m-1}\e_{i_{m+1}}\\
&\hspace{20pt}+\big((-1)^{r-\ell-1}\e_{i_{\ell+1}}+(-1)^{r-\ell-2}\e_{i_{\ell+2}}+\dotsb+(-1)^{r-m+1}\e_{i_{m-1}}\big)\\
&\hspace{20pt}+(-1)^{r-\ell+1}\e_{i_{\ell-1}}+(-1)^{r-\ell+2}\e_{i_{\ell-2}}+(-1)^{r-1}\e_{i_1}+(-1)^r\phi'(\bz)\\
&=\psi_{i_r}\circ\psi_{i_{r-1}}\circ\dotsb\circ\psi_{i_{m+1}}\circ(\psi_{i_{\ell+1}}\circ\psi_{i_{\ell+2}}\circ\dotsb\circ\psi_{i_{m-1}})\\
&\hspace{22pt}\circ\psi_{i_{\ell-1}}\circ\psi_{i_{\ell-2}}\circ\dotsb\circ\psi_{i_1}\circ\phi'(\bz).
\end{align*}
This shows that the distance between $\phi$ and $\phi'$ is at most $r-2$, contradicting the assumption that $r$ is the distance between $\phi$ and $\phi'$. Therefore, $\lVert\phi(\bz)\rVert=r$.
\item[$(\ref{item:phi=phi'})$] If $r$ is odd, then equation $\eqref{eqn:phi0}$ becomes
$$2\phi(\bz)=\e_{i_r}-\e_{i_{r-1}}+\e_{i_{r-2}}-\dotsb+(-1)^{r-1}\e_{i_1}.$$
Note that $\lVert2\phi(0)\rVert$ is even and $\lVert\e_{i_r}-\e_{i_{r-1}}+\e_{i_{r-2}}-\dotsb+(-1)^{r-1}\e_{i_1}\rVert$ is odd, which is a contradiction. If $r$ is even, then equation $\eqref{eqn:phi0}$ becomes
$$\bz=\e_{i_r}-\e_{i_{r-1}}+\e_{i_{r-2}}-\dotsb+(-1)^{r-1}\e_{i_1}.$$
Hence, for all $\x\in\Z^k$, equation $\eqref{eqn:phi}$ yields 
\begin{align*}
\phi(\x)&=\psi_{i_r}\circ\psi_{i_{r-1}}\circ\dotsb\circ\psi_{i_1}\circ\phi'(\x)\\
&=\e_{i_r}-\e_{i_{r-1}}+\e_{i_{r-2}}-\dotsb+(-1)^{r-1}\e_{i_1}+(-1)^r\phi'(\x)=\phi'(\x).
\end{align*}
In other words, $\phi=\phi'$.
\end{itemize}
\end{proof}

Miller, Ryan, and Ryj\'a\v{c}ek showed that if $G$ is a finite induced subgraph of $H_k$, then $\epsilon(G)\leq k$; also, if $G$ is a tree and $\epsilon(G)\leq k$, then $G$ is isomorphic to an induced subgraph of $H_k$. The following two theorems show the analogous statements for the sum index by replacing ``induced subgraph" with ``subgraph".

\begin{theorem}\label{thm:sumsubgraph}
Let $G$ be a finite subgraph of $H_k$. Then $s(G)\leq k$.
\end{theorem}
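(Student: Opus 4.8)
The plan is to identify each vertex $\phi\in\Gamma_k=V(H_k)$ with the integer vector $\phi(\bz)\in\Z^k$, which by Lemma~\ref{lem:phiphi'}(\ref{item:phi=phi'}) is an injection, and then build the vertex labeling $f$ as an integer-linear functional of these vectors. The whole point is that such a functional collapses the edge sums to very few values.

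First I would translate adjacency in $H_k$ into a statement about these vectors. Two vertices $\phi,\phi'$ are adjacent exactly when $\phi\circ(\phi')^{-1}=\psi_i$ for some $i$, and since each $\psi_i$ is an involution this is equivalent to $\phi=\psi_i\circ\phi'$, hence $\phi(\bz)=\e_i-\phi'(\bz)$, i.e. $\phi(\bz)+\phi'(\bz)=\e_i$. Thus every edge of $H_k$, and therefore every edge of the subgraph $G$, corresponds to a pair of vectors whose sum is one of the standard basis vectors $\e_1,\dotsc,\e_k$.

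Next I would fix a weight vector $\w=(w_1,\dotsc,w_k)\in\Z^k$ and set $f(v)=\sum_{j=1}^k w_j\,(\phi_v(\bz))_j$, the inner product of $\w$ with the vector associated to $v$. For any edge $uv$ of $G$ with $\phi_u(\bz)+\phi_v(\bz)=\e_i$, linearity gives $f^+(uv)=f(u)+f(v)=w_i$. Hence $f^+$ only takes values in $\{w_1,\dotsc,w_k\}$, so $|f^+|\leq k$ no matter which $\w$ is chosen; this immediately yields $s(G)\leq k$ once $f$ is shown to be a valid (injective) labeling.

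The remaining requirement is injectivity of $f$, which is where the choice of $\w$ matters and where the finiteness of $G$ is used. Since $G$ is finite, the set $\{\phi_v(\bz):v\in V(G)\}$ is a finite collection of distinct integer vectors (distinctness by Lemma~\ref{lem:phiphi'}(\ref{item:phi=phi'})), so their coordinates are bounded in absolute value by some $B$. Taking $w_j=(2B+1)^{j-1}$ realizes $f(v)$ as a base-$(2B+1)$ expansion with digits in $[-B,B]$: any nonzero difference $\phi_u(\bz)-\phi_v(\bz)$ has coordinates of absolute value at most $2B<2B+1$, so the usual digit-by-digit argument forces a nonzero inner product with $\w$, making $f$ injective. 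I do not expect a serious obstacle here; the crux is simply the observation that edges correspond to vector pairs summing to a basis vector, after which the linear functional automatically forces $|f^+|\leq k$, and the only point needing care is the base-$(2B+1)$ choice guaranteeing injectivity.
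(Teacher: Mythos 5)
Your proposal is correct and follows essentially the same route as the paper: both identify each vertex $\phi$ with $\phi(\bz)$, observe that adjacency forces $\phi(\bz)+\phi'(\bz)=\e_i$, and take $f$ to be the inner product with the weight vector $\bigl(1,(2r+1),\dotsc,(2r+1)^{k-1}\bigr)$ so that $f^+$ lands in a $k$-element set, with injectivity secured by the base-$(2r+1)$ digit argument. The only cosmetic difference is that you phrase the functional with a general weight vector before specializing, whereas the paper specializes immediately.
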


\begin{proof}
Let $r=\max\{\lVert\phi(\bz)\rVert:\phi\in V(G)\}$. Define $f:V(G)\to\Z$ such that for all $\phi\in V(G)$, $f(\phi)=x_1+(2r+1)x_2+\dotsb+(2r+1)^{k-1}x_k$, where $(x_1,x_2,\dotsc,x_k)=\phi(\bz)$. Since $|x_i|\leq r$ for all $1\leq i\leq k$, by viewing $f(\phi)$ as a base $2r+1$ expansion of an integer, it is easy to see that $f$ is injective. It remains to show that $|f^+|\leq k$.

Consider $\phi\phi'\in E(G)$, where $\phi=\psi_i\circ\phi'$ for some $1\leq i\leq k$. So if $\phi'(\bz)=(x_1',x_2',\dotsc,x_k')$, we have $\phi(\bz)=(-x_1',-x_2',\dotsc,1-x_i',\dotsc,-x_k')$, and
\begin{align*}
f^+(\phi\phi')&=f(\phi)+f(\phi')\\
&=\big(-x_1'+(2r+1)(-x_2')+\dotsb+(2r+1)^{i-1}(1-x_i')+\dotsb+(2r+1)^{k-1}(-x_k')\big)\\
&\hspace{20pt}+\big(x_1'+(2r+1)x_2'+\dotsb+(2r+1)^{k-1}x_k'\big)\\
&=(2r+1)^{i-1}.
\end{align*}
Therefore, $f^+(E(G))\subseteq\{1,2r+1,(2r+1)^2,\dotsc,(2r+1)^{k-1}\}$.
\end{proof}

\begin{theorem}\label{thm:treeembed}
If $G$ is a tree and $s(G)\leq k$, then $G$ is isomorphic to a subgraph of $H_k$.
\end{theorem}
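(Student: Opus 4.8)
The plan is to build, directly from a sum index labeling, an explicit injective map $\Phi:V(G)\to\Gamma_k$ that carries every edge of the tree $G$ to an edge of $H_k$; this is the reverse construction to the one in the proof of Theorem~\ref{thm:sumsubgraph}. First I would fix a vertex labeling $f$ of $G$ with $|f^+|\le k$ and enumerate the sum-values $f^+(E(G))=\{c_1,c_2,\dotsc,c_m\}$ with $m\le k$. This assigns to each edge $e$ a ``color'' $c(e)\in\{1,\dotsc,k\}$ determined by $f^+(e)=c_{c(e)}$. Root $G$ at a vertex $v_0$, set $\Phi(v_0)=\mathrm{id}$ (which lies in $\Gamma_k$ because $\psi_i\circ\psi_i=\mathrm{id}$), and propagate outward: if $v$ has parent $u$ and the edge $uv$ has color $i$, put $\Phi(v)=\psi_i\circ\Phi(u)$. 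Since paths in a tree are unique, $\Phi$ is well defined, and since $\Phi(v)\circ\Phi(u)^{-1}=\psi_i\in S$, each tree edge $uv$ maps to an edge of $H_k$. Thus $\Phi$ is automatically a homomorphism onto a subgraph, and the entire content of the theorem is the injectivity of $\Phi$.

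For injectivity I would pass to the coordinate vectors $\Phi(v)(\bz)\in\Z^k$; since $\Phi(u)=\Phi(v)$ forces $\Phi(u)(\bz)=\Phi(v)(\bz)$, it suffices to show that $v\mapsto\Phi(v)(\bz)$ is injective (equivalently, one may invoke Lemma~\ref{lem:phiphi'}(\ref{item:phi=phi'})). If the path from $v_0$ to $v$ has $t$ edges colored $i_1,\dotsc,i_t$ in order, then unwinding the definition of the $\psi_i$ gives
$$\Phi(v)(\bz)=\e_{i_t}-\e_{i_{t-1}}+\dotsb+(-1)^{t-1}\e_{i_1}.$$
Writing $\Phi(v)(\bz)=(x_1,\dotsc,x_k)$, the crux is that $f(v)$ can be reconstructed from this vector. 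Summing coordinates gives $\sum_i x_i=\sum_{j=1}^t(-1)^{t-j}$, which equals $0$ or $1$ according as $t$ is even or odd, so the parity of $t$ is read off from $\Phi(v)(\bz)$. Telescoping the edge relations $f(u_j)+f(u_{j-1})=c_{i_j}$ along the root path then yields
$$f(v)=\sum_{j=1}^t(-1)^{t-j}c_{i_j}+(-1)^t f(v_0)=\sum_{i=1}^k c_i x_i+(-1)^t f(v_0).$$
Because $(-1)^t$ is determined by the parity of $\sum_i x_i$, the right-hand side is a function of $\Phi(v)(\bz)$ alone. Hence $\Phi(u)(\bz)=\Phi(v)(\bz)$ implies $f(u)=f(v)$, and the injectivity of $f$ gives $u=v$, establishing that $\Phi$ is injective.

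The main obstacle is conceptual rather than computational: one must recognize the two-fold role played by the single vector $\Phi(v)(\bz)$. It simultaneously encodes the parity of the distance from the root (through the coordinate sum $\sum_i x_i$) and the alternating sum of edge-labels along the root path (through the pairing $\sum_i c_i x_i$), and it is precisely the combination of these two pieces of data that recovers $f(v)$. Once this is seen, injectivity is immediate, and the remaining verifications---well-definedness of $\Phi$ on the tree, edge preservation, and the telescoping identity---are routine.
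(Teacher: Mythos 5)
Your proposal is correct and follows essentially the same route as the paper: root the tree, send each vertex to the composition of the generators $\psi_i$ indexed by the edge sum-values along its root path, and deduce injectivity by recovering $f(v)$ from the vector $\Phi(v)(\bz)$ via the pairing $\sum_i c_i x_i$ (the paper packages this as the linear map $\mathcal{T}$ intertwining $\psi_i$ with $\varphi_i(x)=\alpha_i-x$, after normalizing $f(v_0)=0$ so the $(-1)^t f(v_0)$ term you track through the coordinate-sum parity simply vanishes). The two arguments are the same computation in different clothing.
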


\begin{proof}
Since $s(G)\leq k$, let $f$ be a vertex labeling of $G$ such that $f^+(E(G))\subseteq\{\alpha_1,\alpha_2,\dotsc,\alpha_k\}$. Let $\mathcal{I}:\{\alpha_1,\alpha_2,\dotsc,\alpha_k\}\to\{1,2,\dotsc,k\}$ be such that $\mathcal{I}(\alpha_i)=i$ for all $1\leq i\leq k$. Let $\mathcal{J}:E(G)\to\{1,2,\dotsc,k\}$ such that for every edge $uv\in E(G)$, $\mathcal{J}(uv)=\mathcal{I}(f^+(uv))$.

Fix a vertex $v_0\in V(G)$. By Corollary~\ref{cor:0vertex}, we may assume that $f(v_0)=0$. For any vertex $w\in V(G)$ of distance $r$ away from $v_0$, there exists a unique path $v_0v_1v_2\dotsb v_r$ from $v_0$ to $w$, where $v_r=w$. We define a map $\Phi:V(G)\to V(H_k)=\Gamma_k$ such that 
$$\Phi(w)=\psi_{\mathcal{J}(v_rv_{r-1})}\circ\psi_{\mathcal{J}(v_{r-1}v_{r-2})}\circ\dotsb\circ\psi_{\mathcal{J}(v_1v_0)}.$$

To verify that $\Phi$ is a graph homomorphism, consider two adjacent vertices $u$ and $w$ in $V(G)$. Then their distances away from $v_0$ differ by $1$. Without loss of generality, let the paths from $v_0$ to $u$ and from $v_0$ to $w$ be $v_0v_1v_2\dotsb v_rv_{r+1}$ and $v_0v_1v_2\dotsb v_r$, respectively, where $v_{r+1}=u$ and $v_r=w$. Hence, $\Phi(u)=\psi_{\mathcal{J}(v_{r+1}v_r)}\circ \Phi(w)$, or $\Phi(u)\circ\Phi(w)^{-1}=\psi_{\mathcal{J}(v_{r+1}v_r)}$. This shows that $\Phi(u)$ and $\Phi(w)$ are adjacent in $H_k$, since $\psi_{\mathcal{J}(v_{r+1}v_r)}$ is a generator of $\Gamma_k$.

It remains to verify that $\Phi$ is injective. We define a linear map $\mathcal{T}:\Z^k\to\Z$ such that
$$\mathcal{T}(x_1,x_2,\dotsc,x_k)=\alpha_1x_1+\alpha_2x_2+\dotsb\alpha_kx_k,$$
and for all $1\leq i\leq k$, define $\varphi_i:\Z\to\Z$ such that $\varphi_i(x)=\alpha_i-x$. Note that for every edge $uv\in E(G)$, \begin{equation}\label{eqn:varphi}
\varphi_{\mathcal{J}(uv)}(f(u))=\alpha_{\mathcal{J}(uv)}-f(u)=f^+(uv)-f(u)=f(v).
\end{equation}
Furthermore, for all $1\leq i\leq k$, $\mathcal{T}\circ\psi_i=\varphi_i\circ\mathcal{T}$, since
\begin{align*}
&\hspace{15.25pt}\mathcal{T}\circ\psi_i(x_1,x_2,\dotsc,x_k)\\
&=\mathcal{T}(\e_i-(x_1,x_2,\dotsc,x_k))\\
&=\alpha_1(-x_1)+\alpha_2(-x_2)+\dotsb+\alpha_{i-1}(-x_{i-1})+\alpha_i(1-x_i)+\alpha_{i+1}(-x_{i+1})+\dotsb+\alpha_k(-x_k)\\
&=\alpha_i-(\alpha_1x_1+\alpha_2x_2+\dotsb\alpha_kx_k)\\
&=\varphi_i\circ\mathcal{T}(x_1,x_2,\dotsc,x_k)
\end{align*}
for all $(x_1,x_2,\dotsc,x_k)\in\Z^k$.

Suppose $\Phi(u)=\Phi(w)$ for some vertices $u,w\in V(G)$. Let the paths from $v_0$ to $u$ and from $v_0$ to $w$ be $u_0u_1u_2\dotsb u_r$ and $w_0w_1w_2\dotsb w_s$, respectively, where $u_0=w_0=v_0$, $u_r=u$, and $w_s=w$. Then 
\begin{align*}
\mathcal{T}\circ\Phi(u)(\bz)&=\mathcal{T}\circ\psi_{\mathcal{J}(u_ru_{r-1})}\circ\psi_{\mathcal{J}(u_{r-1}u_{r-2})}\circ\dotsb\circ\psi_{\mathcal{J}(u_1u_0)}(\bz)\\
&=\varphi_{\mathcal{J}(u_ru_{r-1})}\circ\varphi_{\mathcal{J}(u_{r-1}u_{r-2})}\circ\dotsb\circ\varphi_{\mathcal{J}(u_1u_0)}\circ\mathcal{T}(\bz)\\
&=\varphi_{\mathcal{J}(u_ru_{r-1})}\circ\varphi_{\mathcal{J}(u_{r-1}u_{r-2})}\circ\dotsb\circ\varphi_{\mathcal{J}(u_1u_0)}(0)\\
&=\varphi_{\mathcal{J}(u_ru_{r-1})}\circ\varphi_{\mathcal{J}(u_{r-1}u_{r-2})}\circ\dotsb\circ\varphi_{\mathcal{J}(u_1u_0)}(f(u_0))\\
&=f(u_r),
\end{align*}
where the last equality is obtained by repeatedly applying equation~\eqref{eqn:varphi}. Similarly, $\mathcal{T}\circ\Phi(w)(\bz)=f(w_s)$. Since $\mathcal{T}\circ\Phi(u)(\bz)=\mathcal{T}\circ\Phi(w)(\bz)$, we have $f(u_r)=f(w_s)$. By the injectivity of $f$, we conclude that $u_r=w_s$, i.e., $u=w$. Therefore, $\Phi$ is injective, thus $G$ is isomorphic to a subgraph of $H_k$, as desired.
\end{proof}

In the remainder of this subsection, we establish a necessary condition, based on the density of vertices, for $G$ to be isomorphic to a subgraph of a hyperdiamond. This allows us to provide an improved lower bound of the sum index for trees, comparing to Theorem~\ref{thm:chromindex}. We start by establishing the following lemma.

\begin{lemma}\label{lem:characx}
Let $\phi'\in\Gamma_k$ be such that $\phi'(\bz)=\bz$. Let $\x=(x_1,x_2,\dotsc,x_k)\in\Z^k$, and let $r=\lVert\x\rVert$. Then there exists $\phi\in\Gamma_k$ such that $\phi(\bz)=\x$ if and only if $x_1+x_2+\dotsb+x_k=\frac{1+(-1)^{r+1}}{2}$.
\end{lemma}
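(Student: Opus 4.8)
The plan is to identify the orbit of $\bz$ under $\Gamma_k$ as the set of alternating sums of standard basis vectors, and then extract both the parity constraint and its sufficiency from the minimal word length. The one input I will lean on throughout is Lemma~\ref{lem:phiphi'}(\ref{item:phi'=0}): since $\phi'(\bz)=\bz$, the distance in $H_k$ from any $\phi\in\Gamma_k$ to $\phi'$ equals $\lVert\phi(\bz)\rVert$, so $\phi$ admits a minimal-length expression $\phi=\psi_{i_m}\circ\psi_{i_{m-1}}\circ\dotsb\circ\psi_{i_1}\circ\phi'$ with $m$ equal to that distance, and equation~\eqref{eqn:phi0} then reads $\phi(\bz)=\e_{i_m}-\e_{i_{m-1}}+\dotsb+(-1)^{m-1}\e_{i_1}$ because $\phi'(\bz)=\bz$.

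For the forward direction, suppose $\phi(\bz)=\x$. Then $m=r$ by the distance formula, so $\x=\sum_{j=1}^{r}(-1)^{r-j}\e_{i_j}$. Each $\e_{i_j}$ has coordinate sum $1$, hence the coordinate sum of $\x$ is $\sum_{j=1}^{r}(-1)^{r-j}=\sum_{s=0}^{r-1}(-1)^s$, which equals $0$ when $r$ is even and $1$ when $r$ is odd; this is precisely $\frac{1+(-1)^{r+1}}{2}$.

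For the converse I would give an explicit length-$r$ word realizing $\x$. Put $p=\sum_{x_\ell>0}x_\ell$ and $q=\sum_{x_\ell<0}|x_\ell|$, so that $p+q=\lVert\x\rVert=r$ while $p-q=\sum_\ell x_\ell$. The hypothesis forces $p-q=r\bmod 2$, and together with $p+q=r$ this gives $p=\lceil r/2\rceil$ and $q=\lfloor r/2\rfloor$. In the alternating expansion $\sum_{j=1}^{r}(-1)^{r-j}\e_{i_j}$ there are exactly $\lceil r/2\rceil$ positions carrying a $+$ sign and $\lfloor r/2\rfloor$ carrying a $-$ sign, so I can fill the plus-positions with the multiset consisting of $x_\ell$ copies of the index $\ell$ for each $\ell$ with $x_\ell>0$ (there are $p$ of these) and the minus-positions with $|x_\ell|$ copies of $\ell$ for each $\ell$ with $x_\ell<0$ (there are $q$ of these). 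The word $\phi=\psi_{i_r}\circ\dotsb\circ\psi_{i_1}\circ\phi'$ then has net coefficient $x_\ell$ on each $\e_\ell$, so $\phi\in\Gamma_k$ satisfies $\phi(\bz)=\x$.

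The routine points—that the expansion~\eqref{eqn:phi0} is valid for an arbitrary word, not only a geodesic, and that the order of the chosen indices within the plus- and minus-blocks is immaterial to the sum—are immediate. The genuinely load-bearing step, and the one I would verify most carefully, is the counting identity $p=\lceil r/2\rceil$, $q=\lfloor r/2\rfloor$: it is exactly here that the parity hypothesis $\sum_\ell x_\ell=r\bmod 2$ makes the positive and negative weights of $\x$ fit precisely into the plus- and minus-slots available in a word of the minimal length $r$, so that no longer word (which would contradict the distance bound) is ever required.
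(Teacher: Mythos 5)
Your proof is correct and follows essentially the same route as the paper: the forward direction uses Lemma~\ref{lem:phiphi'}(\ref{item:phi'=0}) to get a geodesic word of length $r=\lVert\x\rVert$ and reads off the coordinate sum from the alternating expansion, and the converse builds a length-$r$ word by distributing the positive and negative weights of $\x$ into the $\lceil r/2\rceil$ plus-slots and $\lfloor r/2\rfloor$ minus-slots, which is exactly the paper's choice of the partition $S_1$, $S_2$. Your explicit verification that $p=\lceil r/2\rceil$ and $q=\lfloor r/2\rfloor$ just spells out what the paper leaves implicit in the phrase ``we can express.''
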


\begin{proof}
If there exists $\phi\in\Gamma_k$ such that $\phi(\bz)=\x$, then by Lemma~\ref{lem:phiphi'}$(\ref{item:phi'=0})$, the distance between $\phi$ and $\phi'$ is $r=\lVert\x\rVert$. In other words,
$$\phi=\psi_{i_r}\circ\psi_{i_{r-1}}\circ\dotsb\circ\psi_{i_1}\circ\phi'$$
for some $i_1,i_2,\dotsc,i_r\in\{1,2,\dotsc,k\}$. Hence, $\phi(\bz)=\e_{i_r}-\e_{i_{r-1}}+\dotsb+(-1)^{r-1}\e_{i_1}$, and
$$x_1+x_2+\dotsb+x_k=\sum_{i=1}^r(-1)^{r-i}=\frac{1+(-1)^{r+1}}{2}.$$

If $x_1+x_2+\dotsb+x_k=\frac{1+(-1)^{r+1}}{2}$, then we can express
$$\x=\sum_{j\in S_1}\e_{i_j}-\sum_{j\in S_2}\e_{i_j},$$
where $S_1$ and $S_2$ forms a partition of $\{1,2,\dotsc,r\}$, $|S_1|-|S_2|=\frac{1+(-1)^{r+1}}{2}$, and $i_j\in\{1,2,\dotsc,k\}$ for all $j\in S_1\cup S_2$. In particular, we can choose $S_1=\{j\in\{1,2,\dotsc,r\}:r-j\text{ is even}\}$ and $S_2=\{j\in\{1,2,\dotsc,r\}:r-j\text{ is odd}\}$. We can now define $\phi=\psi_{i_r}\circ\psi_{i_{r-1}}\circ\dotsb\circ\psi_{i_1}\circ\phi'$.
As a result, $\phi\in\Gamma_k$ and
\begin{align*}
\phi(\bz)&=\psi_{i_r}\circ\psi_{i_{r-1}}\circ\dotsb\circ\psi_{i_1}\circ\phi'(\bz)\\
&=\e_{i_r}-\e_{i_{r-1}}+\dotsb+(-1)^{r-1}\e_{i_1}\\
&=\sum_{j\in S_1}\e_{i_j}-\sum_{j\in S_2}\e_{i_j}=\x.
\end{align*}
\end{proof}

\begin{theorem}\label{thm:density}
Let $r$ be a positive integer. Then the number of vertices in $H_k$ that are of distance $r$ away from a fixed vertex $\phi\in V(H_k)$ is
$$\sum_{j=1}^k\binom{k}{j}\binom{\lceil r/2\rceil+j-1}{j-1}\binom{\lfloor r/2\rfloor-1}{k-j-1}.$$
Here, we define $\binom{-1}{x}=0$ for all nonnegative integers $x$ and $\binom{-1}{-1}=1$.
\end{theorem}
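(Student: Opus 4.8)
The plan is to reduce the distance count on $H_k$ to a lattice-point count in $\Z^k$ and then evaluate that count by elementary composition-counting. First, since $H_k$ is a Cayley graph it is vertex-transitive: for any $\phi\in\Gamma_k$, right multiplication $\chi\mapsto\chi\circ\phi^{-1}$ is a graph automorphism (edges are preserved because $(\chi\circ\phi^{-1})\circ(\chi'\circ\phi^{-1})^{-1}=\chi\circ(\chi')^{-1}$), and it sends $\phi$ to the identity map $\mathrm{id}\in\Gamma_k$. Hence the number of vertices at distance $r$ from $\phi$ equals the number at distance $r$ from $\mathrm{id}$, so it suffices to count from $\mathrm{id}$, which satisfies $\mathrm{id}(\bz)=\bz$. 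By Lemma~\ref{lem:phiphi'}$(\ref{item:phi'=0})$, the distance from $\mathrm{id}$ to any $\chi\in\Gamma_k$ is $\lVert\chi(\bz)\rVert$, and by Lemma~\ref{lem:phiphi'}$(\ref{item:phi=phi'})$ the map $\chi\mapsto\chi(\bz)$ is injective on $\Gamma_k$. Therefore the desired quantity is the number of distinct vectors $\chi(\bz)$ with $\lVert\chi(\bz)\rVert=r$, which by Lemma~\ref{lem:characx} equals the number of $\x=(x_1,\dotsc,x_k)\in\Z^k$ satisfying $\lVert\x\rVert=r$ and $x_1+x_2+\dotsb+x_k=\frac{1+(-1)^{r+1}}{2}$.

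Next I would split each such $\x$ into its positive and negative parts. Writing $P=\sum_{x_i>0}x_i$ and $N=\sum_{x_i<0}(-x_i)$, the two constraints become $P+N=\lVert\x\rVert=r$ and $P-N=x_1+\dotsb+x_k=\frac{1+(-1)^{r+1}}{2}$. Solving this $2\times2$ linear system gives $P=\lceil r/2\rceil$ and $N=\lfloor r/2\rfloor$ for both parities of $r$ (here one uses that $\frac{1+(-1)^{r+1}}{2}=\lceil r/2\rceil-\lfloor r/2\rfloor$). Thus the problem is reduced to counting integer vectors whose positive entries sum to $\lceil r/2\rceil$ and whose negative entries have absolute values summing to $\lfloor r/2\rfloor$.

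Finally, I would organize this count by $j$, the number of coordinates $i$ with $x_i\geq0$, so that $k-j$ is the number of strictly negative coordinates. Since the sign pattern of $\x$ is determined by $\x$ itself, this classification partitions the vectors being counted, and $j$ ranges over $1\leq j\leq k$ because $P=\lceil r/2\rceil\geq1$ forces at least one nonnegative coordinate. For fixed $j$ there are $\binom{k}{j}$ ways to choose which coordinates are nonnegative; distributing the total $\lceil r/2\rceil$ among these $j$ coordinates as a weak composition (zeros permitted, corresponding to zero-valued coordinates) gives $\binom{\lceil r/2\rceil+j-1}{j-1}$ choices; and distributing $\lfloor r/2\rfloor$ among the remaining $k-j$ strictly negative coordinates as a composition into positive parts gives $\binom{\lfloor r/2\rfloor-1}{k-j-1}$ choices. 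Summing the product over $j$ produces the stated formula. The only step demanding genuine care is the boundary bookkeeping: the degenerate cases with no strictly negative coordinates (when $r=1$, so $N=0$) and with no zero coordinates are precisely the situations in which the composition counts must be read as ``one empty composition of $0$,'' which is exactly what the conventions $\binom{-1}{x}=0$ and $\binom{-1}{-1}=1$ encode; I would verify these against small cases such as $r=1$ (giving $k$, the basis vectors $\e_1,\dotsc,\e_k$) and $r=2$ (giving $k(k-1)$) to confirm the conventions align the summand structure with the bijection.
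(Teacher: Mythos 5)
Your proposal is correct and follows essentially the same route as the paper: reduce by vertex-transitivity to the identity, use Lemma~\ref{lem:phiphi'} and Lemma~\ref{lem:characx} to identify the distance-$r$ sphere with the set of $\x\in\Z^k$ satisfying $\lVert\x\rVert=r$ and $x_1+\dotsb+x_k=\frac{1+(-1)^{r+1}}{2}$, and then count by the number $j$ of nonnegative coordinates via weak compositions of $\lceil r/2\rceil$ and strict compositions of $\lfloor r/2\rfloor$. The only cosmetic difference is that you phrase the intermediate step through the positive and negative parts $P$ and $N$, which is the same $2\times2$ system the paper solves coordinate-set by coordinate-set.
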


\begin{proof}
Since $H_k$ is vertex-transitive, we will only consider the case where the fixed vertex is $\phi'$ such that $\phi'(\bz)=\bz$. Let $\mathcal{V}$ be the set of vertices in $H_k$ that are of distance $r$ away from $\phi'$, and let 
$$\mathcal{V}'=\left\{\x=(x_1,x_2,\dotsc,x_k)\in\Z^k:\lVert\x\rVert=r\text{ and }x_1+x_2+\dotsb+x_k=\frac{1+(-1)^{r+1}}{2}\right\}.$$
Let $\tau:\mathcal{V}\to\mathcal{V}'$ be such that $\tau(\phi)=\phi(\bz)$ for all $\phi\in\mathcal{V}$. We will show that $\tau$ is a bijection. For all $\phi\in\mathcal{V}$,  $\lVert\phi(\bz)\rVert=r$ by Lemma~\ref{lem:phiphi'}$(\ref{item:phi'=0})$, and $x_1+x_2+\dotsb+x_k=\frac{1+(-1)^{r+1}}{2}$ by Lemma~\ref{lem:characx}, so $\tau$ is well-defined. If $\phi,{\phi}^*\in\mathcal{V}$ such that $\tau(\phi)=\tau(\phi^*)$, then $\phi(\bz)=\phi^*(\bz)$, which implies that $\phi=\phi^*$ by Lemma~\ref{lem:phiphi'}$(\ref{item:phi=phi'})$, so $\tau$ is injective. For all $\x=(x_1,x_2,\dotsc,x_k)\in\mathcal{V}'$, there exists $\phi\in\Gamma_k$ such that $\phi(\bz)=\x$ by Lemma~\ref{lem:characx}, and $\phi$ is of distance $r$ away from $\phi'$ by Lemma~\ref{lem:phiphi'}$(\ref{item:phi'=0})$. Therefore, $\phi\in\mathcal{V}$ and $\tau(\phi)=\x$, so $\tau$ is surjective.

Since $\tau:\mathcal{V}\to\mathcal{V}'$ is a bijection, we have $|\mathcal{V}|=|\mathcal{V}'|$. To count the number of elements in $\mathcal{V}'$, we first partition $\mathcal{V}'$ into $\mathcal{V}'_0,\mathcal{V}'_1,\mathcal{V}'_2,\dotsc,\mathcal{V}'_k$, where
$$\mathcal{V}'_j=\{\x=(x_1,x_2,\dotsc,x_k)\in\mathcal{V}':\text{the number of nonnegative entries in \x\ is }j\}.$$
Note that $\mathcal{V}'_0$ is empty since $x_1+x_2+\dotsb+x_k=\frac{1+(-1)^{r+1}}{2}\geq0$. For each fixed $j\in\{1,2,\dotsc,k\}$, there are $\binom{k}{j}$ ways to partition $\{1,2,\dotsc,k\}$ into two subsets $S_1$ and $S_2$ such that $|S_1|=j$ and $|S_2|=k-j$. For each such partition $S_1$ and $S_2$, let
$$\mathcal{V}'_{S_1}=\{\x=(x_1,x_2,\dotsc,x_k)\in\mathcal{V}'_j:x_i\geq0\text{ if and only if }i\in S_1\}.$$
It is easy to see that the number of elements in $\mathcal{V}'_{S_1}$ is the same as the number of integer solutions $(x_1,x_2,\dotsc,x_k)$ to the system
$$\begin{cases}
\displaystyle\sum_{i\in S_1}x_i-\sum_{i\in S_2}x_i=r,\\
\displaystyle\sum_{i\in S_1}x_i+\sum_{i\in S_2}x_i=\frac{1+(-1)^{r+1}}{2}
\end{cases}$$
such that $x_i\geq0$ if and only if $i\in S_1$. 
This system is equivalent to
\begin{numcases}
\displaystyle\sum_{i\in S_1}x_i=\frac{r}{2}+\frac{1+(-1)^{r+1}}{4}=\left\lceil\frac{r}{2}\right\rceil,\label{eqn:S1}\\
-\displaystyle\sum_{i\in S_2}x_i=\frac{r}{2}-\frac{1+(-1)^{r+1}}{4}=\left\lfloor\frac{r}{2}\right\rfloor.\label{eqn:S2}
\end{numcases}
The number of nonnegative integer solutions to equation~\eqref{eqn:S1} is $\binom{\lceil r/2\rceil+j-1}{j-1}$, and the number of negative integer solutions to equation~\eqref{eqn:S2} is $\binom{\lfloor r/2\rfloor-1}{k-j-1}$. Therefore, the cardinality of $\mathcal{V}'_j$ is $\binom{k}{j}\binom{\lceil r/2\rceil+j-1}{j-1}\binom{\lfloor r/2\rfloor-1}{k-j-1}$, thus proving the theorem by summing over $j\in\{1,2,\dotsc,k\}$.
\end{proof}

Combining Theorems~\ref{thm:treeembed} and \ref{thm:density}, we have the following corollary.

\begin{corollary}\label{cor:tree}
Let $G$ be a tree. Then $s(G)\geq k$, where $k$ is the minimum positive integer such that for every vertex $v\in V(G)$ and positive integer $r$, the number of vertices in $G$ that are of distance $r$ away from $v$ is at most
$$\sum_{j=1}^k\binom{k}{j}\binom{\lceil r/2\rceil+j-1}{j-1}\binom{\lfloor r/2\rfloor-1}{k-j-1}.$$
\end{corollary}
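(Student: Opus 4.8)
The plan is to show that the density condition in the statement is satisfied by the specific value $k=s(G)$; since the displayed count is nondecreasing in $k$ (appending a zero coordinate embeds the distance-$r$ sphere of $H_k$ into that of $H_{k+1}$), the integers satisfying the condition form an upward-closed set, so verifying it for $k=s(G)$ immediately gives $s(G)\geq k_0$, where $k_0$ is the minimum integer meeting the condition. Thus the whole corollary reduces to the single claim that, with $k=s(G)$, every vertex $v$ and radius $r$ satisfy $\lvert\{w:d_G(v,w)=r\}\rvert\leq N(k,r)$, where $N(k,r)$ denotes the displayed formula.

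To verify this I would combine the two preceding results. First, Theorem~\ref{thm:treeembed} realizes $G$ as a subgraph of $H_k$, and Theorem~\ref{thm:density} identifies $N(k,r)$ as the exact number of vertices of $H_k$ at distance $r$ from any fixed vertex. The intended move is then to fix $v$ and $r$, re-root at $v$ using Corollary~\ref{cor:0vertex}, and push the distance-$r$ sphere of $G$ about $v$ into the distance-$r$ sphere of $H_k$ about the image of $v$ via the embedding $\Phi$ built in the proof of Theorem~\ref{thm:treeembed}. Recall that a vertex $w$ at tree-distance $r$ is sent to a group element with $\Phi(w)(\bz)=\e_{i_r}-\e_{i_{r-1}}+\dotsb+(-1)^{r-1}\e_{i_1}$, the alternating sum of the $r$ basis vectors read off the edge labels along the unique $v$–$w$ path. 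Injectivity of $\Phi$ is guaranteed by Theorem~\ref{thm:treeembed}, so if this map landed the sphere of radius $r$ into the sphere of radius $r$, comparing cardinalities against Theorem~\ref{thm:density} would close the argument.

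The hard part is exactly this last distance comparison. Since $G$ embeds only as a subgraph, one is guaranteed $d_{H_k}(\Phi(v),\Phi(w))\leq d_G(v,w)$, and the inequality can be strict: if some label repeats at two positions of opposite parity along the $v$–$w$ path, the alternating sum $\Phi(w)(\bz)$ partially cancels and $\lVert\Phi(w)(\bz)\rVert<r$ by the reduction analysis in the proof of Lemma~\ref{lem:phiphi'}$(\ref{item:phi'=0})$. Such ``folding'' drops a tree-distance-$r$ vertex onto a smaller sphere of $H_k$, so the naive map is not sphere-to-sphere; by itself it yields only the weaker ball estimate $\lvert B_r^G(v)\rvert\leq\lvert B_r^{H_k}\rvert$. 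To repair this I would argue that, for each fixed root $v$, one may choose a sum index labeling whose embedding is \emph{isometric}, i.e.\ cancellation-free: viewing $f^+$ as the proper edge coloring of Theorem~\ref{thm:chromindex} and classifying each edge by the parity of the level (distance from $v$) of its deeper endpoint, cancellation is avoided precisely when each label is confined to edges of a single parity class, so the task becomes producing such a parity-constrained proper coloring using no more than $s(G)$ labels. Granting this, every distance-$r$ sphere of $G$ injects into the distance-$r$ sphere of $H_k$ and Theorem~\ref{thm:density} finishes. I expect establishing the existence of this cancellation-free labeling to be the genuinely delicate step, since the embedding arising from an arbitrary sum index labeling need not be isometric; a fallback worth pursuing is a direct counting argument showing that folding, which corresponds to the tree path traversing the long way around a cycle of $H_k$, cannot inflate any single sphere past $N(k,r)$.
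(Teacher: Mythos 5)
Your reduction is sound (indeed, once the density condition is verified for $k=s(G)$, minimality alone gives $s(G)\geq k$, so the upward-closedness observation is not even needed), and you have correctly identified the two ingredients the paper combines: Theorem~\ref{thm:treeembed} to embed $G$ into $H_{s(G)}$ and Theorem~\ref{thm:density} to count. More importantly, you have put your finger on a genuine difficulty that the paper's one-sentence proof (``Combining Theorems~\ref{thm:treeembed} and \ref{thm:density}\dots'') passes over in silence: $\Phi$ is only a subgraph embedding, and a subtree of $H_k$ need not be isometrically embedded, so the distance-$r$ sphere of $G$ about $v$ is only guaranteed to land in the union of the spheres of $H_k$ of radii $r,r-2,r-4,\dotsc$, whose cardinality exceeds the displayed bound. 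This worry is not hypothetical. Since $H_3$ has girth $6$, five consecutive vertices of a $6$-cycle form a subtree $P_5$ of $H_3$ whose endpoints are at distance $2$ in $H_3$. Worse, the paper's own sum index labeling of the spider $S_{2,2,2}$ from Theorem~\ref{thm:spider} assigns the edge labels $-2,1,2,-2$ along the path joining the leaves of legs $1$ and $3$; the repeated label $-2$ sits at positions of opposite parity, so the alternating sum $\e_{i_4}-\e_{i_3}+\e_{i_2}-\e_{i_1}$ collapses to a vector of $1$-norm $2$, and these two leaves, at tree-distance $4$, are mapped by $\Phi$ to vertices of $H_3$ at distance $2$. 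So the embedding arising from an optimal labeling really can fold spheres inward.

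However, your proposal does not close this gap: the existence of a ``cancellation-free'' sum index labeling (one for which $f^+$ never repeats a label at two positions of opposite parity along any path of the tree) is precisely the missing step, and you leave it unproved; the fallback counting argument is likewise only sketched. As written, your argument establishes only the weaker ball estimate, not the stated sphere bound. To be fair, the paper itself offers nothing beyond the one-line combination, so you have exposed a hole in the published argument rather than overlooked a proof that exists in the paper; but a complete proof of the corollary still requires either constructing the cancellation-free labeling (which does exist for $S_{2,2,2}$, e.g.\ by labeling the outer edges of the three legs with a cyclic permutation of the labels of the inner edges), or a direct argument that folding cannot push any single sphere of a subtree of $H_k$ past the stated count. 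Until one of these is supplied, the proof is incomplete.
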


The last corollary of this subsection illustrates an application of Corollary~\ref{cor:tree}.

\begin{corollary}
For every positive integer $k$, there exists a binary tree $G$ such that $s(G)>k$.
\end{corollary}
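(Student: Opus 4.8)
The plan is to exploit the gap between the exponential growth of a binary tree and the polynomial growth of the hyperdiamond $H_k$, feeding the latter into the density lower bound of Corollary~\ref{cor:tree}. Fix a positive integer $k$. For $1\le i\le k$ and $r\ge1$, write
$$B_i(r)=\sum_{j=1}^i\binom{i}{j}\binom{\lceil r/2\rceil+j-1}{j-1}\binom{\lfloor r/2\rfloor-1}{i-j-1}$$
for the quantity appearing in Theorem~\ref{thm:density} and Corollary~\ref{cor:tree}; by Theorem~\ref{thm:density} this is exactly the number of vertices of $H_i$ at distance $r$ from a fixed vertex. The first point I would record is that, for each fixed $i$, the function $r\mapsto B_i(r)$ grows at most polynomially: the factor $\binom{\lceil r/2\rceil+j-1}{j-1}$ is bounded by a polynomial of degree $j-1$ in $r$, and $\binom{\lfloor r/2\rfloor-1}{i-j-1}$ by one of degree $i-j-1$, so every summand is $O(r^{i-2})=O(r^{k-2})$. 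Hence $B_i(r)=o(2^r)$ for each $i$, and since $i$ ranges over the finite set $\{1,2,\dots,k\}$, I may fix a single positive integer $r$ with
$$2^r>B_i(r)\qquad\text{for every }1\le i\le k.$$

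Next I would take $G$ to be the complete binary tree of depth $r$, rooted at a vertex $v_0$, in which every internal vertex has exactly two children. This is a binary tree of maximum degree $3$, and it has exactly $2^r$ vertices at distance $r$ from $v_0$. Thus $G$ possesses a vertex, namely $v_0$, and a distance, namely $r$, at which the number of vertices exceeds $B_i(r)$ for every $i\le k$. Consequently the density condition of Corollary~\ref{cor:tree} fails for every parameter $i\in\{1,2,\dots,k\}$, so the minimum positive integer for which that condition holds is at least $k+1$. By Corollary~\ref{cor:tree}, $s(G)\ge k+1>k$, as desired.

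The only step needing genuine care is the polynomial estimate $B_i(r)=o(2^r)$; everything else is bookkeeping. I expect the subtlety there to be the boundary summands (for instance $j=i$, where $\binom{\lfloor r/2\rfloor-1}{-1}$ invokes the stated conventions and vanishes for all $r\ge2$), but such edge cases only decrease the count and do not affect the polynomial bound. It is worth emphasizing \emph{why} polynomial growth should be expected at all: although $H_k$ is $k$-regular and a naive count gives only the exponential bound $k(k-1)^{r-1}$ (which for $k\ge3$ already outgrows $2^r$), the vertices at distance $r$ are, via the bijection $\phi\mapsto\phi(\bz)$ of Theorem~\ref{thm:density}, precisely the lattice points of $\Z^k$ lying simultaneously on an $\ell^1$-sphere of radius $r$ and on an affine hyperplane, a set of size $\Theta(r^{k-2})$. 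This is the structural reason an exponentially branching binary tree cannot embed into $H_k$, and hence, by Theorem~\ref{thm:treeembed}, must have sum index exceeding $k$.
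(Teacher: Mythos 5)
Your proof is correct and takes essentially the same route as the paper: take a perfect binary tree of height $r$, observe that it has $2^r$ vertices at distance $r$ from the root while the bound in Corollary~\ref{cor:tree} is polynomial in $r$ (of degree at most $k-2$), and choose $r$ large enough to force $s(G)>k$. The paper leaves the polynomial estimate implicit where you spell it out, but the argument is the same.
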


\begin{proof}
Consider a fixed positive integer $k$. Let $G_r$ be a perfect binary tree with height $r$. The number of vertices in $G_r$ that are of distance $r$ away from the root vertex is $2^r$. However, if $s(G)\leq k$, then the number of vertices allowed by Corollary~\ref{cor:tree} is a polynomial in $r$. This leads to a contradiction when $r$ is sufficiently large.
\end{proof}

\section{Difference index}\label{sec:diffindex}

The main goal of this section is to develop results regarding the difference index of graphs analogous to those in Section~\ref{sec:sumindex}.

\subsection{Bounds on the difference index}

We begin by presenting a lower bound for the difference index of a graph $G$.

\begin{theorem}\label{thm:dilowerbnd}
Let $\delta(G)$ be the minimum degree of $G$, and recall that $\chi'(G)$ is the chromatic index of $G$. Then $d(G)\geq\max\left\{\left\lceil\frac{\chi'(G)}{2}\right\rceil,\delta(G)\right\}$.
\end{theorem}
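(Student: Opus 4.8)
The plan is to establish the two lower bounds $d(G)\geq\delta(G)$ and $d(G)\geq\left\lceil\frac{\chi'(G)}{2}\right\rceil$ independently, after which the claimed bound follows by taking the maximum. For the minimum-degree bound, I would fix an arbitrary vertex labeling $f$ and let $w$ be the vertex attaining the largest label $f(w)$. Every neighbor $v$ of $w$ then satisfies $f(v)<f(w)$, so $f^-(wv)=f(w)-f(v)$, and by injectivity of $f$ these values are pairwise distinct as $v$ ranges over the neighbors of $w$. Hence the $\deg(w)$ edges incident to $w$ receive $\deg(w)$ distinct labels, giving $|f^-|\geq\deg(w)\geq\delta(G)$. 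Since $f$ was arbitrary, $d(G)\geq\delta(G)$.

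For the chromatic-index bound, I would start from a difference index labeling $f$ with $|f^-|=d(G)$, and for each value $\ell\in f^-(E(G))$ let $G_\ell$ denote the subgraph consisting of all edges $uv$ with $f^-(uv)=\ell$. The approach is to show that each $G_\ell$ is properly $2$-edge-colorable and then combine these colorings using disjoint color sets across the $d(G)$ distinct values of $\ell$, yielding a proper edge coloring of $G$ with at most $2\,d(G)$ colors and hence $\chi'(G)\leq 2\,d(G)$. The two structural facts I would prove about $G_\ell$ are: first, every vertex $u$ has degree at most $2$ in $G_\ell$, because a neighbor $v$ with $f^-(uv)=\ell$ must have $f(v)\in\{f(u)-\ell,\,f(u)+\ell\}$, and injectivity of $f$ permits at most one neighbor of each type; second, every cycle in $G_\ell$ has even length, because traversing a cycle returns $f$ to its starting value while each step changes $f$ by exactly $\pm\ell$, forcing equally many $+\ell$ and $-\ell$ steps.

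Together these facts show that each connected component of $G_\ell$ is a path or an even cycle, so $\chi'(G_\ell)\leq 2$, which completes the argument. I expect the even-cycle observation to be the crux of the proof: the degree bound alone only gives $\chi'(G_\ell)\leq 3$ (an odd cycle needs three colors), which would weaken the conclusion to $d(G)\geq\left\lceil\frac{\chi'(G)}{3}\right\rceil$, so the telescoping argument ruling out odd cycles is precisely what sharpens the bound to the factor of $2$. Finally, combining $\chi'(G)\leq 2\,d(G)$ with the integrality of $d(G)$ yields $d(G)\geq\left\lceil\frac{\chi'(G)}{2}\right\rceil$, and together with $d(G)\geq\delta(G)$ this gives the stated result.
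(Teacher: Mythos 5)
Your proposal is correct and follows essentially the same route as the paper: the $\delta(G)$ bound via the maximum-label vertex, and the $\chi'(G)$ bound via decomposing $E(G)$ into the subgraphs $G_\ell$ of constant difference, bounding their maximum degree by $2$ through injectivity, and assembling a proper edge coloring of $G$ with at most $2d(G)$ colors. The only point of divergence is your treatment of cycles in $G_\ell$: you show any cycle must have even length by the telescoping-sum argument, which indeed suffices for $\chi'(G_\ell)\leq 2$, whereas the paper proves the stronger fact that $G_\ell$ is acyclic (a disjoint union of paths) --- injectivity actually forces the increments $f(v_{i+1})-f(v_i)=\pm\ell$ to carry the same sign at every step around a cycle, so $f$ would be strictly monotone along it and the cycle could not close up. Either version completes the proof.
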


\begin{proof}
Let $f$ be a difference index labeling of $G$, and let $v_0$ be the vertex of $G$ at which $f$ attains its maximum. Let $v_1,v_2,\dotsc,v_r$ be the neighbors of $v_0$, where $r\geq\delta(G)$. Then $f^-(v_0v_i)=f(v_0)-f(v_i)$ must be distinct for all $1\leq i\leq r$ since $f$ is injective. Therefore, $d(G)\geq r\geq\delta(G)$.

Consider $\alpha\in f^-(E(G))$. Let $G_\alpha$ be the subgraph of $G$ such that 
$$E(G_\alpha)=\{e\in E(G):f^-(e)=\alpha\}.$$
Note that the maximum degree of $G_\alpha$ is at most $2$; otherwise, if $u_0\in V(G_\alpha)$ has neighbors $u_1$, $u_2$, and $u_3$ in $G_\alpha$, then $f^-(u_0u_1)=f^-(u_0u_2)=f^-(u_0u_3)=\alpha$ will contradict that $f$ is injective. Moreover, since $1=d(G_\alpha)\geq\delta(G_\alpha)$, $G_\alpha$ can only be a disjoint union of paths. 

As a result, we can define a proper edge coloring $c_\alpha:E(G_\alpha)\to\Z$ such that the image of $c_\alpha$ is $\{\alpha,-\alpha\}$. Finally, define $c:E(G)\to\Z$ such that for each $\alpha\in f^-(E(G))$, $c(e)=c_\alpha(e)$ if $e\in G_\alpha$. It is clear that $c$ is a proper edge coloring of $G$. Hence, $\chi'(G)\leq2d(G)$.
\end{proof}

We next provide an analogue of Lemma~\ref{lem:shifting}, which we will find useful in this section.  We state the lemma without proof, as the proof is similar to that of Lemma~\ref{lem:shifting}.

\begin{lemma}\label{lem:dishifting}
Let $f$ be a vertex labeling of $G$. Let $g$ be a vertex labeling of $G$ such that for all vertices $v\in V(G)$, $g(v)=f(v)+c$ for some integer $c$. Then $|f^-|=|g^-|$.
\end{lemma}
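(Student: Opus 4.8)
The plan is to mimic the proof of Lemma~\ref{lem:shifting} almost verbatim, replacing the additive induced labeling $f^+$ with the absolute-difference labeling $f^-$. The core idea is that adding a constant $c$ to every vertex label leaves all pairwise differences unchanged, so the induced edge labels are literally identical, not merely shifted. This makes the difference-index case even cleaner than the sum-index case: instead of a bijection $x\mapsto x+2c$, the identity map already does the job.

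Concretely, I would fix an arbitrary edge $uv\in E(G)$ and compute
\begin{align*}
g^-(uv)&=|g(u)-g(v)|=|(f(u)+c)-(f(v)+c)|\\
&=|f(u)-f(v)|=f^-(uv).
\end{align*}
Since this holds for every edge, the two induced edge-labelings $f^-$ and $g^-$ agree as functions on $E(G)$, hence have exactly the same range: $f^-(E(G))=g^-(E(G))$. Taking cardinalities yields $|f^-|=|g^-|$, as desired.

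I expect there to be no genuine obstacle here; the statement is an immediate consequence of the translation-invariance of the absolute difference, and the authors have already indicated they will omit the proof precisely because it parallels Lemma~\ref{lem:shifting}. The only point worth flagging is that one should verify $g$ is still a valid vertex labeling (i.e.\ injective): this is automatic, since $g(v)=f(v)+c$ is the composition of the injective map $f$ with the translation $x\mapsto x+c$, which is itself a bijection on $\Z$, so $g$ is injective whenever $f$ is. Beyond that sanity check, the argument is a one-line computation, and the main ``work'' is simply recording that equality of functions gives equality of ranges.
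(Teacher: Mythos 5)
Your proof is correct and matches the argument the paper intends (the paper omits it, noting only that it parallels Lemma~\ref{lem:shifting}); your observation that the constant cancels entirely so $f^-=g^-$ as functions is exactly the right simplification. No issues.
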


If $G$ is a bipartite graph, we have a bound on the difference index in terms of its sum index.

\begin{theorem}\label{thm:dibipartite}
Let $G$ be a bipartite graph. Then $\left\lceil\frac{s(G)}{2}\right\rceil\leq d(G)\leq s(G)$.
\end{theorem}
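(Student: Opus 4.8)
The plan is to prove the two inequalities separately, exploiting the bipartite structure in a way that lets me translate between sum labelings and difference labelings. Write $V(G) = A \cup B$ for the bipartition, so that every edge of $G$ joins a vertex of $A$ to a vertex of $B$.

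For the upper bound $d(G) \leq s(G)$, I would start with a sum index labeling $f$ of $G$, so $|f^+| = s(G)$, and construct a vertex labeling $g$ that negates the labels on one side of the bipartition: set $g(v) = f(v)$ for $v \in A$ and $g(v) = -f(v)$ for $v \in B$. For an edge $uv$ with $u \in A$ and $v \in B$, we then have $g^-(uv) = |g(u) - g(v)| = |f(u) - (-f(v))| = |f(u) + f(v)| = |f^+(uv)|$. Taking absolute values can only collapse labels together, so $|g^-| \leq |f^+| = s(G)$. One subtlety to address: $g$ must still be injective, which is not automatic after negating half the labels (a positive label in $A$ could collide with a negated label from $B$). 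I would fix this by first applying Lemma~\ref{lem:shifting} to shift $f$ so that, say, all labels on $A$ are large and positive while the labels on $B$ remain in a controlled range, guaranteeing that $g$ stays injective; since shifting preserves $|f^+|$ this costs nothing. This injectivity bookkeeping is the main technical obstacle in the upper bound.

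For the lower bound $\lceil s(G)/2 \rceil \leq d(G)$, equivalently $s(G) \leq 2d(G)$, I would reverse the construction. Start from a difference index labeling $h$ with $|h^-| = d(G)$, and produce a sum labeling by negating one side: set $g(v) = h(v)$ on $A$ and $g(v) = -h(v)$ on $B$ (again shifting first via Lemma~\ref{lem:shifting} to preserve injectivity). For an edge $uv$ with $u \in A$, $v \in B$, we get $g^+(uv) = h(u) - h(v) = \pm\, h^-(uv)$. Thus each value $\alpha$ in the range of $h^-$ gives rise to at most the two sum-values $\alpha$ and $-\alpha$, so $|g^+| \leq 2|h^-| = 2d(G)$, whence $s(G) \leq |g^+| \leq 2d(G)$ and the ceiling follows since $s(G)$ is an integer.

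The key structural fact making both directions work is that on a bipartite graph, consistently negating one side turns sums into differences and vice versa, because each edge has exactly one endpoint on each side. The genuine work is not in this algebraic identity but in ensuring injectivity of the negated labeling; I expect the shifting lemma to handle this cleanly, and I would verify in each direction that a sufficiently large shift separates the positive block of labels on $A$ from the negated block on $B$. I would present the upper bound first, since it is the more direct translation, and then the lower bound as its mirror image.
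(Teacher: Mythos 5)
Your proposal is correct and takes essentially the same route as the paper: the paper also negates the labels on one side of the bipartition and invokes the shifting lemmas to first make every label positive, after which injectivity of $g$ is immediate because $g(A)\subseteq\mathbb{Z}_{>0}$ and $g(B)\subseteq\mathbb{Z}_{<0}$ (a uniform shift of all labels suffices; no side-specific shift is needed). The only cosmetic difference is that the paper runs the construction once on a generic labeling $f$ and reads off both inequalities simultaneously from $g^-(E(G))=f^+(E(G))$ and $g^+(E(G))\subseteq\{x,-x:x\in f^-(E(G))\}$.
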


\begin{proof}
Let $A$ and $B$ be the two partite sets of $G$. Let $f$ be a vertex labeling of $G$, and in view of Lemmas~\ref{lem:shifting} and \ref{lem:dishifting}, we may assume that $f(v)>0$ for all $v\in V(G)$.

Consider $g:V(G)\to\Z$ such that 
$$g(v)=\begin{cases}
f(v)&\text{if }v\in A,\\
-f(v)&\text{if }v\in B.
\end{cases}$$
Note that $g$ is injective, since $f$ is injective and $f(v)>0$ for all $v\in V(G)$. For any edge $uv\in E(G)$, assume that $u\in A$ and $v\in B$. Then
$$g^-(uv)=|g(u)-g(v)|=|f(u)-(-f(v))|=f(u)+f(v)=f^+(uv)$$
and
$$g^+(uv)=g(u)+g(v)=f(u)-f(v)=\pm f^-(uv).$$
This implies that 
$$g^-(E(G))=f^+(E(G))$$
and
$$g^+(E(G))\subseteq\{x,-x\in\Z:x\in f^-(E(G))\}.$$
Therefore, $d(G)\leq s(G)$ and $s(G)\leq2d(G)$, and the result follows.
\end{proof}

\subsection{Difference index of complete graphs, complete bipartite graphs, caterpillars, cycles, spiders, wheels, and rectangular grids}

Parallel to Section~\ref{sec:sumindex}, we determine the difference index for several families of graphs. Similar to the sum index, an exact difference index can be found for certain families of graphs.

The following corollary is an immediate consequence of Lemma~\ref{lem:dishifting}.

\begin{corollary}\label{cor:di0vertex}
Let $v$ be a vertex of $G$. There exists a difference index labeling $f$ such that $f(v)=0$.
\end{corollary}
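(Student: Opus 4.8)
The plan is to derive this directly from Lemma~\ref{lem:dishifting}, exactly mirroring the role that Corollary~\ref{cor:0vertex} plays for the sum index via Lemma~\ref{lem:shifting}. The key observation is that the difference index $d(G)$ is achieved by \emph{some} difference index labeling, and the shifting lemma guarantees that we can translate any such labeling by a constant without changing the cardinality of the range of $f^-$. So the entire content of the corollary is a one-line reduction: take any optimal labeling and shift it so that a prescribed vertex receives the label $0$.

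Concretely, first I would invoke the definition of $d(G)$ to fix a difference index labeling $g$ of $G$, so that $|g^-| = d(G)$. Given the target vertex $v$, I would then define the constant $c = -g(v)$ and set $f(w) = g(w) + c$ for every $w \in V(G)$. By construction $f(v) = g(v) - g(v) = 0$, and $f$ is a vertex labeling (it remains injective, since adding a constant preserves injectivity). Finally, Lemma~\ref{lem:dishifting} gives $|f^-| = |g^-| = d(G)$, so $f$ is itself a difference index labeling with the desired property $f(v) = 0$.

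There is essentially no obstacle here; the statement is an immediate corollary precisely because all the work has been front-loaded into Lemma~\ref{lem:dishifting}. The only point worth stating carefully is that shifting preserves injectivity (hence $f$ is a legitimate vertex labeling, not merely a map with the right range size), but this is trivial. This parallels the sum-index development exactly: Corollary~\ref{cor:0vertex} stood in the same relationship to Lemma~\ref{lem:shifting}, and the proof there was likewise a single application of the shifting result.
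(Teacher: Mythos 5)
Your proof is correct and is exactly the argument the paper intends: the paper states the corollary as an immediate consequence of Lemma~\ref{lem:dishifting} without writing out a proof, and your one-line shift by $c=-g(v)$ applied to an optimal labeling is precisely that consequence. Nothing further is needed.
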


In the rest of this subsection, we determine the difference index of complete graphs, complete bipartite graphs, caterpillars, cycles, spiders, wheels, and rectangular grids.

\begin{theorem}\label{thm:dicomplete}
For every complete graph $K_n$, $d(K_n)=n-1$.
\end{theorem}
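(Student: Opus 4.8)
The plan is to prove $d(K_n) = n-1$ by establishing matching lower and upper bounds. For the lower bound, I would invoke Theorem~\ref{thm:dilowerbnd}, which gives $d(G) \geq \delta(G)$. Since every vertex of $K_n$ has degree $n-1$, we have $\delta(K_n) = n-1$, so $d(K_n) \geq n-1$ immediately. (One could alternatively argue directly: letting $v_0$ be the vertex of maximum label value, the $n-1$ differences $f(v_0) - f(v_i)$ to its neighbors are forced to be distinct positive integers by injectivity, giving $n-1$ distinct edge labels.)

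For the upper bound, the plan is to exhibit an explicit vertex labeling $f$ achieving $|f^-| = n-1$. The natural candidate is the consecutive labeling $f(v_i) = i$ for $1 \leq i \leq n$ (or $f(v_i) = i-1$, whichever is cleaner). With this labeling, for any edge $v_i v_j$ we have $f^-(v_i v_j) = |i - j|$, and as $i,j$ range over $\{1, 2, \dotsc, n\}$ the absolute differences take precisely the values $\{1, 2, \dotsc, n-1\}$. Thus $f^-(E(K_n)) = \{1, 2, \dotsc, n-1\}$, which has cardinality $n-1$, giving $d(K_n) \leq n-1$.

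Combining the two bounds yields $d(K_n) = n-1$. The main steps, in order, are: (1) cite Theorem~\ref{thm:dilowerbnd} to get $d(K_n) \geq \delta(K_n) = n-1$; (2) define the labeling $f(v_i) = i$ and verify injectivity (trivial, since the labels are distinct integers); (3) compute that $f^-$ realizes exactly the label set $\{1, 2, \dotsc, n-1\}$, establishing $d(K_n) \leq n-1$.

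I anticipate essentially no serious obstacle here, since the complete graph is the most symmetric case and the consecutive-integer labeling is tailor-made to minimize the number of distinct differences. The only point requiring a sentence of care is verifying that every value in $\{1, \dotsc, n-1\}$ is actually attained and that no value outside this range appears; this follows because the maximal difference is $|n - 1| = n-1$ and every gap $d \in \{1, \dotsc, n-1\}$ arises from the pair $v_1 v_{1+d}$. This is the tightest possible labeling, and it matches the lower bound exactly, so the proof is short and clean.
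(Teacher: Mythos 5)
Your proposal is correct and matches the paper's proof essentially verbatim: the paper also uses the consecutive labeling $f(v_i)=i$ to get $f^-(v_iv_j)=|i-j|\in\{1,2,\dotsc,n-1\}$ for the upper bound, and cites Theorem~\ref{thm:dilowerbnd} with $\delta(K_n)=n-1$ for the lower bound. No further comment is needed.
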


\begin{proof}
Let $v_1,v_2,\dotsc,v_n$ be the vertices of $K_n$. Define $f$ to be a vertex labeling of $K_n$ such that $f(v_i)=i$ for all $1\leq i\leq n$. Then $f^-(v_iv_j)=|i-j|\in\{1,2,\dotsc,n-1\}$, so $d(K_n)\leq|f^-|=n-1$. The result follows since $d(K_n)\geq\delta(K_n)=n-1$ by Theorem~\ref{thm:dilowerbnd}.
\end{proof}

\begin{theorem}
For every complete bipartite graph $K_{n,m}$, $d(K_{n,m})=\left\lceil\frac{n+m-1}{2}\right\rceil$.
\end{theorem}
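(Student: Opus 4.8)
We need to show $d(K_{n,m}) = \lceil (n+m-1)/2 \rceil$. I have a very natural tool at hand: Theorem~\ref{thm:dibipartite}, which says that for any bipartite graph $\left\lceil s(G)/2\right\rceil \leq d(G) \leq s(G)$. Since $K_{n,m}$ is bipartite and the sum index $s(K_{n,m}) = n+m-1$ is already known (quoted in the introduction from Harrington and Wong), the lower-bound half of Theorem~\ref{thm:dibipartite} immediately yields $d(K_{n,m}) \geq \lceil (n+m-1)/2\rceil$. So the entire difficulty is concentrated in the upper bound.

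**Plan for the upper bound.**

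Write $k = \lceil (n+m-1)/2\rceil$; the goal is to exhibit a vertex labeling $f$ with $|f^-| = k$. Let $A = \{a_1,\dots,a_n\}$ and $B = \{b_1,\dots,b_m\}$ be the two partite sets. The induced differences on edges are exactly the $nm$ values $|f(a_i) - f(b_j)|$, and every one of these must land in a set of size $k$. The cleanest strategy is to place the two sides on an arithmetic-like grid so that all pairwise differences collapse into a short interval of consecutive integers. Concretely, I would label side $A$ by $f(a_i) = i-1$ for $1 \le i \le n$, and label side $B$ by an arithmetic progression $f(b_j) = c + (j-1)d$ chosen so that the range of differences $\{\,|f(a_i)-f(b_j)|\,\}$ is forced into $\{0,1,\dots,k-1\}$ or some translate of it. Because differences of the form (point in $A$) $-$ (point in $B$) sweep out an interval whose length is governed by $n+m$, the right choice of offset $c$ (roughly centering the two progressions against each other) should make the absolute differences occupy exactly $\lceil(n+m-1)/2\rceil$ distinct values: intuitively, the $n+m-1$ distinct signed differences are symmetric about a center, so taking absolute values folds them in half.

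**Carrying out the construction.**

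The concrete calculation I anticipate is as follows. Use consecutive integers $0,1,\dots,n-1$ on $A$ and consecutive integers on $B$ chosen to overlap symmetrically, say $f(b_j) = j - 1 - \lfloor (n+m-1)/2\rfloor$ or a similar shift, so that the signed differences $f(a_i)-f(b_j)$ range over a symmetric interval $\{-(k-1),\dots,k-1\}$ (or $\{-(k-1),\dots,k\}$ when $n+m$ is even). Taking absolute values then folds this symmetric range onto $\{0,1,\dots,k-1\}$, giving $|f^-| = k$. Injectivity of $f$ is the one side-condition to check: since $A$ and $B$ are assigned overlapping integer ranges, a naive placement could repeat a label, so I would either scale one side (e.g. use even integers on $A$ and odd on $B$, a standard trick) or argue directly that the chosen offsets keep all $n+m$ labels distinct. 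I expect the bookkeeping around the parity of $n+m$ — matching the floor/ceiling in $k$ to whether the folded interval hits $k-1$ or $k$ distinct values — to be the fiddly part, but it is routine once the symmetric placement is fixed. The genuine conceptual content is just the observation that Theorem~\ref{thm:dibipartite} delivers the lower bound for free, so the whole theorem reduces to writing down one explicit symmetric labeling and verifying it folds correctly.
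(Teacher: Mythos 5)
Your overall strategy coincides with the paper's: the lower bound is obtained, exactly as you say, from Theorem~\ref{thm:dibipartite} together with $s(K_{n,m})=n+m-1$, and the upper bound from a labeling whose $n+m-1$ signed differences form a run centered at $0$ that the absolute value folds in half. However, the primary construction you propose --- consecutive integers $0,1,\dotsc,n-1$ on $A$ and a shifted block of consecutive integers on $B$ --- cannot be made to work, and not merely for injectivity reasons. With consecutive blocks on both sides, the signed differences $f(a_i)-f(b_j)$ form a block of $n+m-1$ \emph{consecutive} integers; for the fold to save anything, both signs must occur, which forces $0$ into that block, which means $f(a_i)=f(b_j)$ for some pair and injectivity fails. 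Even setting injectivity aside, when $n+m-1=2k$ the most symmetric placement $\{-(k-1),\dotsc,k\}$ folds to $\{0,1,\dotsc,k\}$, which has $k+1$ elements, one too many. The odd/even separation that you mention only in passing as an injectivity patch is therefore load-bearing for the count as well: giving one side odd labels and the other even labels makes every difference odd, so the $n+m-1$ signed differences form consecutive \emph{odd} integers, $0$ is excluded automatically, and a run of $n+m-1$ consecutive odd integers placed as symmetrically as possible about $0$ folds to exactly $\left\lceil\frac{n+m-1}{2}\right\rceil$ absolute values. This is precisely the paper's labeling ($f(u_i)=-2\lceil n/2\rceil+2i-1$ and $f(v_j)=-2\lceil m/2\rceil+2j$, with a harmless symmetry assumption on the parities of $n$ and $m$), so once you commit to the parity separation from the outset rather than as an afterthought, your argument becomes the paper's proof.
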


\begin{proof}
As mentioned in the introduction, Harrington and Wong showed that $s(K_{n,m})=n+m-1$. By Theorem~\ref{thm:dibipartite}, we have $d(K_{n,m})\geq\left\lceil\frac{n+m-1}{2}\right\rceil$. Hence, it suffices to find a vertex labeling $f$ such that $|f^-|\leq\left\lceil\frac{n+m-1}{2}\right\rceil$.

Let $u_1,u_2,\dotsc,u_n,v_1,v_2,\dotsc,v_m$ be the vertices of $K_{n,m}$, and assume without loss of generality that $n$ is even or $m$ is odd. Define $f:V(K_{n,m})\to\Z$ such that $f(u_i)=-2\left\lceil\frac{n}{2}\right\rceil+2i-1$ and $f(v_j)=-2\left\lceil\frac{m}{2}\right\rceil+2j$ for all $1\leq i\leq n$ and $1\leq j\leq m$. Note that $f$ is injective since $f(u_i)$ is odd and $f(v_j)$ is even for all $1\leq i\leq n$ and $1\leq j\leq m$. Furthermore, for all $1\leq i\leq n$ and $1\leq j\leq m$,
\begin{equation}\label{eqn:dibipartite}
f(u_i)-f(v_j)=2\left\lceil\frac{m}{2}\right\rceil-2\left\lceil\frac{n}{2}\right\rceil-1+2(i-j),
\end{equation}
which is an odd integer. Equation~\eqref{eqn:dibipartite} attains its maximum when $i=n$ and $j=1$ and its minimum when $i=1$ and $j=m$. Hence,
\begin{align*}
&\hspace{14pt}\max\{f^-(u_iv_j):1\leq i\leq n,1\leq j\leq m\}\\
&=\max\left\{\left|2\left\lceil\frac{m}{2}\right\rceil-2\left\lceil\frac{n}{2}\right\rceil-1+2(n-1)\right|,\left|2\left\lceil\frac{m}{2}\right\rceil-2\left\lceil\frac{n}{2}\right\rceil-1+2(1-m)\right|\right\}\\
&=\max\left\{\left|2\left\lceil\frac{m}{2}\right\rceil+2\left\lfloor\frac{n}{2}\right\rfloor-3\right|,\left|2\left\lfloor\frac{m}{2}\right\rfloor+2\left\lceil\frac{n}{2}\right\rceil-1\right|\right\}\\
&=\max\left\{2\left\lceil\frac{m}{2}\right\rceil+2\left\lfloor\frac{n}{2}\right\rfloor-3,2\left\lfloor\frac{m}{2}\right\rfloor+2\left\lceil\frac{n}{2}\right\rceil-1\right\}\\
&=2\left\lfloor\frac{m}{2}\right\rfloor+2\left\lceil\frac{n}{2}\right\rceil-1,
\end{align*}
where the second last equality is due to $2\left\lceil\frac{m}{2}\right\rceil+2\left\lfloor\frac{n}{2}\right\rfloor-3\geq-1$ and $2\left\lfloor\frac{m}{2}\right\rfloor+2\left\lceil\frac{n}{2}\right\rceil-1\geq1$, and the last equality is due to $2\left\lceil\frac{m}{2}\right\rceil-3\leq2\left\lfloor\frac{m}{2}\right\rfloor-1$ and $2\left\lfloor\frac{n}{2}\right\rfloor\leq2\left\lceil\frac{n}{2}\right\rceil$. As a result, 
$$f^-(E(K_{n,m}))\subseteq\left\{x\in\N:x\text{ is odd and }x\leq2\left\lfloor\frac{m}{2}\right\rfloor+2\left\lceil\frac{n}{2}\right\rceil-1\right\},$$
and hence,
\begin{align*}
|f^-|&\leq\left\lfloor\frac{m}{2}\right\rfloor+\left\lceil\frac{n}{2}\right\rceil\\
&=\begin{cases}
\left\lceil\frac{m-1}{2}\right\rceil+\frac{n}{2}&\text{if $n$ is even},\\
\frac{m-1}{2}+\left\lceil\frac{n}{2}\right\rceil&\text{if $n$ is odd}
\end{cases}\\
&=\left\lceil\frac{n+m-1}{2}\right\rceil.
\end{align*}
\end{proof}

A caterpillar graph is a tree that consists of a central path and every other vertex is distance $1$ away from a cut vertex on this central path.

\begin{theorem}
The difference index of a caterpillar graph $G$ is $\left\lceil\frac{\Delta}{2}\right\rceil$, where $\Delta$ is the maximum degree of $G$.
\end{theorem}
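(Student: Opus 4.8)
The lower bound is immediate from the results already established. Since $G$ is a tree it is bipartite, so $\chi'(G)=\Delta$, and Theorem~\ref{thm:dilowerbnd} gives $d(G)\geq\lceil\chi'(G)/2\rceil=\lceil\Delta/2\rceil$. (Directly: if $v$ is a vertex of degree $\Delta$ with neighbors $u_1,\dots,u_\Delta$, then for any difference index labeling $f$ the integers $f(v)-f(u_j)$ are distinct, and each absolute value is attained by at most two of them, so $f^-$ already takes at least $\lceil\Delta/2\rceil$ values on the edges at $v$.) It therefore remains to exhibit a labeling $f$ with $|f^-|\leq\lceil\Delta/2\rceil$, and the plan is to spread the spine out on a coarse scale while packing each vertex's legs into small symmetric offsets.

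Write $D=\lceil\Delta/2\rceil$, let $s_0s_1\dots s_L$ be the spine, let $\ell_{i,1},\dots,\ell_{i,k_i}$ be the legs at $s_i$, and fix a large integer, say $M=2\Delta$. If $L=0$ the graph is a star and we simply take the offsets $\pm\{1,\dots,D\}$, so assume $L\geq1$. I would set $f(s_i)=Mi$, so that every spine edge contributes the single difference $M$. For the legs of an \emph{internal} spine vertex $s_i$ I assign labels $Mi\pm1,Mi\pm2,\dots$, using each offset with both signs; since $k_i=\deg(s_i)-2\leq\Delta-2$, the number of distinct offsets needed is $\lceil k_i/2\rceil\leq\lceil(\Delta-2)/2\rceil=D-1$ in all parities of $\Delta$, so these legs use only the differences $1,2,\dots,D-1$. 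Their labels lie in $(Mi-(D-1),\,Mi+(D-1))$, and because $M>2(D-1)$ these intervals are pairwise disjoint and avoid the spine labels $Mi$, so injectivity holds automatically here.

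The only delicate point is a spine endpoint that carries the maximum degree when $\Delta$ is even: then $s_0$ (say) has $k_0=\Delta-1$ legs, one too many to fit into $\pm\{1,\dots,D-1\}$ (which offers only $2(D-1)=\Delta-2$ slots). Here I would exploit that an endpoint meets just one spine edge, namely $s_0s_1$ with signed difference $+M$; this leaves the signed value $-M$ free for a single leg. So I label one leg of $s_0$ by $-M$ (and symmetrically one leg of $s_L$ by $M(L+1)$), which reuses the already-present difference $M$ and introduces no new label collision, since a label $-M$ cannot coincide with any $Mi\pm(\text{offset})$ when the offset is smaller than $M$. The remaining $\Delta-2$ legs of $s_0$ then distribute exactly over $\pm\{1,\dots,D-1\}$. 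When $\Delta$ is odd, or when the endpoint does not attain the maximum degree, one checks $\lceil k_0/2\rceil\leq D-1$ and this special treatment is unnecessary.

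Collecting the differences used, the spine contributes $M$ and every leg contributes a value in $\{1,2,\dots,D-1\}\cup\{M\}$, whence $|f^-|\leq D=\lceil\Delta/2\rceil$; together with the lower bound this gives $d(G)=\lceil\Delta/2\rceil$. I expect the main obstacle to be exactly the slot-counting at a spine endpoint of even maximum degree, where the symmetric-offset pattern overflows by one and must borrow the coarse difference $M$ with the opposite sign; everywhere else the naive assignment already fits, and the injectivity check reduces to the single observation that consecutive spine labels are separated by $M$, which exceeds every offset used.
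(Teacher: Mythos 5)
Your proof is correct and follows essentially the same strategy as the paper's: the lower bound via Theorem~\ref{thm:dilowerbnd}, and for the upper bound a labeling that places the spine on an arithmetic progression with a large common difference while packing each spine vertex's legs into symmetric offsets $\pm1,\pm2,\dots$, so that the only differences realized are $1,\dots,\lceil\Delta/2\rceil-1$ together with the spine step. The paper sidesteps your endpoint overflow case by choosing the central path so that its two ends are leaves (hence legs attach only to internal spine vertices), but your explicit fix of borrowing the spine difference $M$ with the opposite sign is a valid alternative.
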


\begin{proof}
By Theorem~\ref{thm:dilowerbnd}, $d(G)\geq\left\lceil\frac{\chi'(G)}{2}\right\rceil\geq\left\lceil\frac{\Delta}{2}\right\rceil$. Hence, it suffices to find a vertex labeling $f:V(G)\to\Z$ such that $|f^-|=\left\lceil\frac{\Delta}{2}\right\rceil$.

Let $v_1,v_2,\dotsc,v_n$ be the vertices of the central path of $G$. For each $2\leq i\leq n-1$, if $\deg(v_i)\geq3$, then let $\{u_{ij}:1\leq j\leq\deg(v_i)-2\}$ be the set of neighbors of $v_i$ other than $v_{i-1}$ and $v_{i+1}$. Let $f:V(G)\to\N$ be defined such that $f(v_i)=i\Delta$ for all $1\leq i\leq n$ and
$$f(u_{ij})=\begin{cases}
i\Delta+j-\left\lfloor\frac{\deg(v_i)}{2}\right\rfloor&\text{if }1\leq j\leq\left\lfloor\frac{\deg(v_i)-2}{2}\right\rfloor,\vspace{1pt}\\
i\Delta+j-\left\lfloor\frac{\deg(v_i)-2}{2}\right\rfloor&\text{if }\left\lfloor\frac{\deg(v_i)}{2}\right\rfloor\leq j\leq\deg(v_i)-2.\\
\end{cases}$$
Figure~\ref{fig:caterpillar} shows the vertex labeling $f$ on a caterpillar graph $G$.
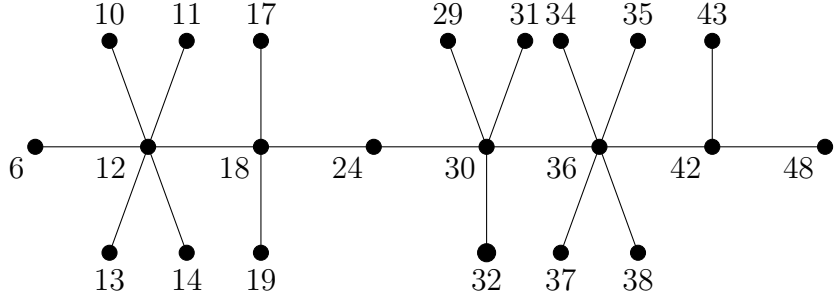
\begin{figure}[H]
\centering
\begin{tikzpicture}[scale=1.5]
\def\r{0.1/1.5};
\foreach \i in {0,1,2,...,7}{\filldraw (\i,0) circle (\r);}; \draw (0,0)--(7,0); \filldraw ({1+cos(70)},{sin(70)}) circle (\r); \filldraw ({1+cos(110)},{sin(110)}) circle (\r); \filldraw ({1+cos(250)},{sin(250)}) circle (\r); \filldraw ({1+cos(290)},{sin(290)}) circle (\r); \draw ({1+cos(70)},{sin(70)})--({1+cos(250)},{sin(250)}); \draw ({1+cos(110)},{sin(110)})--({1+cos(290)},{sin(290)}); \filldraw ({2+cos(90)},{sin(70)}) circle (\r); \filldraw ({2+cos(270)},{sin(250)}) circle (\r); \draw ({2+cos(90)},{sin(70)})--({2+cos(270)},{sin(250)}); \filldraw ({4+cos(70)},{sin(70)}) circle (\r); \filldraw ({4+cos(110)},{sin(110)}) circle (\r); \filldraw ({4+cos(270)},{sin(250)}) circle (0.08); \draw ({4+cos(70)},{sin(70)})--(4,0); \draw ({4+cos(110)},{sin(110)})--(4,0); \draw ({4+cos(270)},{sin(250)})--(4,0); \filldraw ({5+cos(70)},{sin(70)}) circle (\r); \filldraw ({5+cos(110)},{sin(70)}) circle (\r); \filldraw ({5+cos(250)},{sin(250)}) circle (\r); \filldraw ({5+cos(290)},{sin(290)}) circle (\r); \draw ({5+cos(70)},{sin(70)})--({5+cos(250)},{sin(250)}); \draw ({5+cos(110)},{sin(110)})--({5+cos(290)},{sin(290)}); \filldraw ({6+cos(90)},{sin(70)}) circle (\r); \draw ({6+cos(90)},{sin(70)})--(6,0); \node[below left] at (0,0){$6$}; \node[below left] at (0.9,0){$12$}; \node[below left] at (2,0){$18$}; \node[below left] at (3,0){$24$}; \node[below left] at (4,0){$30$}; \node[below left] at (4.9,0){$36$}; \node[below left] at (6,0){$42$}; \node[below left] at (7,0){$48$}; \node[above] at ({1+cos(110)},{sin(110)+0.05}){$10$}; \node[above] at ({1+cos(70)},{sin(70)+0.05}){$11$}; \node[below] at ({1+cos(250)},{sin(250)-0.05}){$13$}; \node[below] at ({1+cos(290)},{sin(290)-0.05}){$14$}; \node[above] at ({2+cos(90)},{sin(70)+0.05}){$17$}; \node[below] at ({2+cos(270)},{sin(250)-0.05}){$19$}; \node[above] at ({4+cos(110)},{sin(110)+0.05}){$29$}; \node[above] at ({4+cos(70)},{sin(70)+0.05}){$31$}; \node[below] at ({4+cos(270)},{sin(250)-0.05}){$32$}; \node[above] at ({5+cos(110)},{sin(110)+0.05}){$34$}; \node[above] at ({5+cos(70)},{sin(70)+0.05}){$35$}; \node[below] at ({5+cos(250)},{sin(250)-0.05}){$37$}; \node[below] at ({5+cos(290)},{sin(290)-0.05}){$38$}; \node[above] at ({6+cos(90)},{sin(70)+0.05}){$43$};
\end{tikzpicture}
\caption{A caterpillar graph with $\Delta=6$ and vertex labeling $f$}
\label{fig:caterpillar}
\end{figure}

To show that $f$ is injective, we note that for each $2\leq i\leq n-1$, $f(u_{ij})<i\Delta=f(v_i)$ if $1\leq j\leq\left\lfloor\frac{\deg(v_i)-2}{2}\right\rfloor$ and $f(u_{ij})>i\Delta=f(v_i)$ if $\left\lfloor\frac{\deg(v_i)}{2}\right\rfloor\leq j\leq\deg(v_i)-2$. Moreover, for each $2\leq i\leq n-2$, $1\leq j\leq\deg(v_i)-2$, and $1\leq j'\leq\deg(v_{i+1})-2$,
\begin{align*}
f(u_{ij})\leq i\Delta+\left\lceil\tfrac{\deg(v_i)-2}{2}\right\rceil&\leq i\Delta+\left\lceil\tfrac{\Delta}{2}\right\rceil-1\\
&<(i+1)\Delta+1-\left\lfloor\tfrac{\Delta}{2}\right\rfloor\leq(i+1)\Delta+1-\left\lfloor\tfrac{\deg(v_{i+1})}{2}\right\rfloor\leq f(u_{(i+1)j'}).
\end{align*}
Hence, $f$ is injective. As for the range of $f^-$, for each $2\leq i\leq n-1$, $f^-(v_iu_{ij})=f(v_i)-f(u_{ij})\in\left\{1,2,\dotsc,\left\lfloor\frac{\deg(v_i)}{2}\right\rfloor-1\right\}$ if $1\leq j\leq\left\lfloor\frac{\deg(v_i)-2}{2}\right\rfloor$, and $f^-(v_iu_{ij})=f(u_{ij})-f(v_i)\in\left\{1,2,\dotsc,\left\lceil\frac{\deg(v_i)-2}{2}\right\rceil\right\}$ if $\left\lfloor\frac{\deg(v_i)}{2}\right\rfloor\leq j\leq\deg(v_i)-2$. Furthermore, for each $1\leq i\leq n-1$, $f(v_iv_{i+1})=f(v_{i+1})-f(v_i)=\Delta$. Therefore, the range of $f^-$ is $\left\{1,2,\dotsc,\left\lceil\frac{\Delta-2}{2}\right\rceil,\Delta\right\}$, which has cardinality $\left\lceil\frac{\Delta}{2}\right\rceil$.
\end{proof}

\begin{theorem}\label{thm:dicycle}
Let $n\geq3$ be an integer. Then $d(C_n)=2$.
\end{theorem}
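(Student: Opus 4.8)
The plan is to establish matching lower and upper bounds. For the lower bound, I would invoke Theorem~\ref{thm:dilowerbnd}: since every vertex of $C_n$ has degree exactly $2$, we have $\delta(C_n)=2$, and therefore $d(C_n)\geq\max\left\{\left\lceil\frac{\chi'(C_n)}{2}\right\rceil,\delta(C_n)\right\}\geq 2$. (Concretely, at the vertex where a difference index labeling $f$ attains its maximum, the two incident edges must receive two distinct $f^-$-values by the injectivity of $f$, so at least two edge labels are forced.)

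For the upper bound, I would exhibit an explicit labeling achieving $|f^-|=2$. Writing $C_n$ as the cycle $v_0v_1\dotsb v_{n-1}v_0$, define $f(v_i)=i$ for all $0\leq i\leq n-1$, which is visibly injective. Along each path edge $v_iv_{i+1}$ with $0\leq i\leq n-2$ we get $f^-(v_iv_{i+1})=|i-(i+1)|=1$, while the single wraparound edge yields $f^-(v_{n-1}v_0)=n-1$. Hence $f^-(E(C_n))=\{1,n-1\}$, and since $n\geq 3$ forces $n-1\geq 2\neq 1$, these two labels are distinct, so $|f^-|=2$ and $d(C_n)\leq 2$.

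Combining the two bounds gives $d(C_n)=2$. I expect no genuine obstacle here: unlike the sum-index computation in Theorem~\ref{thm:sumcycles}, where the alternating labeling $f(v_i)=(-1)^ii$ was needed to keep all path edges at a common sum, for the difference index a plain increasing labeling already concentrates every edge but one at the common value $1$. The only place the hypothesis $n\geq 3$ is used is to guarantee that the wraparound label $n-1$ is different from $1$, which is exactly what keeps $|f^-|$ from collapsing to a single value.
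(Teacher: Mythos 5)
Your proposal is correct and matches the paper's proof essentially verbatim: the same increasing labeling $f(v_i)=i$ giving $f^-(E(C_n))=\{1,n-1\}$ for the upper bound, and the same appeal to Theorem~\ref{thm:dilowerbnd} via $\delta(C_n)=2$ for the lower bound. Nothing to add.
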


\begin{proof}
Let $C_n$ be the cycle $v_0v_1v_2\dotsb v_{n-1}v_0$. Define $f$ to be a vertex labeling of $C_n$ such that $f(v_i)=i$ for all $0\leq i\leq n-1$. Then $f^-(v_iv_{i+1})=|f(v_i)-f(v_{i+1})|=1$ for all $0\leq i\leq n-2$, and $f^-(v_{n-1}v_0)=|f(v_{n-1})-f(v_0)|=n-1\neq1$. As a result, $|f^-|=2$, so $d(C_n)\leq2$. The result follows since $d(C_n)\geq\delta(C_n)=2$ by Theorem~\ref{thm:dilowerbnd}.
\end{proof}

With a similar vertex labeling as in the proof of Theorem~\ref{thm:dicycle}, and together with the proof of Theorem~\ref{thm:dilowerbnd}, we have the following corollary.

\begin{corollary}\label{cor:dipath}
A graph $G$ satisfies $d(G)=1$ if and only if $G$ is a disjoint union of paths.
\end{corollary}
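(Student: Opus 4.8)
The plan is to prove the biconditional in two directions, reusing the structural observation from the proof of Theorem~\ref{thm:dilowerbnd} together with the value $d(C_n)=2$ from Theorem~\ref{thm:dicycle}.

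For the direction that a disjoint union of paths has difference index $1$, I would mimic the labeling in the proof of Theorem~\ref{thm:dicycle}. On each path component, assign consecutive integers along the path, so that every edge contributes difference $1$. To guarantee injectivity of $f$ across the different components, I would translate the labels of each component into pairwise disjoint intervals of $\Z$; since $G$ has no edges between components, such translations leave every edge difference unchanged (this is the content of Lemma~\ref{lem:dishifting} applied componentwise). The resulting labeling satisfies $f^-(E(G))=\{1\}$, so $d(G)\leq 1$; since $G$ is nonempty we also have $d(G)\geq 1$, hence $d(G)=1$.

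For the converse, suppose $d(G)=1$ and let $f$ be a difference index labeling, so that $f^-(E(G))=\{\alpha\}$ for a single positive integer $\alpha$. The key input is the observation established inside the proof of Theorem~\ref{thm:dilowerbnd}: a vertex incident to three edges all carrying the same $f^-$-value would force two of its neighbors to receive equal labels, contradicting injectivity. Thus every vertex of $G$ has degree at most $2$, so $G$ is a disjoint union of paths and cycles.

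The remaining step, and the only real obstacle, is to rule out cycle components. Here I would invoke Theorem~\ref{thm:dicycle}: if $G$ contained a cycle $C_n$ as a component, then restricting $f$ to $C_n$ would yield an edge labeling whose range is $\{\alpha\}$, giving $d(C_n)\leq 1$ and contradicting $d(C_n)=2$. Equivalently, one could argue directly that tracing a cycle with all differences equal to $\alpha$ forces the vertex labels to walk along an arithmetic progression with common difference $\alpha$; injectivity makes this walk monotone, so the labels around a cycle of length at least $3$ are $f(v_0),f(v_0)+\alpha,\dotsc,f(v_0)+(n-1)\alpha$, and the closing edge then has difference $(n-1)\alpha\neq\alpha$, a contradiction. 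Therefore $G$ has no cycle component and is a disjoint union of paths, completing the proof.
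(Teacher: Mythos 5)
Your proof is correct and follows essentially the same route the paper indicates: consecutive integer labels (shifted into disjoint intervals per component) give $d(G)\leq1$ for unions of paths, and the three-same-labeled-edges argument from the proof of Theorem~\ref{thm:dilowerbnd} plus the exclusion of cycles (via Theorem~\ref{thm:dicycle} or your direct arithmetic-progression argument) gives the converse. Both of your suggested ways to rule out cycle components are valid, and your observation that injectivity forces the walk around a cycle to be monotone is correctly justified.
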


\begin{theorem} 
For every spider $S_{\ell_1,\ell_2,\dotsc,\ell_\Delta}$, $d(S_{\ell_1,\ell_2,\dotsc,\ell_\Delta})=\left\lceil\frac{\Delta}{2}\right\rceil$.
\end{theorem}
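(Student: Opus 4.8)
The plan is to match the general lower bound from Theorem~\ref{thm:dilowerbnd} with an explicit vertex labeling that uses exactly $\left\lceil\frac{\Delta}{2}\right\rceil$ distinct absolute differences. For the lower bound I would avoid invoking the chromatic index directly and instead observe that a spider is a tree, hence bipartite, and that Theorem~\ref{thm:spider} gives $s(S_{\ell_1,\ell_2,\dotsc,\ell_\Delta})=\Delta$; feeding this into Theorem~\ref{thm:dibipartite} yields $d(S_{\ell_1,\ell_2,\dotsc,\ell_\Delta})\ge\left\lceil\frac{s}{2}\right\rceil=\left\lceil\frac{\Delta}{2}\right\rceil$ at once, so only a construction attaining this value remains.

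Set $\alpha=\left\lceil\frac{\Delta}{2}\right\rceil$ and $f(v_0)=0$. The guiding observation is that the $\Delta$ spokes at the center can only reuse a difference value $c$ by placing one neighbor at $+c$ and another at $-c$, so at least $\left\lceil\frac{\Delta}{2}\right\rceil$ values are forced; I would make this tight by pairing spokes through signs. Concretely, choose distinct pairs $(\epsilon_i,c_i)\in\{+1,-1\}\times\{1,2,\dotsc,\alpha\}$, one for each leg $i$ (using all $2\alpha$ pairs when $\Delta$ is even, and omitting a single pair, say $(-1,\alpha)$, when $\Delta$ is odd, so that exactly $\Delta$ pairs remain), and define $f(v_{i,j})=\epsilon_i\bigl(c_i+(j-1)\alpha\bigr)$ for $1\le i\le\Delta$ and $1\le j\le\ell_i$. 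Thus each leg runs along an arithmetic progression of common difference $\alpha$ that moves away from the origin in the direction of its sign.

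Next I would verify injectivity: every leg label is nonzero with sign $\epsilon_i$, so two equal labels must share a sign; reducing $c_i+(j-1)\alpha\pmod{\alpha}$ and using that $1,2,\dotsc,\alpha$ are pairwise incongruent modulo $\alpha$ forces $c_i=c_{i'}$ and then $j=j'$, whence $(\epsilon_i,c_i)=(\epsilon_{i'},c_{i'})$ and therefore $i=i'$. For the range of $f^-$, each spoke gives $f^-(v_0v_{i,1})=c_i\in\{1,\dotsc,\alpha\}$ and each interior edge gives $f^-(v_{i,j}v_{i,j+1})=\alpha$, so $f^-(E(S_{\ell_1,\ell_2,\dotsc,\ell_\Delta}))\subseteq\{1,2,\dotsc,\alpha\}$ and $|f^-|\le\alpha=\left\lceil\frac{\Delta}{2}\right\rceil$, which together with the lower bound closes the argument.

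The main obstacle is the simultaneous-injectivity step: I must route all $\Delta$ legs so that no two labels collide while confining every edge difference to the $\left\lceil\frac{\Delta}{2}\right\rceil$-element set $\{1,\dotsc,\alpha\}$. This is precisely what the choice of common difference $\alpha$ together with distinct residues $c_i$ buys, since legs of the same sign then occupy arithmetic progressions with distinct residues modulo $\alpha$ and are hence disjoint, while opposite signs separate the remaining legs; the only place requiring care is the parity of $\Delta$, which dictates whether all $2\alpha$ sign--residue pairs are available or exactly one must be discarded.
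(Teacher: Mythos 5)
Your proof is correct and the construction is essentially the paper's: your pairs $(\epsilon_i,c_i)$ are exactly the paper's $\bigl((-1)^{\Delta-i},\lceil\frac{i+\xi}{2}\rceil\bigr)$, giving the same labeling $f(v_{i,j})=\epsilon_i\bigl(c_i+(j-1)\alpha\bigr)$ with spokes covering $\{1,\dots,\alpha\}$ and interior edges contributing only $\alpha$. The only (immaterial) difference is the lower bound, where you route through Theorems~\ref{thm:dibipartite} and \ref{thm:spider} instead of the paper's direct appeal to $d(G)\geq\left\lceil\frac{\chi'(G)}{2}\right\rceil$ from Theorem~\ref{thm:dilowerbnd}; both are valid one-line deductions from results already established.
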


\begin{proof}
By Theorem~\ref{thm:dilowerbnd}, $d(S_{\ell_1,\ell_2,\dotsc,\ell_\Delta})\geq\left\lceil\frac{\chi'(S_{\ell_1,\ell_2,\dotsc,\ell_\Delta})}{2}\right\rceil\geq\left\lceil\frac{\Delta}{2}\right\rceil$. Hence, it suffices to find a vertex labeling $f$ such that $|f^-|=\left\lceil\frac{\Delta}{2}\right\rceil$.

Similar to the proof of Theorem~\ref{thm:spider}, let $\xi$ be the smallest nonnegative integer such that $\Delta\mod{\xi}{2}$, and let $\alpha=\frac{\Delta+\xi}{2}$. Define $f(v_0)=0$. For all $1\leq i\leq\Delta$ and $1\leq j\leq\ell_i$, define
$$f(v_{i,j})=(-1)^{\Delta-i}\left((j-1)\alpha+\left\lceil\frac{i+\xi}{2}\right\rceil\right).$$
Figure~\ref{fig:dispider} shows the vertex labeling $f$ on the spider $S_{3,1,2,4,2,3,4}$.
\begin{figure}[H]
\centering
\begin{tikzpicture}[scale=1.5]
\foreach\r in{0.1/1.5}{\filldraw(0,0)circle(\r);\foreach\i in{1,2,3}{\filldraw({\i*cos(0*360/7)},{\i*sin(0*360/7)})circle(\r);}\foreach\i in{1}{\filldraw({\i*cos(1*360/7)},{\i*sin(1*360/7)})circle(\r);}\foreach\i in{1,2}{\filldraw({\i*cos(2*360/7)},{\i*sin(2*360/7)})circle(\r);}\foreach\i in{1,2,3,4}{\filldraw({\i*cos(3*360/7)},{\i*sin(3*360/7)})circle(\r);}\foreach\i in{1,2}{\filldraw({\i*cos(4*360/7)},{\i*sin(4*360/7)})circle(\r);}\foreach\i in{1,2,3}{\filldraw({\i*cos(5*360/7)},{\i*sin(5*360/7)})circle(\r);}\foreach\i in{1,2,3,4}{\filldraw({\i*cos(6*360/7)},{\i*sin(6*360/7)})circle(\r);}}\foreach[count=\i]\j in{3,1,2,4,2,3,4}{\draw(0,0)--({\j*cos((\i-1)*360/7)},{\j*sin((\i-1)*360/7)});}
\node[left]at(-0.1,0){$0$};\node[above right]at({1*cos(0*360/7)},{1*sin(0*360/7)}){$1$};\node[above right]at({2*cos(0*360/7)},{2*sin(0*360/7)}){$5$};\node[above right]at({3*cos(0*360/7)},{3*sin(0*360/7)}){$9$};\node[above right]at({1*cos(1*360/7)},{1*sin(1*360/7)}){$-2$};\node[above right]at({1*cos(2*360/7)},{1*sin(2*360/7)}){$2$};\node[above right]at({2*cos(2*360/7)},{2*sin(2*360/7)}){$6$};\node[above right]at({1*cos(3*360/7)},{1*sin(3*360/7)}){$-3$};\node[above right]at({2*cos(3*360/7)},{2*sin(3*360/7)}){$-7$};\node[above right]at({3*cos(3*360/7)},{3*sin(3*360/7)}){$-11$};\node[above right]at({4*cos(3*360/7)},{4*sin(3*360/7)}){$-15$};\node[above left]at({1*cos(4*360/7)},{1*sin(4*360/7)}){$3$};\node[above left]at({2*cos(4*360/7)},{2*sin(4*360/7)}){$7$};\node[above left]at({1*cos(5*360/7)},{1*sin(5*360/7)}){$-4$};\node[above left]at({2*cos(5*360/7)},{2*sin(5*360/7)}){$-8$};\node[above left]at({3*cos(5*360/7)},{3*sin(5*360/7)}){$-12$};\node[above right]at({1*cos(6*360/7)},{1*sin(6*360/7)}){$4$};\node[above right]at({2*cos(6*360/7)},{2*sin(6*360/7)}){$8$};\node[above right]at({3*cos(6*360/7)},{3*sin(6*360/7)}){$12$};\node[above right]at({4*cos(6*360/7)},{4*sin(6*360/7)}){$16$};
\end{tikzpicture}
\caption{The vertex labeling $f$ on the spider $S_{3,1,2,4,2,3,4}$}
\label{fig:dispider}
\end{figure}

The proof that $f$ is injective and hence a vertex labeling is the same as in the proof of Theorem~\ref{thm:spider}. To verify that $|f^-|=\left\lceil\frac{\Delta}{2}\right\rceil$, we show that
$$f^-(E(S_{\ell_1,\ell_2,\dotsc,\ell_\Delta}))=\{f^-(v_0v_{i,1}):1\leq i\leq\Delta\}=\left\{1,2,\dotsc,\left\lceil\frac{\Delta}{2}\right\rceil\right\},$$
where the second equality is obvious. For each $1\leq i\leq\Delta$ and $1\leq j\leq\ell_i-1$,
$$f^-(v_{i,j}v_{i,j+1})=\left|(-1)^{\Delta-i}\left((j-1)\alpha+\left\lceil\frac{i+\xi}{2}\right\rceil\right)-(-1)^{\Delta-i}\left(j\alpha+\left\lceil\frac{i+\xi}{2}\right\rceil\right)\right|=\alpha,$$
which is an element of $\{f^-(v_0v_{i,1}):1\leq i\leq\Delta\}$ since $\alpha=\left\lceil\frac{\Delta}{2}\right\rceil$.
\end{proof}

\begin{theorem}\label{thm:diwheel}
Let $\Delta\geq3$ be an integer, and let $W_{\Delta}$ be the wheel graph with maximum degree $\Delta$. Then $d(W_\Delta)=\max\left\{3,\left\lceil\frac{\Delta}{2}\right\rceil\right\}$.
\end{theorem}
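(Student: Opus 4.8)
The plan is to establish the lower bound directly from Theorem~\ref{thm:dilowerbnd} and then match it with an explicit vertex labeling. Write $n=\Delta$, so that $W_\Delta$ consists of the hub $v_0$ together with the rim cycle $v_1v_2\cdots v_nv_1$, and set $m=\lceil n/2\rceil$. For the lower bound, observe that every rim vertex has degree $3$ while the hub has degree $n\geq3$, so $\delta(W_\Delta)=3$; moreover $\chi'(W_\Delta)\geq\Delta(W_\Delta)=\Delta$. Theorem~\ref{thm:dilowerbnd} then gives $d(W_\Delta)\geq\max\{\lceil\chi'(W_\Delta)/2\rceil,\delta(W_\Delta)\}\geq\max\{3,\lceil\Delta/2\rceil\}$, which is exactly the claimed value; no finer information about $\chi'(W_\Delta)$ is needed.

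For the upper bound I would invoke Corollary~\ref{cor:di0vertex} to assume $f(v_0)=0$, so that each spoke satisfies $f^-(v_0v_i)=|f(v_i)|$. Since the hub lies on all $n$ spokes and, by the argument in the proof of Theorem~\ref{thm:dilowerbnd}, each difference value occurs on at most two edges through a fixed vertex, at least $\lceil n/2\rceil=m$ spoke values are forced; to attain this minimum the rim labels must come in $\pm$ pairs. It is therefore natural to take the label multiset to be $\{\pm1,\pm2,\dots,\pm m\}$ when $n$ is even and $\{\pm1,\dots,\pm(m-1),m\}$ when $n$ is odd, making the spoke differences exactly $\{1,2,\dots,m\}$. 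The real work is then to \emph{order} these labels around the rim so that the consecutive cyclic differences also stay inside $\{1,\dots,m\}$.

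I would read the labels around the cycle in the order
$$1,\ m,\ m-1,\ \dots,\ 2,\ -1,\ -m,\ -(m-1),\ \dots,\ -2$$
when $n=2m$, and in the analogous order with negative block $-1,-(m-1),\dots,-2$ when $n=2m-1$. Injectivity is immediate since the labels are distinct and nonzero. For the difference computation, within each sign block all consecutive differences equal $1$ except for the opening ``long jump'' $1\to m$ (respectively $-1\to -m$ or $-1\to-(m-1)$), whose length is $m-1$ (respectively $m-2$); the two edges joining the positive block to the negative block, namely $2\to-1$ and the wrap-around $-2\to1$, each have difference $3$. Hence every rim difference lies in $\{1,3,m-1\}$ (or $\{1,3,m-1,m-2\}$ for odd $n$), which is a subset of $\{1,\dots,m\}$ once $m\geq3$. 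Combining with the spoke differences $\{1,\dots,m\}$ gives $f^-(E(W_\Delta))=\{1,\dots,m\}$, so $|f^-|=m=\lceil\Delta/2\rceil$ whenever $\Delta\geq5$. For $\Delta\in\{3,4\}$ we have $m=2$, the crossing difference $3$ exceeds $m$, and the range becomes $\{1,2,3\}$ of size $3=\max\{3,\lceil\Delta/2\rceil\}$; this matches $d(W_3)=d(K_4)=3$ from Theorem~\ref{thm:dicomplete}.

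The step I expect to be the crux is the ordering in the previous paragraph. The $\pm$-pair structure of the labels is essentially forced by the spoke count, but a careless arrangement (for instance, all positive labels in increasing order followed by all negative labels) creates a difference of size $2m$ at the sign change or at the wrap-around. Controlling these two ``crossing'' edges, by starting each block at $\pm1$ and jumping immediately to the far end of the block so that the crossings have length only $3$, together with checking that the single long jump inside each block stays at most $m$, is the only delicate verification; the remaining differences are all $1$, and the separate bookkeeping for even versus odd $n$ is routine.
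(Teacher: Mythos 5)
Your proof is correct, and while the overall strategy (lower bound from Theorem~\ref{thm:dilowerbnd} plus an explicit labeling) matches the paper's, you diverge in two substantive ways. First, for the lower bound you observe that every rim vertex of $W_\Delta$ has degree $3$, so $\delta(W_\Delta)=3$ and Theorem~\ref{thm:dilowerbnd} immediately yields $d(W_\Delta)\geq\max\left\{3,\left\lceil\frac{\Delta}{2}\right\rceil\right\}$ for all $\Delta\geq3$. The paper instead handles $W_3$ by identifying it with $K_4$ and handles $W_4$ by an exhaustive case analysis of three candidate edge labelings, having only extracted $d(W_4)\geq\lceil\chi'(W_4)/2\rceil\geq2$ from its own lower-bound theorem; your use of the $\delta(G)$ half of that theorem makes the entire $W_4$ case analysis unnecessary, which is a genuine streamlining. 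Second, your rim ordering $1,m,m-1,\dots,2,-1,-m,\dots,-2$ (and its odd analogue) differs from the paper's labeling, but the verification goes through: the rim differences all lie in $\{1,3,m-1,m-2\}\subseteq\{1,\dots,m\}$ once $m\geq3$, the spokes realize all of $\{1,\dots,m\}$, and for $m=2$ the crossing difference $3$ inflates the range to exactly $\{1,2,3\}$, consistently with the lower bound of $3$. The only blemish is a harmless edge-case slip in your description of the odd wrap-around when $m=2$ (there is no label $-2$ there, and the wrap $-1\to1$ has difference $2$), but since the range is still $\{1,2,3\}$ and you defer $\Delta=3$ to $d(K_4)=3$ anyway, nothing is affected.
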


\begin{proof}
Since $W_3$ is isomorphic to the complete graph $K_4$, by Theorem~\ref{thm:dicomplete}, we have $d(W_3)=d(K_4)=4-1=3$. Next, we will show that $d(W_4)\geq3$. First, $d(W_4)\geq\left\lceil\frac{\chi'(W_4)}{2}\right\rceil\geq\left\lceil\frac{4}{2}\right\rceil=2$ by Theorem~\ref{thm:dilowerbnd}. Assume for the sake of contradiction that $f$ is a difference index labeling of $W_4$, where the image of $f^-$ is the set $\{\alpha,\beta\}$ and $0<\alpha<\beta$. By Corollary~\ref{cor:di0vertex}, we may further assume that $f(v_0)=0$. Without loss of generality, the only three candidates for $f^-$ are given by Figures~\ref{fig:diwheel1}, \ref{fig:diwheel2}, and \ref{fig:diwheel3}.
\begin{figure}[H]
\centering
\begin{minipage}{0.45\linewidth}
\centering
\begin{tikzpicture}
\coordinate(v0)at(0,0);\coordinate(v1)at(2,0);\coordinate(v2)at(0,2);\coordinate(v3)at(-2,0);\coordinate(v4)at(0,-2);\foreach\i in{v0,v1,v2,v3,v4}{\filldraw(\i)circle(0.1);}\draw(v3)--(v1)--(v2)--(v3)--(v4)--(v1);\draw(v2)--(v4);\node[below right]at(v0){$0$};\node[above right]at(v1){$\alpha$};\node[above right]at(v2){$\beta$};\node[above left]at(v3){$-\alpha$};\node[below right]at(v4){$-\beta$};\node[above]at(1,0){$\alpha$};\node[left]at(0,1){$\beta$};\node[above]at(-1,0){$\alpha$};\node[left]at(0,-1){$\beta$};\node[above right]at(1,1){$\beta-\alpha$};\node[above left]at(-1,1){$\alpha+\beta$};\node[below left]at(-1,-1){$\beta-\alpha$};\node[below right]at(1,-1){$\alpha+\beta$};
\end{tikzpicture}
\caption{Candidate $1$ for $f^-$ on $W_4$}
\label{fig:diwheel1}
\end{minipage}\qquad
\begin{minipage}{0.45\linewidth}
\centering
\begin{tikzpicture}
\coordinate(v0)at(0,0);\coordinate(v1)at(2,0);\coordinate(v2)at(0,2);\coordinate(v3)at(-2,0);\coordinate(v4)at(0,-2);\foreach\i in{v0,v1,v2,v3,v4}{\filldraw(\i)circle(0.1);}\draw(v3)--(v1)--(v2)--(v3)--(v4)--(v1);\draw(v2)--(v4);\node[below right]at(v0){$0$};\node[above right]at(v1){$\alpha$};\node[above right]at(v2){$-\alpha$};\node[above left]at(v3){$\beta$};\node[below right]at(v4){$-\beta$};\node[above]at(1,0){$\alpha$};\node[left]at(0,1){$\alpha$};\node[above]at(-1,0){$\beta$};\node[left]at(0,-1){$\beta$};\node[above right]at(1,1){$2\alpha$};\node[above left]at(-1,1){$\alpha+\beta$};\node[below left]at(-1,-1){$2\beta$};\node[below right]at(1,-1){$\alpha+\beta$};
\end{tikzpicture}
\caption{Candidate $2$ for $f^-$ on $W_4$}
\label{fig:diwheel2}
\end{minipage}\qquad
\begin{minipage}{0.45\linewidth}
\centering
\begin{tikzpicture}
\coordinate(v0)at(0,0);\coordinate(v1)at(2,0);\coordinate(v2)at(0,2);\coordinate(v3)at(-2,0);\coordinate(v4)at(0,-2);\foreach\i in{v0,v1,v2,v3,v4}{\filldraw(\i)circle(0.1);}\draw(v3)--(v1)--(v2)--(v3)--(v4)--(v1);\draw(v2)--(v4);\node[below right]at(v0){$0$};\node[above right]at(v1){$\alpha$};\node[above right]at(v2){$-\alpha$};\node[above left]at(v3){$-\beta$};\node[below right]at(v4){$\beta$};\node[above]at(1,0){$\alpha$};\node[left]at(0,1){$\alpha$};\node[above]at(-1,0){$\beta$};\node[left]at(0,-1){$\beta$};\node[above right]at(1,1){$2\alpha$};\node[above left]at(-1,1){$\beta-\alpha$};\node[below left]at(-1,-1){$2\beta$};\node[below right]at(1,-1){$\beta-\alpha$};
\end{tikzpicture}
\caption{Candidate $3$ for $f^-$ on $W_4$}
\label{fig:diwheel3}
\end{minipage}
\end{figure}
\noindent Since the sets $\{\alpha,\beta,\alpha+\beta\}$ and $\{\alpha,\beta,2\beta\}$ are of cardinality $3$, we see that $|f^-|\geq3$ in each of these figures, which leads to a contradiction. Hence, $d(W_4)\geq3$. Furthermore, letting $\alpha=1$ and $\beta=2$ in Figure~\ref{fig:diwheel1} completes the proof that $d(W_4)=3$.

When $\Delta\geq5$, by Theorem~\ref{thm:dilowerbnd} again, $d(W_\Delta)\geq\left\lceil\frac{\chi'(W_\Delta)}{2}\right\rceil\geq\left\lceil\frac{\Delta}{2}\right\rceil$. It remains to find a vertex labeling $f$ such that $|f^-|=\left\lceil\frac{\Delta}{2}\right\rceil$. Let $f$ be the vertex labeling on $W_\Delta$ such that $f(v_0)=0$, and for all $1\leq i\leq\Delta$,
$$f(v_i)=\begin{cases}
i&\text{if }1\leq i\leq\left\lceil\frac{\Delta}{2}\right\rceil-2,\\
\left\lceil\frac{\Delta}{2}\right\rceil&\text{if }i=\left\lceil\frac{\Delta}{2}\right\rceil-1,\\
\left\lceil\frac{\Delta}{2}\right\rceil-1&\text{if }i=\left\lceil\frac{\Delta}{2}\right\rceil,\\
\left\lceil\frac{\Delta}{2}\right\rceil-i& \text{if }\left\lceil\frac{\Delta}{2}\right\rceil+1\leq i\leq2\left\lceil\frac{\Delta}{2}\right\rceil-2,\\
-\left\lfloor\frac{\Delta}{2}\right\rfloor&\text{if }i=2\left\lceil\frac{\Delta}{2}\right\rceil-1,\\
1-\left\lfloor\frac{\Delta}{2}\right\rfloor&\text{if }i=2\left\lceil\frac{\Delta}{2}\right\rceil.\\
\end{cases}$$
Figures~\ref{fig:diwheel6} and \ref{fig:diwheel7} illustrate the vertex labelings $f$ on $W_6$ and $W_7$, respectively.
\begin{figure}[H]
\centering
\begin{minipage}{0.45\linewidth}
\centering
\begin{tikzpicture}
\coordinate(v0)at(0,0);\coordinate(v1)at({2*cos(0*360/6)},{2*sin(0*360/6)});\coordinate(v2)at({2*cos(1*360/6)},{2*sin(1*360/6)});\coordinate(v3)at({2*cos(2*360/6)},{2*sin(2*360/6)});\coordinate(v4)at({2*cos(3*360/6)},{2*sin(3*360/6)});\coordinate(v5)at({2*cos(4*360/6)},{2*sin(4*360/6)});\coordinate(v6)at({2*cos(5*360/6)},{2*sin(5*360/6)});\foreach\i in{v0,v1,v2,v3,v4,v5,v6}{\filldraw(\i)circle(0.1);}\draw(v1)--(v2)--(v3)--(v4)--(v5)--(v6)--(v1)--(v4);\draw(v2)--(v5);\draw(v3)--(v6);\node[below right]at(0.1,0.1){$0$};\node[above right]at(v1){$1$};\node[above right]at(v2){$3$};\node[above left]at(v3){$2$};\node[above left]at(v4){$-1$};\node[below left]at(v5){$-3$};\node[below right]at(v6){$-2$};
\end{tikzpicture}
\caption{Vertex labeling $f$ on $W_6$}
\label{fig:diwheel6}
\end{minipage}\qquad
\begin{minipage}{0.45\linewidth}
\centering
\begin{tikzpicture}
\coordinate(v0)at(0,0);\coordinate(v1)at({2*cos(0*360/7)},{2*sin(0*360/7)});\coordinate(v2)at({2*cos(1*360/7)},{2*sin(1*360/7)});\coordinate(v3)at({2*cos(2*360/7)},{2*sin(2*360/7)});\coordinate(v4)at({2*cos(3*360/7)},{2*sin(3*360/7)});\coordinate(v5)at({2*cos(4*360/7)},{2*sin(4*360/7)});\coordinate(v6)at({2*cos(5*360/7)},{2*sin(5*360/7)});\coordinate(v7)at({2*cos(6*360/7)},{2*sin(6*360/7)});\foreach\i in{v0,v1,v2,v3,v4,v5,v6,v7}{\filldraw(\i)circle(0.1);}\draw(v1)--(v2)--(v3)--(v4)--(v5)--(v6)--(v7)--(v1)--(v0)--(v2);\draw(v3)--(v0)--(v4);\draw(v5)--(v0)--(v6);\draw(v0)--(v7);\node[below right]at(0.1,0.1){$0$};\node[above right]at(v1){$1$};\node[above right]at(v2){$2$};\node[above left]at(v3){$4$};\node[above left]at(v4){$3$};\node[below left]at(v5){$-1$};\node[below right]at(v6){$-2$};\node[below right]at(v7){$-3$};
\end{tikzpicture}
\caption{Vertex labeling $f$ on $W_7$}
\label{fig:diwheel7}
\end{minipage}
\end{figure}

It is not difficult to check that the image of $f-$ is $\left\{1,2,\dotsc,\left\lceil\frac{\Delta}{2}\right\rceil\right\}$, which completes the proof that $d(W_\Delta)=\left\lceil\frac{\Delta}{2}\right\rceil$ when $\Delta\geq5$.
\end{proof}

\begin{theorem}\label{thm:digrid}
Let $G$ be a rectangular grid.  Then $d(G)=2$.
\end{theorem}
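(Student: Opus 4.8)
The plan is to pin $d(G)$ from below by the minimum degree and from above by an explicit monotone labeling. For the lower bound, observe that every rectangular grid $L_{n\times m}$ with $m\ge 2$ has a corner vertex of degree $2$ (for instance $v_{0,0}$, which is adjacent only to $v_{0,1}$ and $v_{1,0}$), so $\delta(G)=2$. Theorem~\ref{thm:dilowerbnd} then immediately gives $d(G)\ge\delta(G)=2$. Equivalently, since $G$ contains the $4$-cycle $v_{0,0}v_{0,1}v_{1,1}v_{1,0}v_{0,0}$, it is not a disjoint union of paths, so Corollary~\ref{cor:dipath} rules out $d(G)=1$.

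For the upper bound, I would exhibit a single vertex labeling that uses only two difference values, regardless of whether $G$ is a ladder or a larger grid. Define $f(v_{i,j})=mi+j$ for all $0\le i\le n-1$ and $0\le j\le m-1$. Viewing $mi+j$ as the base-$m$ expansion of an integer (equivalently, noting that $f$ is a bijection onto $\{0,1,\dotsc,nm-1\}$), we see that $f$ is injective and hence a vertex labeling. The two kinds of edges are then handled separately: for a horizontal edge $v_{i,j}v_{i,j+1}$ we have $f^-(v_{i,j}v_{i,j+1})=|(mi+j)-(mi+j+1)|=1$, while for a vertical edge $v_{i,j}v_{i+1,j}$ we have $f^-(v_{i,j}v_{i+1,j})=|(mi+j)-(m(i+1)+j)|=m$. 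Thus $f^-(E(G))=\{1,m\}$, and since $m\ge 2$ forces $1\ne m$, we obtain $|f^-|=2$ and therefore $d(G)\le 2$. Combining the two bounds yields $d(G)=2$.

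The crucial point---and what makes this argument so much shorter than the corresponding sum-index result in Theorem~\ref{thm:grids}---is that absolute differences do not accumulate the way sums do. A plain ``reading-order'' labeling already confines every edge label to $\{1,m\}$, so no alternating-sign trick is needed and no case split on the size of $m$ is required. The only things demanding a moment's care are the injectivity of $f$ (immediate from the base-$m$ interpretation) and the inequality $1\ne m$ (guaranteed by the hypothesis $m\ge 2$ in the definition of a rectangular grid); neither presents a genuine obstacle.
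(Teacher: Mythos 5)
Your proof is correct and uses the same labeling $f(v_{i,j})=mi+j$ as the paper for the upper bound. The only difference is in the lower bound: the paper splits into cases ($n=m=2$, where $G\cong C_4$, versus $n\geq 3$, where it invokes the $\left\lceil\chi'(G)/2\right\rceil$ part of Theorem~\ref{thm:dilowerbnd}), whereas you use the $\delta(G)=2$ part of the same theorem, which needs no case split and is, if anything, slightly cleaner.
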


\begin{proof}
Let $G=L_{n\times m}$. If $n=m=2$, then $G$ is isomorphic to the cycle $C_4$. By Theorem~\ref{thm:dicycle}, we have $d(G)=d(C_4)=2$. Otherwise, if $n\geq3$, then $\Delta(G)\geq3$. By Theorem~\ref{thm:dilowerbnd}, $d(G)\geq\left\lceil\frac{\chi'(G)}{2}\right\rceil\geq\left\lceil\frac{\Delta(G)}{2}\right\rceil\geq\left\lceil\frac{3}{2}\right\rceil=2$. Hence, it suffices to find a vertex labeling $f$ such that $|f^-|=2$.

For all $0\leq i\leq n-1$ and $0\leq j\leq m-1$, define $f(v_{i,j})=mi+j$. Viewing $mi+j$ as the base $m$ expansion of a positive integer, we see that $f$ is injective. Furthermore, $|f^-|=2$ since
$$|f(v_{i,j})-f(v_{i+1,j})|=|mi+j-(m(i+1)+j)|=m$$
and
$$|f(v_{i,j})-f(v_{i,j+1})|=|mi+j-(mi+j+1)|=1.$$
Figure~\ref{fig:digrid} illustrates the vertex labeling $f$ on $L_{6\times3}$.
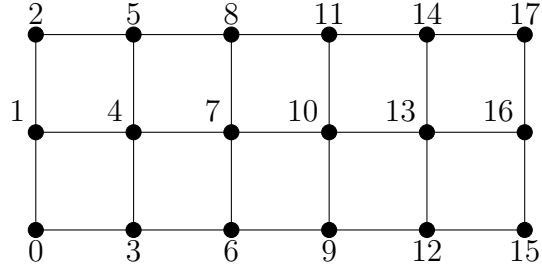
\begin{figure}[H]
\centering
\begin{tikzpicture}[scale=1.3]
\foreach\i in{0,1,2,3,4,5}{\foreach\j in{0,1,2}{\filldraw(\i,\j)circle(0.1/1.3);}}\draw(0,0)grid(5,2);\node[below]at(0,0){$0$};\node[below]at(1,0){$3$};\node[below]at(2,0){$6$};\node[below]at(3,0){$9$};\node[below]at(4,0){$12$};\node[below]at(5,0){$15$};\node[above left]at(0,1){$1$};\node[above left]at(1,1){$4$};\node[above left]at(2,1){$7$};\node[above left]at(3,1){$10$};\node[above left]at(4,1){$13$};\node[above left]at(5,1){$16$};\node[above]at(0,2){$2$};\node[above]at(1,2){$5$};\node[above]at(2,2){$8$};\node[above]at(3,2){$11$};\node[above]at(4,2){$14$};\node[above]at(5,2){$17$};
\end{tikzpicture}
\caption{Vertex labeling $f$ on $L_{6\times3}$}
\label{fig:digrid}
\end{figure}
\end{proof}

Since a prism graph $\Pi_n$ is a ladder graph $L_{n\times 2}$ with the two additional edges $v_{0,0}v_{n-1,0}$ and $v_{0,1}v_{n-1,1}$, the following corollary follows immediately from Theorem~\ref{thm:digrid} by using the same vertex labeling on ladder graphs.

\begin{corollary}\label{cor:diprism}
For every prism graph $\Pi_n$, $d(\Pi_n)\leq3$.
\end{corollary}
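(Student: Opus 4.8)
The plan is to reuse the ladder-graph labeling from the proof of Theorem~\ref{thm:digrid} and merely account for the two extra wrap-around edges. Since $\Pi_n$ is $L_{n\times2}$ together with the edges $v_{0,0}v_{n-1,0}$ and $v_{0,1}v_{n-1,1}$, I would define $f:V(\Pi_n)\to\Z$ by $f(v_{i,j})=2i+j$ for all $0\leq i\leq n-1$ and $0\leq j\leq1$. This is exactly the map used in Theorem~\ref{thm:digrid} with $m=2$, so its injectivity is already established there by viewing $2i+j$ as a base-$2$ expansion of an integer.

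Next I would observe that the labels assigned by $f^-$ to the edges of the underlying ladder are unchanged from Theorem~\ref{thm:digrid}: each rung $v_{i,0}v_{i,1}$ satisfies $f^-(v_{i,0}v_{i,1})=|2i-(2i+1)|=1$, and each rail edge $v_{i,j}v_{i+1,j}$ satisfies $f^-(v_{i,j}v_{i+1,j})=|2i+j-(2(i+1)+j)|=2$. It therefore only remains to compute $f^-$ on the two additional edges, where $f^-(v_{0,0}v_{n-1,0})=|0-2(n-1)|=2n-2$ and $f^-(v_{0,1}v_{n-1,1})=|1-(2(n-1)+1)|=2n-2$.

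The one point worth flagging — which is also the only thing that genuinely needs checking — is that both wrap-around edges receive the \emph{same} value $2n-2$, so they contribute a single new label rather than two; this is forced by the symmetry of the two rails under $f$. Consequently $f^-(E(\Pi_n))\subseteq\{1,2,2n-2\}$, a set of cardinality at most $3$ (it collapses to $\{1,2\}$ exactly when $n=2$, in which case the two closing edges coincide with rungs and $\Pi_n\cong C_4$). Hence $d(\Pi_n)\leq|f^-|\leq3$. There is no substantive obstacle in this argument: once the ladder labeling is transported to the prism, the corollary is immediate from the coincidence of the two closing-edge labels.
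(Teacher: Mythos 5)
Your proposal is correct and is exactly the argument the paper intends: Corollary~\ref{cor:diprism} is stated to follow ``immediately from Theorem~\ref{thm:digrid} by using the same vertex labeling on ladder graphs,'' and you have simply carried out the computation, correctly noting that both wrap-around edges receive the common label $2n-2$, so that $f^-(E(\Pi_n))\subseteq\{1,2,2n-2\}$. No gap; your write-up just makes explicit what the paper leaves to the reader.
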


\subsection{The difference index of trees}

In Subsection~\ref{subsec:sumtrees}, we saw that every tree of sum index at most $k$ is isomorphic to a subgraph of the hyperdiamond $H_k$.  We will provide a similar treatment for the difference index in this subsection. This time, the role of the universal graph is played by the infinite $k$-dimensional rectangular grid.

\begin{definition}
Let $k$ be a positive integer. Let $\e_1,\e_2,\dotsc,\e_k$ be the standard basis vectors of $\Z^k$. The \emph{infinite $k$-dimensional rectangular grid} $Q_k$ is the Cayley graph of $\Z^k$ with generating set $S=\{\pm\e_1,\pm\e_2,\dotsc,\pm\e_k\}$. In other words, the vertex set of $Q_k$ is $\Z^k$, and for any $\x,\x'\in\Z^k$, there is a directed edge from $\x$ to $\x'$ if and only if $\x-\x'\in S$. Since $S$ is closed under inverses, the Cayley graph $Q_k$ is an undirected graph.
\end{definition}

From the definition, it is obvious that for all $\x,\x'\in \Z^k$, the distance between $\x$ and $\x'$ is given by $\lVert\x-\x'\rVert$. The following two theorems are analogous to Theorems~\ref{thm:sumsubgraph} and \ref{thm:treeembed}.

\begin{theorem}
Let $G$ be a finite subgraph of $Q_k$. Then $d(G)\leq k$.
\end{theorem}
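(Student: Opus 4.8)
The plan is to imitate the proof of Theorem~\ref{thm:sumsubgraph}, using the coordinate structure of $Q_k$ in place of the hyperdiamond. Since $G$ is a finite subgraph of $Q_k$, each vertex of $G$ is a lattice point $\x=(x_1,x_2,\dotsc,x_k)\in\Z^k$, and every edge of $G$ joins two vertices $\x,\x'$ with $\x-\x'=\pm\e_i$ for some $1\leq i\leq k$. The idea is to encode each lattice point as a single integer via a mixed-radix (positional) expansion with a base large enough that moving along coordinate direction $i$ changes the label by exactly $\pm$ one fixed power of the base.

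First I would set $r=\max\{\lVert\x\rVert:\x\in V(G)\}$, which is finite because $G$ is finite, so that every coordinate satisfies $|x_i|\leq r$. Then define $f:V(G)\to\Z$ by
$$f(\x)=x_1+(2r+1)x_2+(2r+1)^2x_3+\dotsb+(2r+1)^{k-1}x_k.$$
Viewing this as a balanced base-$(2r+1)$ expansion whose digits lie in $[-r,r]$, the representation is unique, so $f$ is injective exactly as in Theorem~\ref{thm:sumsubgraph}; thus $f$ is a valid vertex labeling.

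Next I would evaluate $f^-$ on an arbitrary edge. If $\x\x'\in E(G)$ with $\x-\x'=\pm\e_i$, then all coordinates of $\x$ and $\x'$ agree except the $i$-th, which differs by $\pm1$, so every term of $f(\x)-f(\x')$ cancels except the $i$-th, giving $f(\x)-f(\x')=\pm(2r+1)^{i-1}$ and hence $f^-(\x\x')=(2r+1)^{i-1}$. Therefore
$$f^-(E(G))\subseteq\{1,2r+1,(2r+1)^2,\dotsc,(2r+1)^{k-1}\},$$
a set of cardinality $k$, so $d(G)\leq|f^-|\leq k$.

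The construction is essentially identical to the sum-index case, so I do not anticipate any serious obstacle. The only points requiring care are choosing the base to be $2r+1$ rather than something smaller, so that the balanced expansion is injective even though coordinates take both signs, and checking that a single-coordinate step isolates exactly one power of the base; both are immediate from the finiteness of $G$ and the definition of adjacency in $Q_k$.
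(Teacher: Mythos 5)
Your proposal is correct and matches the paper's own proof essentially verbatim: the same choice of $r$, the same base-$(2r+1)$ labeling $f(\x)=x_1+(2r+1)x_2+\dotsb+(2r+1)^{k-1}x_k$, the same injectivity argument, and the same computation showing $f^-(\x\x')=(2r+1)^{i-1}$ on each edge. No gaps.
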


\begin{proof}
Let $r=\max\{\lVert\x\rVert:\x\in V(G)\}$. Define $f:V(G)\to\Z$ such that for all $\x=(x_1,x_2,\dotsc,x_k)\in V(G)$, $f(\x)=x_1+(2r+1)x_2+\dotsb+(2r+1)^{k-1}x_k$. Since $|x_i|\leq r$ for all $1\leq i\leq k$, by viewing $f(\x)$ as a base $2r+1$ expansion of an integer, it is easy to see that $f$ is injective. It remains to show that $|f^-|\leq k$.

Consider $\x\x'\in E(G)$, where $\x'=(x_1',x_2',\dotsc,x_k')$ and
$$\x=\pm\e_i+\x'=(x_1',x_2',\dotsc,\pm1+x_i',\dotsc,x_k')$$
for some $1\leq i\leq k$. Then
\begin{align*}
f^-(\x\x')&=|f(\x)-f(\x')|\\
&=\big|\big(x_1'+(2r+1)(x_2')+\dotsb+(2r+1)^{i-1}(\pm1+x_i')+\dotsb+(2r+1)^{k-1}(x_k')\big)\\
&\hspace{20pt}-\big(x_1'+(2r+1)x_2'+\dotsb+(2r+1)^{k-1}x_k'\big)\big|\\
&=(2r+1)^{i-1}.
\end{align*}
Therefore, $f^-(E(G))\subseteq\{1,2r+1,(2r+1)^2,\dotsc,(2r+1)^{k-1}\}$.
\end{proof}

\begin{theorem}\label{thm:ditreeembed}
If $G$ is a tree and $d(G)\leq k$, then $G$ is isomorphic to a subgraph of $Q_k$.
\end{theorem}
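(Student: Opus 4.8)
The plan is to mirror the proof of Theorem~\ref{thm:treeembed}, but exploiting the abelian structure of $Q_k=\Z^k$ to obtain a substantially cleaner argument. Since $d(G)\leq k$, fix a vertex labeling $f$ of $G$ with $f^-(E(G))\subseteq\{\alpha_1,\alpha_2,\dotsc,\alpha_k\}$, where $\alpha_1,\dotsc,\alpha_k$ are distinct \emph{positive} integers; positivity is automatic because $f$ is injective, so adjacent vertices receive distinct labels. By Corollary~\ref{cor:di0vertex}, I may fix a root vertex $v_0\in V(G)$ with $f(v_0)=0$. Color the edges by defining $\mathcal{J}:E(G)\to\{1,2,\dotsc,k\}$ so that $\mathcal{J}(uv)=i$ exactly when $f^-(uv)=\alpha_i$. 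The goal is to build an injective graph homomorphism $\Phi:V(G)\to V(Q_k)$, which exhibits $G$ as isomorphic to the subgraph $\Phi(G)$ of $Q_k$.

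First I would define $\Phi$ by recording, along each edge, a signed generator of $Q_k$. For an oriented edge $u\to v$ the quantity $\big(f(v)-f(u)\big)/\alpha_{\mathcal{J}(uv)}$ lies in $\{+1,-1\}$, so the displacement $\big(f(v)-f(u)\big)/\alpha_{\mathcal{J}(uv)}\cdot\e_{\mathcal{J}(uv)}$ equals $\pm\e_{\mathcal{J}(uv)}$, a generator of $Q_k$. Because $G$ is a tree, every vertex $w$ is joined to $v_0$ by a unique path $v_0=u_0,u_1,\dotsc,u_r=w$, and I set $\Phi(w)=\sum_{\ell=1}^r\big(f(u_\ell)-f(u_{\ell-1})\big)/\alpha_{\mathcal{J}(u_{\ell-1}u_\ell)}\cdot\e_{\mathcal{J}(u_{\ell-1}u_\ell)}$. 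The tree hypothesis is used exactly here, to guarantee $\Phi$ is well-defined, just as in Theorem~\ref{thm:treeembed}. Checking that $\Phi$ is a graph homomorphism is then immediate: if $u$ and $w$ are adjacent with $w$ one step farther from $v_0$, the path to $w$ extends the path to $u$ by the single edge $uw$, so $\Phi(w)-\Phi(u)=\pm\e_{\mathcal{J}(uw)}\in S$, whence $\Phi(u)$ and $\Phi(w)$ are adjacent in $Q_k$.

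The key step is injectivity, and here the additive structure of $Q_k$ gives the argument essentially for free. I would introduce the linear functional $\mathcal{T}:\Z^k\to\Z$ defined by $\mathcal{T}(x_1,x_2,\dotsc,x_k)=\alpha_1 x_1+\alpha_2 x_2+\dotsb+\alpha_k x_k$. Since $\mathcal{T}$ is additive, the signed sum defining $\Phi$ telescopes: $\mathcal{T}(\Phi(w))=\sum_{\ell=1}^r\big(f(u_\ell)-f(u_{\ell-1})\big)=f(w)-f(v_0)=f(w)$. Thus $\mathcal{T}\circ\Phi=f$, and because $f$ is injective, $\Phi$ must be injective as well. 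An injective homomorphism embeds $G$ as a subgraph of $Q_k$, completing the proof.

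I do not anticipate a genuine obstacle. The intertwining relation $\mathcal{T}\circ\psi_i=\varphi_i\circ\mathcal{T}$ that drove the injectivity argument in Theorem~\ref{thm:treeembed} is replaced here by the triviality that $\mathcal{T}$ is a homomorphism of abelian groups, so the telescoping identity $\mathcal{T}\circ\Phi=f$ is the only computation required and injectivity follows at once. The sole care needed is the sign bookkeeping in the displacements, which is routine.
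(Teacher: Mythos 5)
Your proposal is correct and follows essentially the same route as the paper's proof: root the tree at a vertex labeled $0$, send each vertex to the signed sum of generators $\pm\e_i$ read off along its unique path from the root, and prove injectivity by composing with the linear functional $\mathcal{T}(x_1,\dotsc,x_k)=\sum_i\alpha_i x_i$, which telescopes to recover $f$. Your expression $\big(f(v)-f(u)\big)/\alpha_{\mathcal{J}(uv)}\cdot\e_{\mathcal{J}(uv)}$ is just an explicit formula for the paper's map $\mathcal{L}(f(v)-f(u))$, so the two arguments coincide.
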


\begin{proof}
Since $d(G)\leq k$, let $f$ be a vertex labeling of $G$ such that $f^-(E(G))\subseteq\{\alpha_1,\alpha_2,\dotsc,\alpha_k\}$. Let $\mathcal{L}:\{\pm\alpha_1,\pm\alpha_2,\dotsc,\pm\alpha_k\}\to\{\pm\e_1,\pm\e_2,\dotsc,\pm\e_k\}$ be such that $\mathcal{L}(\alpha_i)=\e_i$ and $\mathcal{L}(-\alpha_i)=-\e_i$ for all $1\leq i\leq k$.

Fix a vertex $v_0\in V(G)$. By Corollary~\ref{cor:di0vertex}, we may assume that $f(v_0)=0$. For any vertex $w\in V(G)$ of distance $r$ away from $v_0$, there exists a unique path $v_0v_1v_2\dotsb v_r$ from $v_0$ to $w$, where $v_r=w$. We define a map $\Lambda:V(G)\to V(Q_k)=\Z^k$ such that 
$$\Lambda(w)=\mathcal{L}(f(v_r)-f(v_{r-1}))+\mathcal{L}(f(v_{r-1})-f(v_{r-2}))+\dotsb+\mathcal{L}(f(v_1)-f(v_0)).$$

To verify that $\Lambda$ is a graph homomorphism, consider two adjacent vertices $u$ and $w$ in $V(G)$. Then their distances away from $v_0$ differ by $1$. Without loss of generality, let the paths from $v_0$ to $u$ and from $v_0$ to $w$ be $v_0v_1v_2\dotsb v_rv_{r+1}$ and $v_0v_1v_2\dotsb v_r$, respectively, where $v_{r+1}=u$ and $v_r=w$. Hence, $\Lambda(u)=\mathcal{L}(f(v_{r+1})-f(v_r))+\Lambda(w)$, or $\Lambda(u)-\Lambda(w)=\mathcal{L}(f(v_{r+1})-f(v_r))$. This shows that $\Lambda(u)$ and $\Lambda(w)$ are adjacent in $Q_k$, since $\mathcal{L}(f(v_{r+1})-f(v_r))$ is in the generating set of $Q_k$.

It remains to verify that $\Lambda$ is injective. We define a linear map $\mathcal{T}:\Z^k\to\Z$ such that
$$\mathcal{T}(x_1,x_2,\dotsc,x_k)=\alpha_1x_1+\alpha_2x_2+\dotsb\alpha_kx_k.$$
Note that for every edge $uv\in E(G)$, if $f(u)-f(v)=\alpha_i$ or $-\alpha_i$, then $\mathcal{L}(f(u)-f(v))=\e_i$ or $-\e_i$, respectively. As a result, $\mathcal{T}\big(\mathcal{L}(f(u)-f(v))\big)=f(u)-f(v)$.

Suppose $\Lambda(u)=\Lambda(w)$ for some vertices $u,w\in V(G)$. Let the paths from $v_0$ to $u$ and from $v_0$ to $w$ be $u_0u_1u_2\dotsb u_r$ and $w_0w_1w_2\dotsb w_s$, respectively, where $u_0=w_0=v_0$, $u_r=u$, and $w_s=w$. Then 
\begin{align*}
\mathcal{T}(\Lambda(u))&=\mathcal{T}\big(\mathcal{L}(f(u_r)-f(u_{r-1}))+\mathcal{L}(f(u_{r-1})-f(u_{r-2}))+\dotsb+\mathcal{L}(f(u_1)-f(u_0))\big)\\
&=\mathcal{T}\big(\mathcal{L}(f(u_r)-f(u_{r-1}))\big)+\mathcal{T}\big(\mathcal{L}(f(u_{r-1})-f(u_{r-2}))\big)+\dotsb+\mathcal{T}\big(\mathcal{L}(f(u_1)-f(u_0))\big)\\
&=(f(u_r)-f(u_{r-1}))+(f(u_{r-1})-f(u_{r-2}))+\dotsb+(f(u_1)-f(u_0))\\
&=f(u_r).
\end{align*}
Similarly, $\mathcal{T}(\Lambda(w))=f(w_s)$. Since $\mathcal{T}(\Lambda(u))=\mathcal{T}(\Lambda(w))$, we have $f(u_r)=f(w_s)$. By the injectivity of $f$, we conclude that $u_r=w_s$, i.e., $u=w$. Therefore, $\Lambda$ is injective, thus $G$ is isomorphic to a subgraph of $Q_k$, as desired.
\end{proof}

\begin{theorem}
Let $r$ be a positive integer. Then the number of vertices in $Q_k$ that are of distance $r$ away from a fixed vertex $\textup{\textbf{v}}\in V(Q_k)$ is $$\sum_{j = 1}^k\binom{k}{j}\binom{r-1}{j-1}2^j.$$
\end{theorem}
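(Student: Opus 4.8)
The plan is to exploit the explicit distance formula recorded just before the statement: since $Q_k$ is the Cayley graph of $\Z^k$ with generating set $\{\pm\e_1,\dotsc,\pm\e_k\}$, it is vertex-transitive and the distance between $\x$ and $\x'$ equals $\lVert\x-\x'\rVert$. Hence it suffices to take the fixed vertex to be the origin $\bz$ and count the number of lattice points $\x=(x_1,x_2,\dotsc,x_k)\in\Z^k$ satisfying $\lVert\x\rVert=|x_1|+|x_2|+\dotsb+|x_k|=r$.

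First I would stratify this set according to the number $j$ of nonzero coordinates of $\x$. Since $r\geq1$, at least one coordinate is nonzero, so $j$ ranges over $1,2,\dotsc,k$. For a fixed value of $j$, I would count in three independent stages: choose the subset of $j$ coordinates that are to be nonzero, in $\binom{k}{j}$ ways; choose a sign for each of these $j$ coordinates, in $2^j$ ways; and choose the absolute values of these coordinates, which must be positive integers summing to $r$. These three choices are genuinely independent, since the support, the signs on the support, and the magnitudes on the support can be prescribed separately.

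The count in the last stage is the number of compositions of $r$ into exactly $j$ positive parts, which by the standard stars-and-bars argument equals $\binom{r-1}{j-1}$. Multiplying the three factors gives $\binom{k}{j}\binom{r-1}{j-1}2^j$ points with exactly $j$ nonzero coordinates, and summing over $j=1,\dotsc,k$ yields the claimed total.

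There is no real obstacle here; this is a clean counting argument. The only points meriting a line of care are the reduction via vertex-transitivity and the $\ell^1$ distance formula (both already available from the paragraph preceding the theorem), and the boundary case $j=0$, which contributes nothing precisely because $r\geq1$ forces some coordinate to be nonzero and so the sum legitimately starts at $j=1$.
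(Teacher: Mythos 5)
Your proposal is correct and follows essentially the same route as the paper: reduce to the origin by vertex-transitivity, stratify the lattice points with $\lVert\x\rVert=r$ by the number $j$ of nonzero coordinates, and count each stratum as $\binom{k}{j}\cdot\binom{r-1}{j-1}\cdot 2^j$ (the paper merely folds the sign and magnitude choices into a single count of nonzero integer solutions). No gaps.
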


\begin{proof}
Since $Q_k$ is vertex transitive, we may assume that $\textup{\textbf{v}}=(0,0,\dotsc,0)\in V(Q_k)$. Let $\mathcal{V}$ be the set of vertices in $Q_k$ that are of distance $r$ away from $\textup{\textbf{v}}$. Then
$$\mathcal{V}=\{(x_1,x_2,\dotsc,x_k)\in\Z^k:|x_1|+|x_2|+\dotsb+|x_k|=r\}.$$
To count the number of elements in $\mathcal{V}$, we first partition $\mathcal{V}$ into $\mathcal{V}_1,\mathcal{V}_2,\dotsc,\mathcal{V}_k$, where
$$\mathcal{V}_j=\{\x=(x_1,x_2,\dotsc,x_k)\in\mathcal{V}:\text{the number of nonzero entries in $\x$ is }j\}.$$
For each fixed $j\in\{1,2,\dotsc,k\}$, there are $\binom{k}{j}$ ways to partition $\{1,2,\dotsc,k\}$ into two subsets $S_1$ and $S_2$ such that $|S_1|=j$ and $|S_2|=k-j$. For each such partition $S_1$ and $S_2$, let
$$\mathcal{V}_{S_1}=\{\x\in\mathcal{V}_j:x_i\neq0\text{ if and only if }i\in S_1\}.$$
It is easy to see that the number of elements in $\mathcal{V}_{S_1}$ is the same as the number of nonzero integer solutions $(x_1',x_2',\dotsc,x_j')$ to the equation
$$\sum_{i=1}^j|x_i'|=r.$$
The number of nonzero solutions to this equation is $\binom{r-1}{j-1}2^j$. Therefore, the cardinality of $\mathcal{V}_j$ is $\binom{k}{j}\binom{r-1}{j-1}2^j$, thus proving the theorem by summing over $j\in\{1,2,\dotsc,k\}$.
\end{proof}

Similarly to Corollary~\ref{cor:tree}, we provide without proof a lower bound for the difference index of a tree.

\begin{corollary}
Let $G$ be a tree. Then $d(G)\geq k$, where $k$ is the minimum positive integer such that for every vertex $v\in V(G)$ and positive integer $r$, the number of vertices in $G$ that are of distance $r$ away from $v$ is at most
$$\sum_{j=1}^k \binom{k}{j}\binom{r-1}{j-1}2^j.$$
\end{corollary}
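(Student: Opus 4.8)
The plan is to prove this as the difference-index counterpart of Corollary~\ref{cor:tree}, combining the tree-embedding result of Theorem~\ref{thm:ditreeembed} with the sphere-counting formula for $Q_k$ proved in the preceding theorem. I would recast the claim as a contrapositive in a packaged form: setting $m=d(G)$, I would show that $m$ itself satisfies the inequality that defines $k$. Since $k$ is the minimum positive integer with that property, this yields $k\le m=d(G)$, which is exactly $d(G)\ge k$.

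Carrying this out, since $d(G)\le m$, Theorem~\ref{thm:ditreeembed} provides an isomorphism $\Lambda$ from $G$ onto a subgraph $G'=\Lambda(G)$ of $Q_m$. Fix an arbitrary vertex $v\in V(G)$ and a positive integer $r$; by Corollary~\ref{cor:di0vertex} and Lemma~\ref{lem:dishifting} I may arrange the underlying labeling so that $v$ serves as the base point. Because $\Lambda$ is a graph isomorphism onto $G'$, it restricts to a bijection from $\{w\in V(G):d_G(v,w)=r\}$ onto $\{\Lambda(w):d_{G'}(\Lambda(v),\Lambda(w))=r\}$. The key claim is that each such image in fact lies on the radius-$r$ sphere of $Q_m$ about $\Lambda(v)$. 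Granting this, the radius-$r$ sphere of $G$ about $v$ injects into the radius-$r$ sphere of $Q_m$ about $\Lambda(v)$, whose cardinality is
$$\sum_{j=1}^m\binom{m}{j}\binom{r-1}{j-1}2^j$$
by the preceding theorem. As $v$ and $r$ are arbitrary, $m$ satisfies the defining property of $k$, and the conclusion follows.

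The step I expect to be the main obstacle is precisely the claim that $d_{Q_m}(\Lambda(v),\Lambda(w))=r$, i.e.\ that $\Lambda$ sends the radius-$r$ sphere of $G$ into the radius-$r$ sphere of $Q_m$ rather than merely into the ball of radius $r$. What comes for free is only the weaker pair of facts that the unique $G$-path $v=w_0,w_1,\dots,w_r=w$ maps to a walk of length $r$, so $d_{Q_m}(\Lambda(v),\Lambda(w))\le r$, and that this distance has the same parity as $r$ since $Q_m$ is bipartite under the parity of the coordinate sum. The difficulty is that the map of Theorem~\ref{thm:ditreeembed} is only a subgraph embedding and need not be isometric: along a tree path the successive increments $\pm\e_i$ can cancel in some coordinate, folding the path and strictly shortening its lattice distance. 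Small instances (for example a spider with a length-two leg, together with an optimal two-value labeling) already exhibit such folding between non-basepoint vertices, so this point cannot simply be transcribed from the sum-index case and must be justified directly. I would attack it through the injectivity of $\Lambda$, equivalently of the labeling $f$ via the linear map $\mathcal{T}$ of Theorem~\ref{thm:ditreeembed}: one argues that a folded image would be forced onto an already-occupied inner sphere, and then tracks how the radius-$r$ vertices distribute over the lower shells—most cleanly by an induction on $r$ in which the inner spheres are shown to be saturated before any outer shell can absorb a folded image. Establishing this distribution statement is the genuine crux of the proof.
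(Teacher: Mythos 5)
You have correctly identified both the intended route (the paper states this corollary without proof, as a direct combination of Theorem~\ref{thm:ditreeembed} with the sphere count for $Q_k$) and the reason that route is not automatic: the map $\Lambda$ of Theorem~\ref{thm:ditreeembed} is only a subgraph embedding, and increments along a tree path can cancel, so a vertex at tree-distance $r$ from the base point need not land on the radius-$r$ sphere of $Q_m$. This folding really does occur even for optimal labelings. On the spider $S_{3,1,1}$ with center $v_0$, one leg $v_0a_1a_2a_3$, and two legs $v_0b_1$, $v_0c_1$, the labeling $f(v_0)=0$, $f(a_1)=1$, $f(a_2)=11$, $f(a_3)=10$, $f(b_1)=-1$, $f(c_1)=-10$ is injective with $f^-(E)=\{1,10\}$, so $|f^-|=2=d(S_{3,1,1})$; yet with $\alpha_1=1$, $\alpha_2=10$ one gets $\Lambda(a_3)=-\e_1+\e_2+\e_1=\e_2$, a point of the radius-$1$ sphere, although $a_3$ is at distance $3$ from $v_0$.

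However, your proposal does not close this gap, and the repair you sketch rests on a false premise. In the example above the folded vertex $a_3$ lands on a point of the inner sphere that is \emph{not} occupied by the image of any distance-$1$ vertex: $v_0$ has degree $3$, so only three of the four points of the radius-$1$ sphere are used by the distance-$1$ vertices, and $\Lambda(a_3)$ occupies the fourth. Thus inner spheres need not be saturated before folding occurs, and a folded image is not ``forced onto an already-occupied inner sphere.'' What is actually needed is a counting statement such as: the number of distance-$r$ vertices whose images fall in the ball of radius $r-2$ is at most the number of points of the radius-$r$ sphere not hit by images of distance-$r$ vertices (an injection from the folded vertices into the deficiency of the outer shell), and neither your induction nor anything in the paper supplies such an argument. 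To be fair, the paper's own justification is literally ``we provide without proof,'' so you have located a genuine weak point in the paper itself; but as a proof, your proposal stops exactly where the real work begins.
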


\section{Concluding remarks}

We showed in Section~\ref{sec:sumindex} that the exclusive sum number is an upper bound for the sum index of a graph.  In Theorem~\ref{thm:esnbound} we further showed that there exists a graph $G$ such that $s(G)<\epsilon(G)$.  Preliminary investigations on such graphs lead us to the following conjecture.

\begin{conjecture}
For all positive integers $N$, there exists a graph $G$ such that $\epsilon(G)-s(G)>N$.
\end{conjecture}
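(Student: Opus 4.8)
The plan is to exhibit an explicit family of graphs on which the exclusive sum number outgrows the sum index. The most promising candidate is the family of disjoint triangles $nK_3$ of Theorem~\ref{thm:disjointtriangles}, for which the sum index is already pinned down at $s(nK_3)=\Theta(n^{1/3})$. It therefore suffices to prove that the exclusive sum number grows strictly faster, say $\epsilon(nK_3)=\omega(n^{1/3})$; combined with $s(nK_3)=\Theta(n^{1/3})$ this gives $\epsilon(nK_3)-s(nK_3)\to\infty$, and hence the conjecture. First I would recast $\epsilon(nK_3)$ as a purely additive extremal problem: a $k$-ESL amounts to a set $W\subseteq\Z$ with $|W|=k$ together with $3n$ distinct integers partitioned into triples $T_1,\dots,T_n$ such that (i) each triple's three pairwise sums lie in $W$, and (ii) no sum of two elements from different triples lies in $W$. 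As in the proof of Theorem~\ref{thm:disjointtriangles}, condition (i) forces distinct triples to use distinct $3$-subsets of $W$, so $\binom{k}{3}\ge n$ already recovers the $n^{1/3}$ bound; the whole point is that the exclusivity condition (ii) should force $k$ strictly higher.

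The key step is the lower bound $\epsilon(nK_3)=\omega(n^{1/3})$. The tension I would exploit is the following: to keep $|W|$ near $n^{1/3}$ one must reuse each sum-value of $W$ across many triples, which forces the triples to be heavily interleaved in value; but interleaved triples produce cross-triple sums that are hard to keep out of $W$. Quantitatively, for each $w\in W$ the pairs summing to $w$ come from distinct triples, since the three sums within a triple are distinct, and condition (ii) imposes Sidon-type restrictions relating these pairs. The goal is to turn these restrictions into a counting or linear-algebraic bound showing that the vertex labels must form a near-Sidon set relative to $W$, which caps how many $3$-subsets can be simultaneously realized and pushes $k$ above $n^{1/3}$, plausibly to order $\sqrt n$. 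On the upper-bound side I would confirm the gap is genuine by building an explicit ESL from a Sidon-like family of $3$-subsets of $\{1,2,\dots,k\}$ chosen to annihilate all cross-sums, yielding a matching or near-matching construction.

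An independent route goes through the hyperdiamond machinery of Subsection~\ref{subsec:sumtrees}. For a tree $T$, Theorems~\ref{thm:sumsubgraph} and \ref{thm:treeembed} give $s(T)=\min\{k:T\text{ is a subgraph of }H_k\}$, while the Miller--Ryan--Ryj\'a\v{c}ek characterization gives $\epsilon(T)=\min\{k:T\text{ is an \emph{induced} subgraph of }H_k\}$. The separation between $s$ and $\epsilon$ is then exactly the cost of forbidding non-adjacent tree-vertices from landing on adjacent vertices of the host, the same phenomenon that produced the forced non-edge $v_3v_6$ in the proof of Theorem~\ref{thm:esnbound}. I would try to iterate that gadget: build trees $T_N$ that embed as subgraphs of a fixed-dimensional $H_k$ by deliberately collapsing many non-edges onto host-edges, while arranging $N$ independent such collapses that any induced embedding is forced to undo, each costing at least one extra dimension, so that $\epsilon(T_N)\ge k+N$.

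The main obstacle in either route is the lower bound on $\epsilon$. For the triangle family this is a genuine additive-combinatorics extremal problem: the naive counting only recovers the $n^{1/3}$ bound, and beating it requires controlling how the exclusivity constraint propagates globally across all $\binom{n}{2}$ cross-pairs rather than pair by pair. For the tree route the difficulty is the analogous global statement that no induced embedding into a low-dimensional hyperdiamond exists, which must be argued against \emph{all} embeddings and not merely the natural one arising from a sum index labeling. I expect the cleaner resolution to come from the triangle family, where the problem is concrete enough to attack with Sidon-set and incidence-counting techniques.
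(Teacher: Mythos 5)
This statement is presented in the paper as a \emph{conjecture}: the authors explicitly write that they ``do not know if $\epsilon(G)-s(G)$ could be arbitrarily large,'' and they offer no proof. So there is no argument in the paper to compare yours against, and the question is simply whether your proposal constitutes a proof. It does not. What you have written is a research program with the decisive step left open in both of your routes, and you say so yourself. In the triangle route, everything hinges on the lower bound $\epsilon(nK_3)=\omega(n^{1/3})$, which you describe as ``a genuine additive-combinatorics extremal problem'' to be attacked later; until that bound is proved, nothing follows, and it is not even clear the bound is true --- the exclusivity condition might well be satisfiable with $|W|=\Theta(n^{1/3})$ (for instance by a variant of the rapidly growing label sets $\{4^1,\dotsc,4^k\}$ used in Theorem~\ref{thm:disjointtriangles}, whose cross-triple sums are unlikely to collide with $W$), in which case the route collapses entirely. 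You give no evidence that the exclusivity constraint actually forces $k$ upward.

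The tree route has the same gap one level up. Your identification $s(T)=\min\{k:T\subseteq H_k\}$ and $\epsilon(T)=\min\{k:T\text{ induced }\subseteq H_k\}$ for trees is a correct reading of Theorems~\ref{thm:sumsubgraph} and \ref{thm:treeembed} together with the Miller--Ryan--Ryj\'a\v{c}ek characterization, and it is a reasonable reformulation of the conjecture for trees. But the claim that $N$ ``independent collapses'' each cost at least one extra dimension in any induced embedding is pure heuristic: you would need to rule out that a single additional coordinate resolves many collapses at once, and you would need to do this against \emph{every} induced embedding into $H_{k+N-1}$, not just the one induced by a sum index labeling. The one worked instance in the paper (the forced non-edge $v_3v_6$ in Theorem~\ref{thm:esnbound}) establishes only a gap of $1$ and does not obviously iterate. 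In short: the reformulations and candidate families are sensible starting points, but no part of the required lower bound on $\epsilon$ is actually established, so the conjecture remains unproved by this proposal.
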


Table~\ref{tbl:sumdiff} compares the sum index to the difference index for the various families of graphs studied in this paper.  
\begin{table}[H]
\centering
    \begin{tabular}{|l|c|c|}
    \hline  & Sum Index & Difference Index \\
    \hline Complete graphs ($K_n$, where $n\geq2$) & $2n-3$ & $n-1$ \\
    \hline Complete bipartite graphs ($K_{n,m}$) & $m+n-1$ & $\left\lceil\frac{m+n-1}{2}\right\rceil$ \\
    \hline Caterpillars & $\Delta$ & $\left\lceil\frac{\Delta}{2}\right\rceil$ \\
    \hline Cycles ($C_n$) & $3$ & $2$ \\
    \hline Spiders ($S_{\ell_1,\ell_2,\dotsc,\ell_\Delta}$) & $\Delta$ & $\left\lceil\frac{\Delta}{2}\right\rceil$ \\ 
    \hline Wheels ($W_3,W_4$) & $5$ & $3$ \\
    \hline Wheels ($W_\Delta$, where $\Delta\geq5$) & $\Delta$ & $\left\lceil\frac{\Delta}{2}\right\rceil$ \\
    \hline Rectangular grids ($L_{n\times m}$, where $3\leq n\leq m$) & $4$ & $2$ \\
    \hline Ladders ($L_{2\times m}$, where $m\geq2$) & $3$ & $2$ \\
    \hline
    \end{tabular}
    \caption{Comparing sum index and difference index}
    \label{tbl:sumdiff}
\end{table}
    
\noindent We note that in all cases of Table~\ref{tbl:sumdiff}, we have \begin{equation}\label{eqn:d(g)s(g)}
d(G)=\left\lceil\frac{s(G)}{2}\right\rceil,
\end{equation} and it is tempting to conjecture that \eqref{eqn:d(g)s(g)} holds for all nonempty simple graph $G$. In fact, it can be verified that \eqref{eqn:d(g)s(g)} is true for all nonempty simple graphs with up to $4$ vertices.  However, the following two examples show that equation \eqref{eqn:d(g)s(g)} does not always hold.

\begin{example}
Let $G$ be the graph shown in Figure~\ref{fig:pentagon}.
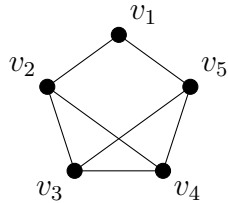
\begin{figure}[H]
\centering
\begin{tikzpicture}
\foreach\i in{0,1,2,3,4}{\filldraw({cos(72*\i+18)},{sin(72*\i+18)})circle(0.1);\draw({cos(72*\i+18)},{sin(72*\i+18)})--({cos(72*(\i+1)+18)},{sin(72*(\i+1)+18)});}\draw({cos(18)},{sin(18)})--({cos(234)},{sin(234)});\draw({cos(162)},{sin(162)})--({cos(306)},{sin(306)});\node[above right]at({cos(18)},{sin(18)}){$v_5$};\node[above right]at({cos(90)},{sin(90)}){$v_1$};\node[above left]at({cos(162)},{sin(162)}){$v_2$};\node[below left]at({cos(234)},{sin(234)}){$v_3$};\node[below right]at({cos(306)},{sin(306)}){$v_4$};
\end{tikzpicture}
\caption{Graph with $d(G)>\left\lceil\frac{s(G)}{2}\right\rceil$}\label{fig:pentagon}
\end{figure}
If $(f(v_1),f(v_2),f(v_3),f(v_4),f(v_5))=(1,5,2,3,4)$ and $g(v_1),g(v_2),g(v_3),g(v_4),g(v_5))=(1,2,4,5,3)$, then $|f^+|=4$ and $|g^-|=3$, thus $s(G)\leq4$ and $d(G)\leq3$. On the other hand, since the induced subgraph $H$ on $\{v_2,v_3,v_4,v_5\}$ is isomorphic to the complete graph $K_4$ with an edge removed, and $s(K_4)=2\times4-3=5$, we deduce that $s(G)\geq s(H)\geq5-1=4$. Hence, $s(G)=4$. In the rest of this example, we are going to show that $d(G)>2$.

Assume for the sake of contradiction that $h$ is a difference index labeling of $G$, where the image set of $h^-$ is the set $\{\alpha,\beta\}$ and $0<\alpha<\beta$. Then we have either $h^-(v_2v_3)=h^-(v_3v_4)$ or $\{h^-(v_2v_3),h^-(v_3v_4)\}=\{\alpha,\beta\}$. By Corollary~\ref{cor:di0vertex}, we may assume that $h(v_3)=0$.

Case $1$: $h^-(v_2v_3)=h^-(v_3v_4)$. Since $h(v_3)=0$, we have either $\{h(v_2),h(v_4)\}=\{\alpha,-\alpha\}$ or $\{h(v_2),h(v_4)\}=\{\beta,-\beta\}$. If $\{h(v_2),h(v_4)\}=\{\beta,-\beta\}$, then $h^-(v_2v_4)=2\beta>\beta>\alpha$, contradicting that the image of $h^-$ is $\{\alpha,\beta\}$. Hence, $\{h(v_2),h(v_4)\}=\{\alpha,-\alpha\}$, and without loss of generality, we may assume that $h(v_2)=\alpha=-h(v_4)$. Moreover, $h^-(v_2v_4)=2\alpha$, thus $\beta=2\alpha$. Since $h(v_3)=0$, $|h(v_5)|=h^-(v_3v_5)\in\{\alpha,\beta\}$. However, $h(v_5)\notin\{h(v_2),h(v_4)\}=\{\alpha,-\alpha\}$, so $h(v_5)\in\{\beta,-\beta\}=\{2\alpha,-2\alpha\}$, and it is easy to see that $h(v_5)=-2\alpha$. If $h(v_1)<-\alpha$, then $h^-(v_1v_2)=h(v_2)-h(v_1)>2\alpha$; if $-\alpha<h(v_1)<0$, then $\alpha<h^-(v_1v_2)=h(v_2)-h(v_1)<2\alpha$; if $h(v_1)>0$, then $h^-(v_1v_5)=h(v_1)-h(v_5)>2\alpha$. Therefore, $h(v_1)\in\{-\alpha,0\}$, contradicting that $h$ is injective.

Case $2$: $\{h^-(v_2v_3),h^-(v_3v_4)\}=\{\alpha,\beta\}$. Since $h(v_3)=0$, we have either $h^-(v_2v_4)=\beta-\alpha$ or $h^-(v_2v_4)=\alpha+\beta$. Note that $\alpha+\beta>\beta>\alpha$ and $h^-(v_2v_4)\in\{\alpha,\beta\}$, so $h^-(v_2v_4)\neq\alpha+\beta$. Hence, $h^-(v_2v_4)=\beta-\alpha$, which implies that $\beta=2\alpha$. Without loss of generality, we may assume that $\{h(v_2),h(v_4)\}=\{\alpha,\beta\}$. Since $h(v_3)=0$, $|h(v_5)|=h^-(v_3v_5)\in\{\alpha,\beta\}$. However, $h(v_5)\notin\{h(v_2),h(v_4)\}=\{\alpha,\beta\}$, so $h(v_5)\in\{-\alpha,-\beta\}$. If $h(v_4)=\beta$, then $h^-(v_4v_5)=h(v_4)-h(v_5)>\beta>\alpha$, contradicting that the image of $h^-$ is $\{\alpha,\beta\}$. Therefore, $h(v_4)=\alpha$, $h(v_2)=\beta=2\alpha$, and $h(v_5)=-\alpha$. If $h(v_1)<0$, then $h^-(v_1v_2)=h(v_2)-h(v_1)>2\alpha$; if $0<h(v_1)<\alpha$, then $\alpha<h^-(v_1v_2)=h(v_2)-h(v_1)<2\alpha$; if $h(v_1)>\alpha$, then $h^-(v_1v_5)=h(v_1)-h(v_5)>2\alpha$. Therefore, $h(v_1)\in\{0,\alpha\}$, contradicting that $h$ is injective.
\end{example}

\begin{example}
Let $G$ be the tree show in Figure~\ref{fig:tree}.
\begin{figure}[H]
\centering
\begin{tikzpicture}
\filldraw(0,0)circle(0.1);\foreach\i in{0,1,2,3}{\filldraw({cos(\i*90+45)},{sin(\i*90+45)})circle(0.1);\filldraw({2*cos(\i*90+45)},{2*sin(\i*90+45)})circle(0.1);\filldraw({cos(\i*90+45)+cos(\i*90)},{sin(\i*90+45)+sin(\i*90)})circle(0.1);\filldraw({cos(\i*90+45)+cos((\i+1)*90)},{sin(\i*90+45)+sin((\i+1)*90)})circle(0.1);
\draw(0,0)--({2*cos(\i*90+45)},{2*sin(\i*90+45)});\draw({cos(\i*90+45)+cos(\i*90)},{sin(\i*90+45)+sin(\i*90)})--({cos(\i*90+45)},{sin(\i*90+45)})--({cos(\i*90+45)+cos((\i+1)*90)},{sin(\i*90+45)+sin((\i+1)*90)});}
\end{tikzpicture}
\caption{}\label{fig:tree}
\end{figure}
Since the hyperdiamonds $H_3$ and $H_4$ are regular of degree $3$ and $4$, respectively, and both hyperdiamonds have girth $6$, it is easy to see that $G$ is isomorphic to a subgraph of $H_4$ but not $H_3$. By Theorems~\ref{thm:sumsubgraph} and \ref{thm:treeembed}, $s(G)=4$. On the other hand, it is easy to see that $G$ is not isomorphic to a subgraph of the rectangular grid $Q_2$, so $d(G)>2$ by Theorem~\ref{thm:ditreeembed}.
\end{example}

These observations together with Theorem~\ref{thm:dibipartite} lead us to the following conjecture.

\begin{conjecture}
For any nonempty simple graph $G$, $\left\lceil\frac{s(G)}{2}\right\rceil\leq d(G)\leq s(G)$.
\end{conjecture}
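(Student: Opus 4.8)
The plan is to split the argument according to whether $G$ is bipartite, since the bipartite case is already settled: Theorem~\ref{thm:dibipartite} gives exactly $\lceil s(G)/2\rceil\le d(G)\le s(G)$ for every bipartite $G$. The mechanism there is the sign-flip $g=\epsilon\cdot f$, where $\epsilon:V(G)\to\{+1,-1\}$ is a proper $2$-coloring: on every edge the endpoints get opposite signs, so $g^-(uv)=f(u)+f(v)=f^+(uv)$ and $g^+(uv)=\pm\bigl(f(u)-f(v)\bigr)=\pm f^-(uv)$, converting a sum labeling into a difference labeling and back with at most a factor-of-two loss. Hence the entire content of the conjecture lies in the non-bipartite case, where no global $2$-coloring exists, and I would isolate that case first.

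For the upper bound $d(G)\le s(G)$, I would start from a sum index labeling $f$ (shifted by Lemma~\ref{lem:shifting} so that $f>0$) and choose a vertex $2$-coloring $\epsilon$ whose \emph{frustrated} edges form a subgraph $F\subseteq G$ supported on odd cycles. Setting $g=\epsilon\cdot f$, every cut edge contributes $g^-(uv)=f^+(uv)\in f^+(E(G))$, so the cut edges already use at most $s(G)$ distinct difference values; the remaining task is to absorb the bad edges of $F$ into these same $s(G)$ classes. Symmetrically, for the lower bound $\lceil s(G)/2\rceil\le d(G)$ (equivalently $s(G)\le 2d(G)$), I would start from a difference index labeling $g$ (shifted via Lemma~\ref{lem:dishifting} so that $g>0$), flip signs to get $f=\epsilon\cdot g$ so that each cut edge satisfies $f^+(uv)=\pm g^-(uv)$ and hence lands in the $2d(G)$ values $\{\pm\beta_1,\dots,\pm\beta_{d(G)}\}$, and then control the bad edges of $F$ separately.

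The main obstacle is precisely this control of the frustrated edges. A scale-separation attempt — adding a large multiple $M\cdot c(v)$ of an auxiliary labeling designed to regularize $F$ — only produces estimates of the shape $d(G)\le s(G)+d(F)$ (additive rather than a maximum), which is too weak, while re-choosing labels so the differences across $F$ reuse existing values perturbs the sum and difference values on the cut edges, so the two requirements are in tension. I would therefore attack the odd-cycle obstruction structurally: either route through the bipartite double cover $\widetilde G$, which is bipartite and so satisfies the conjecture, and attempt to descend its labelings to $G$; or use an ear decomposition, adding odd ears one at a time while tracking the at most $2d(G)$ sum values and $s(G)$ difference values already present. Verifying that such a descent or induction genuinely reuses the existing label classes, rather than spawning new ones at each odd cycle, is the crux, and is exactly where I expect the argument — like the conjecture itself — to be hardest. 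The tightness of the lower bound for $K_n$, odd cycles, and wheels, all of which achieve $d(G)=\lceil s(G)/2\rceil$, suggests that any successful construction must, on these extremal graphs, reuse the $\pm\beta_i$ pattern with no waste, and I would use them as the primary test cases for whatever general scheme I build.
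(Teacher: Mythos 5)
This statement is posed in the paper as a \emph{conjecture}, not a theorem: the paper offers no proof of it, only supporting evidence (Theorem~\ref{thm:dibipartite} for the bipartite case, Table~\ref{tbl:sumdiff}, verification for all graphs on at most $4$ vertices, and two examples showing the lower bound can hold with equality failing in \eqref{eqn:d(g)s(g)}). Your proposal is therefore not comparable to a proof in the paper, and, more importantly, it is not itself a proof. You correctly identify that the bipartite case is exactly Theorem~\ref{thm:dibipartite} via the sign-flip $g=\epsilon\cdot f$, and that the entire content of the conjecture is the non-bipartite case. But you then concede that the crux --- absorbing the frustrated edges of any vertex $2$-coloring into the existing label classes --- is unresolved, and each of the three routes you sketch stalls precisely there. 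The scale-separation idea only yields additive bounds of the form $d(G)\leq s(G)+d(F)$, which you rightly reject as too weak. The bipartite double cover $\widetilde{G}$ does satisfy the inequality, but a labeling of $\widetilde{G}$ assigns two distinct labels to the two lifts of each vertex of $G$; selecting one lift per vertex is exactly the choice of a sign pattern, which reintroduces the frustrated edges you were trying to eliminate, so nothing descends for free. The ear-decomposition induction is stated only as an intention, with no mechanism for why adding an odd ear should not force a new sum or difference value.

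Note also that \emph{both} inequalities are open for non-bipartite graphs: even the seemingly softer upper bound $d(G)\leq s(G)$ has no known general argument, since the only tool converting a sum index labeling into a difference labeling (the sign flip) requires a proper $2$-coloring. What you have written is a accurate diagnosis of why the conjecture is hard, together with sensible test cases ($K_n$, odd cycles, wheels, and the two examples in the concluding remarks where $d(G)>\lceil s(G)/2\rceil$ would be worth adding), but it does not constitute a proof of the statement.
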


We provided only two conjectures in this section, but since the definitions of sum index and difference index are fairly new, there are many other exciting directions of study that the reader may develop on these topics.

\section{Acknowledgments}
These results are based on work supported by the National Science Foundation under grants numbered DMS-1852378 and DMS-1560019.

\end{document}